\def\tr{\mathop{\text{tr}}\kern.2ex}
\def\ind{{\mathds 1}}
\newcolumntype{L}[1]{>{\raggedright\let\newline\\\arraybackslash\hspace{0pt}}m{#1}}
\newcolumntype{C}[1]{>{  \centering\let\newline\\\arraybackslash\hspace{0pt}}m{#1}}
\newcolumntype{R}[1]{>{ \raggedleft\let\newline\\\arraybackslash\hspace{0pt}}m{#1}}
\newcolumntype{d}[1]{D{.}{.}{#1}}
\newcolumntype{H}{>{\setbox0=\hbox\bgroup}c<{\egroup}@{}}
\newcolumntype{Z}{>{\setbox0=\hbox\bgroup}c<{\egroup}@{\hspace*{-\tabcolsep}}}
\numberwithin{equation}{section}
\newtheorem{theorem}{Theorem}[section]
\newtheorem{lemma}{Lemma}[section]
\newtheorem{proposition}{Proposition}[section]
\newtheorem{assumption}{Assumption}[section]
\providecommand{\customgenericname}{}
\newcommand{\newcustomtheorem}[2]{%
  \newenvironment{#1}[1]
  {%
   \renewcommand\customgenericname{#2}%
   \renewcommand\theinnercustomgeneric{##1}%
   \innercustomgeneric
  }
  {\endinnercustomgeneric}
}
\theoremstyle{definition}
\newtheorem{remark}{Remark}[section]
\DeclareFontFamily{U}{mathx}{\hyphenchar\font45}
\DeclareFontShape{U}{mathx}{m}{n}{<-> mathx10}{}
\DeclareSymbolFont{mathx}{U}{mathx}{m}{n}
\DeclareMathAccent{\widebar}{0}{mathx}{"73}
\begin{document}

\setlength{\abovedisplayskip}{5pt}
\setlength{\belowdisplayskip}{5pt}
\setlength{\abovedisplayshortskip}{5pt}
\setlength{\belowdisplayshortskip}{5pt}
\hypersetup{colorlinks,breaklinks,urlcolor=blue,linkcolor=blue}

\title{\LARGE A sliced Wasserstein and diffusion approach to random coefficient models}

\author{
Keunwoo Lim\thanks{Department of Applied Mathematics, University of Washington, Seattle, WA 98195, USA. E-mail: \tt{kwlim@uw.edu}}, ~~Ting Ye\thanks{Department of Biostatistics, University of Washington, Seattle, WA 98195, USA. E-mail: \tt{tingye1@uw.edu}}, ~ and~ Fang Han\thanks{Department of Statistics, University of Washington, Seattle, WA 98195, USA; e-mail: {\tt fanghan@uw.edu}}}

\date{\today}

\maketitle


\begin{abstract} 
We propose a new minimum-distance estimator for linear random coefficient models. This estimator integrates the recently advanced sliced Wasserstein distance with the nearest neighbor methods, both of which enhance computational efficiency. We demonstrate that the proposed method is consistent in approximating the true distribution. Moreover, our formulation naturally leads to a diffusion process-based algorithm and is closely connected to treatment effect distribution estimation—both of which are of independent interest and hold promise for broader applications.
\end{abstract}

{\bf Keywords:} sliced Wasserstein distance, minimum-distance estimation, nearest neighbor methods, diffusion process.

\section{Introduction} \label{section: introduction}

Consider a linear random coefficient model (RCM) involving a set of random variables $(X, Y, \beta)$ with the underlying structure  
\begin{align}\label{eq:linear-RCM}
    Y = \langle \beta, X \rangle := \text{the inner product of }X \text{ and }\beta.
\end{align}  
Here the covariates $X$ and the random coefficients $\beta$ are both supported on $\mathbb{R}^d$. Throughout this paper, we assume that $X$ and $\beta$ are independent. This paper focuses on estimating the distribution of $\beta$ based solely on the observations $\{(X_i, Y_i); i \in [n]\}$, where $[n] := \{1, 2, \ldots, n\}$ and $\{(X_i, Y_i, \beta_i); i \in [n]\}$ are $n$ independent realizations of $(X, Y, \beta)$.  

Linear RCMs of the form \eqref{eq:linear-RCM} play a fundamental role in many quantitative studies; see, for instance, \cite{lewbel2017unobserved} and \cite{Bonhomme2024} for some recent reviews on the applications of RCMs in economics. Traditionally, research on RCMs has primarily focused on estimating the mean and variance of $\beta$ \citep{rubin1950note, hildreth1968some, swamy1970efficient}. While these two moments summarize average marginal effects and variances, they often fail to capture all information of interest to researchers.  

To address this limitation, Beran and Hall \citep{beran92} pioneered the study of nonparametrically identifying and estimating the distribution of $\beta$. Since then, various approaches have been proposed, including minimum-distance estimation \citep{Beran76,beran1993semiparametric}, series expansion methods for (conditional) density functions \citep{beran1996nonparametric, Hohmann2016, Dunker19, Dunker25, gaillac2021nonparametric, Gaillac22}, kernel methods \citep{Hoderlein2010, Holzmann20}, nearest neighbor (NN) methods \citep{holzmann2024multivariate}, and discrete-grid-based constrained least squares methods \citep{fox2011, fox2016simple, heiss2022nonparametric}.  

In this paper, we revisit the minimum-distance estimators proposed by Beran and Millar \citep{Beran76} and enhance their framework by incorporating nearest neighbor (NN) methods \citep{lin2023estimation,holzmann2024multivariate} and the sliced Wasserstein (SW) metric, the latter of which has recently gained prominence in generative artificial intelligence. Compared to generative modeling approaches based on the original Wasserstein metric \citep{arjovsky2017wasserstein, tolstikhin2017wasserstein, bousquet2017optimal}, the SW metric has been observed to provide similar empirical performance while being computationally more attractive and less sensitive to the dimension of the data \citep{kolouri2018sliced, liutkus2019sliced,fan2024minimum}. Computationally, optimization algorithms towards minimizing the SW metric have also been introduced and validated in \cite{bonneel2015sliced} and \cite{tanguy2024properties}, among many others. 

Building on these insights, this paper proposes a novel SW-based minimum-distance estimator for the distribution of $\beta$, where weights are adaptively chosen over an NN graph to reduce bias. To the best of our knowledge, this formulation is new. Computationally, following the framework of \cite{tanguy2024properties}, we develop block gradient descent algorithms that achieve efficient optimization with a time complexity that scales polynomially in $d$, whereas existing methods typically require an exponentially large number of grid points in $d$. We further establish theoretical guarantees for the proposed estimator and derive its rates of convergence. Lastly, drawing inspiration from parallel research on SW flows \citep{bonnotte2013unidimensional, liutkus2019sliced} and causal random coefficient models \citep{heckman1997making}, we bridge our formulation with diffusion processes and causal inference. For the former, we introduce diffusion process-based methods to approximate the distribution of $\beta$. For the latter, we incorporate  entropic regularization into the model of \cite{heckman1997making}, enhancing the identification and estimation of the treatment effect distribution.

\vspace{0.2cm}
\noindent {\bf Paper organization.} Section \ref{section: minimum-distance estimator}  presents the proposed minimum-distance estimator and Section \ref{section: algorithm} illustrates the (approximate) block coordinate decent algorithms for implementation. Section \ref{section: theory} presents the main theory concerning estimation and computation. Sections \ref{section: diffusion} and \ref{sec: causal} connect the studied problem to diffusion processes and casual inference, respectively. The empirical performance of the algorithms is reported in Section \ref{section: simulations}. Proofs of all theoretical results are relegated to the Appendix.

\vspace{0.2cm}
\noindent{\bf Notation.} Let  $(\mathbb{R}^{d}, \|\cdot\|_2)$ represent the $d$-dimensional real space coupled with the Euclidean metric. Write $\mathbb{S}^{d-1} = \{V \in \mathbb{R}^{d}\,\vert\, \Vert V \Vert_{2}=1\}$, $B_{r}(a) = \{ U\in \mathbb{R}^{d} \,\vert \,\Vert U - a\Vert_{2}< r \}$, and  $\widebar{B}_{r}(a)$ to represent the sphere, the radius-$r$ open ball, and the corresponding closed ball in $(\mathbb{R},\|\cdot\|_2)$, respectively. Let $\{e_{1},\dots, e_{d}\}$ be the standard bases in $\mathbb{R}^{d}$ with regard to the Euclidean inner product $\langle\cdot,\cdot\rangle$, and let $\sigma$ be the Haar measure in $\mathbb{S}^{d-1}$. For any vectors $v_1,\ldots,v_m\in\mathbb{R}^d$, write $(v_1,\ldots,v_m)=(v_1^\top,\ldots,v_m^\top)^\top$. For vectors $U, \tilde{U}\in \mathbb{R}^{d}$, we also use $P^{U}(\tilde{U}) = \langle U, \tilde{U}\rangle$ to represent their inner product. Let $\mathcal{P}(K)$ denote the set of all probability measures on $K$ and $\delta_{x}\in \mathcal{P}(K)$ be the Dirac measure on $x \in K$. Lastly, for arbitrary measure $\mu \in \mathcal{P}(\mathbb{R}^{d})$, we introduce its projected measure with respect to the direction $V\in\mathbb{S}^{d-1}$ as $ \mu^{V} = \mu \circ (P^{V})^{-1}$.

\section{Method}\label{section: minimum-distance estimator}

Consider $\{(X_i, Y_i, \beta_i); i \in [n]\}$ to be $n$ realizations of \eqref{eq:linear-RCM}. The objective of interest is to estimate the distribution of $\beta$, denoted as $\mu_\beta$, based solely on the observations $(X_i, Y_i)$'s.

This paper is concerned with minimum-distance estimation \citep{Wolfowitz53, Wolfowitz57}, which involves an objective functional and a domain of minimization. In our formulation, the objective functional is based on the Wasserstein $W_2$ distance, defined as  
\begin{align*}
    W_2(\mu, \tilde{\mu}) = \bigg( \inf_{\pi \in \Pi(\mu, \tilde{\mu})} \mathbb{E}_{(x, y) \sim \pi} \Vert x - y \Vert_2^2 \bigg)^{\frac{1}{2}},
\end{align*}
where $\mu$ and $\tilde{\mu}$ are two probability measures on $\mathbb{R}^d$, and $\Pi(\mu, \tilde{\mu})$ denotes the set of all couplings of $\mu$ and $\tilde{\mu}$, i.e., probability measures on $\mathbb{R}^d \times \mathbb{R}^d$ with marginal distributions $\mu$ and $\tilde{\mu}$, respectively.

For each direction $V \in \mathbb{S}^{d-1}$, we introduce a $k$-NN empirical measure $\widebar{\mu}^V_k$ for the objective functional. Specifically, for each $i \in [n]$, first let  
\[
    \tilde{X}_{i} = \frac{X_{i}}{\Vert X_{i} \Vert_{2}}, \quad 
    \tilde{Y}_{i} = \frac{Y_{i}}{\Vert X_{i} \Vert_{2}}
\]
be the {\it normalized} counterparts of $X_i$ and $Y_i$, respectively. Let $\ind(\cdot)$ represent the indicator function and 
\[
    \overline{S}_X(V, k) := \Big\{ j \in [n]: \sum_{i=1}^n \ind\big(\| \tilde{X}_i - V \|_2 < \| \tilde{X}_j - V \|_2 \big) < k \Big\}  
\]
denote the set of indices corresponding to the $k$-NNs of $V$ in $\{\tilde{X}_i; i \in [n]\}$. Since $X_i$ may take discrete values, ties can occur, causing the cardinality of $\overline{S}_X(V, k)$ to exceed $k$. In such cases, consistent with \cite{lin2024failure, lin2024consistency}, we select an {\it arbitrary} subset 
\[
    S_X(V, k) \subset \overline{S}_X(V, k) \quad \text{such that its cardinality}~ \big| S_X(V, k) \big| = k.  
\]
When $\tilde{X}_i$ is continuous, as discussed in Section~\ref{subsection: estimation}, we have $S_X(V,k) = \overline{S}_X(V,k)$.

Using this subset and noting that, for any direction $V \in \mathbb{S}^{d-1}$ and $i \in [n]$,
\begin{align}\label{eq:insight}
    \beta_i^\top V \approx \tilde{Y}_i \tilde{X}_i^\top V \quad \text{as } \tilde{X}_i \approx V,
\end{align}
we define the NN-induced random measure as  
\[
    \widebar{\mu}^V_k = \frac{1}{k} \sum_{i \in S_X(V, k)} \delta_{P^V(\tilde{Y}_i \tilde{X}_i)} 
    = \frac{1}{k} \sum_{i \in S_X(V, k)} \delta_{\tilde{Y}_i \tilde{X}_i^\top V}.
\]
In light of \eqref{eq:insight}, for arbitrary $V$ and all sufficiently small $k$, the laws of $\widebar{\mu}^V_k$ and $\beta^\top V$ should be close to each other.

Encouraged by the above insight, we introduce the following objective functional
\begin{align}\label{eq:fk}
    \mathcal{F}_k(\mu) = \int_{\mathbb{S}^{d-1}} W_2^2(\widebar{\mu}^V_k, \mu^V) \, \mathrm{d} \sigma(V),
\end{align}
which integrates the squared $W_2$ distance between the NN random measure $\widebar{\mu}^V_k$ and the projected measure $\mu^V$ over all directions $V \in \mathbb{S}^{d-1}$. 

Lastly, we propose the following {\it constrained} $k$-NN-based minimum-distance estimator
\begin{align}\label{eq:mde}
    \hat{\mu}_\beta \in \argmin_{\mu \in \mathcal{P}_N(\widebar{B}_R(0))} \mathcal{F}_k(\mu).
\end{align}
Here $R$ is a pre-specified radius to enforce a compact space, and the domain $\mathcal{P}_N(\widebar{B}_R(0))$ consists of all size-$N$ discrete measures in $\mathcal{P}(\widebar{B}_R(0))$, defined as  
\begin{align*}
    \mathcal{P}_N(\widebar{B}_R(0)) = \Big\{ \eta(w) \in \mathcal{P}(\widebar{B}_R(0)) : \eta(w) = \frac{1}{N} \sum_{i=1}^N \delta_{w_i} \text{ with } w_1, \ldots, w_N \in \widebar{B}_R(0)\\
    \text{and } w= (w_1, \ldots, w_N)\in \widebar{B}_R(0)^N \Big\}.
\end{align*}
The output is then a discrete approximation to any $\mu_\beta\in \mathcal{P}(\widebar{B}_R(0))$.

\begin{remark}
There are connections between \eqref{eq:fk} and the sliced Wasserstein $W_2$ distance. Notice that the sliced $W_2$ distance between any two probability measures, $\mu, \tilde{\mu} \in \mathcal{P}(\mathbb{R}^d)$, is defined as  
\begin{align*}
    SW_2(\mu, \tilde{\mu}) = \bigg( \int_{\mathbb{S}^{d-1}} W_2^2(\tilde\mu^V, \mu^V) \, \mathrm{d} \sigma(V) \bigg)^{\frac{1}{2}}.
\end{align*}
Thus, $\mathcal{F}_k(\cdot)$ can be interpreted as a $k$-NN-based revision of the original $SW_2$ distance, encouraged by the approximate equality in \eqref{eq:insight}. In particular, when $k = n$, $\mathcal{F}_k(\cdot)$ reduces to the $SW_2$ distance between the empirical measure and the target.  
\end{remark}

\section{Algorithm}\label{section: algorithm}

The computation of $\hat{\mu}_{\beta}$ in \eqref{eq:mde} reduces to finding a size-$N$ set of elements in $\widebar{B}_R(0)$:
\begin{align}\label{eq:points}
\hat{w} \in \eta^{-1}(\hat{\mu}_{\beta}).
\end{align}
Optimizing \eqref{eq:mde}  can thus also be interpreted as minimizing $\mathcal{F}_{k} \circ \eta$ over $\widebar{B}_{R}(0)^{N}$, which is identifiable up to a permutation. However, solving this minimization problem requires continuous integration over the Haar measure, which is computationally infeasible.

To address this issue, we adopt a common trick and propose a discrete Monte Carlo approximation to the original $\mathcal{F}_k$:
\begin{align}\label{eq:approximate-mde}
    \widebar{\mathcal{F}}_{k}(\mu) = \frac{1}{m} \sum_{i = 1}^{m} W_{2}^{2}(\widebar{\mu}_{k}^{V_{i}}, \mu^{V_{i}}),
\end{align}
where $V_{1}, \dots, V_{m}$ are sampled independently from the Haar measure $\sigma$, independent of the system. 

We accordingly introduce two algorithms for optimizing \eqref{eq:mde} using the Monte Carlo approximation \eqref{eq:approximate-mde}, namely, Algorithms \ref{algorithm: bcd} and \ref{algorithm: abcd}. Without loss of generality, in these two algorithms we set $N = k$ since otherwise, we could choose the larger of the two. In addition, in these algorithms, we use functions for {\it sorting, argsorting}, and {\it projecting} and define them as follows. For a real sequence $a = (a_{q})_{q\in[k]} = (a_{1}, \dots, a_{k})$, we define
\[
\text{sort}(a) = (a_{\nu(q)})_{q\in[k]} = (a_{\nu(1)}, \dots, a_{\nu(k)})
\]
and
\[
\text{argsort}(a) = (\nu(q))_{q\in[k]} = (\nu(1), \dots, \nu(k)),
\]
where $(\nu(1), \dots, \nu(k))$ is an arbitrary permutation of $[k]$ such that $a_{\nu(1)} \leq \cdots \leq a_{\nu(k)}$. Additionally, the projection of a vector $\psi \in \mathbb{R}^{kd}$ onto the set $\widebar{B}_{R}(0)^{k}$ in Euclidean space is defined as
\[
\text{Proj}_{\widebar{B}_{R}(0)^{k}}(\psi) = \argmin_{w \in \widebar{B}_{R}(0)^{k}} \Vert \psi - w \Vert_{2}.
\]

Algorithm \ref{algorithm: bcd} is a block coordinate descent method for computing $\hat{w}$. This algorithm is a revised version of \citet[Algorithm 1]{tanguy2024properties}, incorporating the $k$-NN graph into the computation. In each iteration, the local optimal transport law is updated using Lemma \ref{lemma: Bobkov} in the Appendix, leveraging the vector $\psi_{\ell}$ and the $k$-NN matrix $D$. The vector $\psi_{\ell+1}$ is obtained as the minimizer of the local optimal transport law within the domain $\widebar{B}_{R}(0)^{k}$. 

The computation of $\psi_{\ell+1}$, however, involves convex optimization over a constrained domain, which can be computationally expensive. Algorithm \ref{algorithm: abcd} addresses this computational challenge by introducing a projected gradient descent variant. In this algorithm, the optimal transport law from Algorithm \ref{algorithm: bcd} is approximated using an $L^{2}$-distance formulation. This enables the minimization problem to be solved using linear algebra, significantly reducing computational complexity. In addition, it is possible to check that the computational complexity is $O(mdn\log n+tmk \log k)$; see Appendix Section \ref{sec:cc-calculation} ahead for details. This is substantially faster than the existing grid-based algorithms that has computational complexity exponentially growing with the dimension.

\begin{algorithm}[!ht]
\caption{Block Coordinate Descent}\label{algorithm: bcd}
\LinesNotNumbered
  \KwInput{$(Y_{i}, \beta_{i}, X_{i})$, $V_{j}$, $R$, $i\in[n]$, $q\in[m]$}
  \KwInitialization{$\psi_{1} \sim \text{Uniform}\,(\widebar{B}_{R}(0)^{k})$, $\psi_{0}$, $\ell \leftarrow 1$, $D\in \mathbb{R}^{k \times m}$}
  \For{$j = 1$ to $m$}
  {Choose the $k$-NNs of $V_{j}$ as $\{\tilde{X}_{j(q)}; q\in[k]\}$ with ties broken arbitrarily\\
   $D_{\cdot,\, j} \leftarrow \text{sort}\,(\, \tilde{Y}_{j(q)}\,\langle\, V_{j},\,\tilde{X}_{j(q)}\,\rangle\,)$ for each $q\in[k]$
  }
  \While{$\Vert \psi_{\ell} - \psi_{\ell-1}\Vert_{2} > 0$}
  {
  \For{$j = 1$ to $m$}
  {$\nu_{j}\leftarrow \text{argsort}\,(\langle V_{j}, \psi_{\ell, 1}\rangle,\ldots,\langle V_{j}, \psi_{\ell, k}\rangle)$
  }
  $\psi_{\ell+1}\leftarrow\argmin_{w\in \widebar{B}_{R}(0)^{k}}\sum_{j = 1}^{m}\sum_{q = 1}^{k}(\langle V_{j}, w_{q}\rangle - D_{\nu_{j}^{-1}(q),\, j})^{2}$\\
  $\ell \leftarrow \ell+1$} 
 \KWReturn{$\psi_{\ell}$}
\end{algorithm}

\begin{algorithm}[!ht]
\caption{Approximate Block Coordinate Descent}
\label{algorithm: abcd}
\LinesNotNumbered
  \KwInput{$(Y_{i}, \beta_{i}, X_{i})$, $V_{j}$, $R$, $t$, $i\in[n]$, $q\in[m]$}
  \KwInitialization{$\psi_{1} \sim \text{Uniform}\,(\widebar{B}_{R}(0)^{k})$, $D\in \mathbb{R}^{k \times m}$}
  \For{$j = 1$ to $m$}
  {Choose the $k$-NNs of $V_{j}$ as $\{\tilde{X}_{j(q)}; q\in[k]\}$ with ties broken arbitrarily\\
   $D_{\cdot,\, j} \leftarrow \text{sort}\,(\, \tilde{Y}_{j(q)}\,\langle\, V_{j},\,\tilde{X}_{j(q)}\,\rangle\,)$ for each $q\in[k]$
  }
  \For{$\ell = 1$ to $t$}
  {\For{$j = 1$ to $m$}
  {$\nu_{j}\leftarrow \text{argsort}\,(\langle V_{j}, \psi_{\ell, 1}\rangle,\ldots,\langle V_{j}, \psi_{\ell, k}\rangle)$
  }
  \For{$q = 1$ to $k$}
  {$\psi_{\ell+1, q}\leftarrow m^{-1}d\sum_{j=1}^{m}D_{\nu_{j}^{-1}(q), \,j}V_{j}$
  }
$\psi_{\ell+1}\leftarrow \text{Proj}_{\widebar{B}_{R}(0)^{k}}(\psi_{\ell+1})$\\
  }
 \KWReturn{$\psi_{t+1}$}
\end{algorithm}

\section{Theory}\label{section: theory}

This section is divided into three parts. The first part discusses the assumptions, while the other two address the rates of convergence of the estimator and the convergence of the proposed algorithms.

\subsection{Assumptions}

We focus on the case $d \geq 2$, as the case $d=1$ is trivial. Define the normalized versions of $X$ and $Y$ as $\tilde{X} = X / \Vert X \Vert_{2}$ and $\tilde{Y} = Y / \Vert X \Vert_{2}$, respectively, such that $\tilde{X} \in \mathbb{S}^{d-1}$. This normalization preserves the inner product structure, as $\tilde{Y} = \langle \beta, \tilde{X} \rangle$.

The first assumption specifies the data-generating structure, which  implies that $\{(\tilde{X}_i, \tilde{Y}_i, \beta_i); i \in [n]\}$ are independent and identically distributed as $(\tilde{X}, \tilde{Y}, \beta)$. 

\begin{assumption}\label{assumption:1}
Assume $\{(X_i, Y_i, \beta_i); i \in [n]\}$ are $n$ independent realizations of $(X, Y, \beta)$, which satisfies the linear structure \eqref{eq:linear-RCM} and that $X$ is independent of $\beta$.
\end{assumption}

The next assumption imposes constraints on the distribution of $\beta$, ensuring that it has a compact support. This setting is similar to those made in \cite{Beran76}, \cite{Hoderlein2010}, and \cite{Holzmann20}, and also common in other nonparametric mixture model analyses \citep{Miao24,Han23,lim2024smoothed}.

\begin{assumption}\label{assumption: coefficient}
There exists a finite constant $R < \infty$ such that  $\mu_{\beta}$ is supported on $\widebar{B}_{R}(0)$.
\end{assumption}

The following two assumptions provide restrictions on the distribution of $\tilde{X}$.

\begin{assumption}\label{assumption: bounded covariates}
The law of $\tilde{X}$ is absolutely continuous with respect to the Haar measure $\sigma$ on $\mathbb{S}^{d-1}$. Furthermore, the corresponding Haar density, denoted by $f_{\tilde{X}}$, is lower bounded by some positive constant $\tau_{0} > 0$.
\end{assumption}

Assumption \ref{assumption: bounded covariates} is equivalent to Assumption 4 in \cite{Hoderlein2010}, which naturally holds when $X$ has a full-dimensional support. In particular, this assumption is valid when \eqref{eq:linear-RCM} does not include an intercept term, i.e., $X_{i,1} = 1$ for all $i \in [n]$. Such scenarios are plausible in many cases, including those discussed in \citet[Section 4.3]{Hoderlein2010} and \cite{lewbel2017unobserved}. However, it no longer holds if the model includes an intercept term. To address such cases, we introduce the following general assumption.

\begin{assumption}\label{assumption: unbounded covariates}
The law of $\tilde{X}$ is absolutely continuous with respect to the Haar measure $\sigma$ on $\mathbb{S}^{d-1}$. Additionally, there exist constants $\tau_{0}$, $\rho_{0}$, $C_{\tilde{X}}$, and $\alpha > 0$ such that for all $0 < \tau \leq \tau_{0}$ and $0 < \rho \leq \rho_{0}$, there exist closed sets $D(\tau)$, $L(\rho, \tau) \subset \mathbb{S}^{d-1}$ satisfying:
\begin{align*}
    \sigma\Big(\Big\{\, V \in \mathbb{S}^{d-1} \,\Big|\, f_{\tilde{X}}(V) \leq \tau \,\Big\} \setminus D(\tau) \Big) = 0, \quad
    L(\rho, \tau) = \Big\{\, V \in \mathbb{S}^{d-1} \,\Big|\, B_{\rho}(V) \cap D(\tau) = \emptyset \,\Big\},
\end{align*}
and 
\[
\sup_{\tilde{V} \in \mathbb{S}^{d-1} \setminus L(\rho, \tau)} \min_{V \in L(\rho, \tau)} \Vert V - \tilde{V} \Vert_{2} \leq C_{\tilde{X}} \tau^{\alpha} + \rho.
\]    
\end{assumption}

Assumption~\ref{assumption: unbounded covariates} is designed to be weak, although at the expense of some technical complexity. To aid understanding, we present the following result, which simplifies Assumption~\ref{assumption: unbounded covariates} to a tail condition on $X$.

\begin{proposition}\label{prop:heavy-tail}
Suppose there exist constants $C_{f} > 0$ and $\kappa > 1$ such that $X = (1, X_{2:d})$ and the density $f_{X_{2:d}}$ of $X_{2:d}$ satisfies:
\[
f_{X_{2:d}}(T) \geq C_{f}(1 + \Vert T \Vert_{2})^{-\kappa}, 
\]
for all $T \in \mathbb{R}^{d-1}$. Assumption \ref{assumption: unbounded covariates} then holds with $\alpha = 1/(\kappa + 1)$, $\tau_{0} = \mathcal{S}(\mathbb{S}^{d-1})C_{f}/2^{3\kappa + 3}$, and $\rho_{0} = 1/4$, where $\mathcal{S}(\mathbb{S}^{d-1})$ is a surface area of $\mathbb{S}^{d-1}$.
\end{proposition}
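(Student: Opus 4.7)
The plan is to exhibit the closed sets $D(\tau)$ and $L(\rho,\tau)$ required by Assumption \ref{assumption: unbounded covariates} as explicit spherical bands adjacent to the equator, and then verify the three conditions by direct calculation. I would first compute $f_{\tilde X}$ explicitly: because $X=(1,X_{2:d})$, the vector $\tilde X=X/\Vert X\Vert_2$ is supported on the open upper hemisphere $\{V\in\mathbb{S}^{d-1}:v_1>0\}$, which I parametrize by $v_{2:d}$ with $v_1=\sqrt{1-\Vert v_{2:d}\Vert_2^2}$ and Haar surface measure $\mathrm{d}\sigma = v_1^{-1}\,\mathrm{d}v_{2:d}$. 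The inverse of the gnomonic map $T\mapsto(1,T)/\sqrt{1+\Vert T\Vert_2^2}$ is $T=v_{2:d}/v_1$ with Jacobian determinant $v_1^{-(d+1)}$, and a change of variables yields, on the upper hemisphere,
\[
f_{\tilde X}(V) = f_{X_{2:d}}(v_{2:d}/v_1)\,v_1^{-d}.
\]

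Second, using the tail hypothesis together with the elementary bound $v_1+\Vert v_{2:d}\Vert_2\leq\sqrt{2(v_1^2+\Vert v_{2:d}\Vert_2^2)}=\sqrt{2}$ valid on $\mathbb{S}^{d-1}$,
\[
f_{\tilde X}(V)\geq \frac{C_f\,v_1^\kappa}{(v_1+\Vert v_{2:d}\Vert_2)^\kappa\,v_1^d}.
\]
Inverting this inequality, and using $\Vert v_{2:d}\Vert_2\leq 1$, I would derive an inclusion $\{V:f_{\tilde X}(V)\leq\tau\}\cap\{v_1>0\}\subseteq\{v_1\leq c_1\,\tau^{1/(\kappa+1)}\}$ for an explicit $c_1=c_1(C_f,\kappa)$ and all $\tau\leq\tau_0=C_f/2^{3\kappa+3}$. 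The exponent $1/(\kappa+1)$, rather than the tighter but dimension-dependent $1/(\kappa-d)$ one would get from the crude bound $f_{\tilde X}\gtrsim v_1^{\kappa-d}$, is the dimension-free choice convenient for the downstream rate analysis.

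Third, I would set $D(\tau):=\{V\in\mathbb{S}^{d-1}:v_1\leq c_1\tau^{1/(\kappa+1)}\}$ and $L(\rho,\tau):=\{V:B_\rho(V)\cap D(\tau)=\emptyset\}$; both are closed by construction. Since $f_{\tilde X}\equiv 0$ on the lower hemisphere (which lies in $D(\tau)$), the inclusion $\sigma(\{f_{\tilde X}\leq\tau\}\setminus D(\tau))=0$ is immediate from the previous step, and the Lipschitz estimate $|v_1-v_1'|\leq\Vert V-V'\Vert_2$ for $V,V'\in\mathbb{S}^{d-1}$ gives $\{v_1>c_1\tau^{1/(\kappa+1)}+\rho\}\subseteq L(\rho,\tau)$ together with an approximate converse to the same order.

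Finally, for the distance inequality, given any $\tilde V\notin L(\rho,\tau)$ I would produce $V\in L(\rho,\tau)$ by moving $\tilde V$ along the great-circle arc in the plane spanned by $\tilde V$ and $e_1$ until $v_1$ reaches $c_1\tau^{1/(\kappa+1)}+\rho$; the restriction $\rho\leq\rho_0=1/4$ keeps this arc in a regime where Euclidean and arc-length distances are comparable, so that its length is bounded by $C_{\tilde X}\tau^{1/(\kappa+1)}+\rho$ for an absolute $C_{\tilde X}$. The main obstacle is the bookkeeping around the density-inversion step: pinning down the exact exponent $1/(\kappa+1)$ and the stated thresholds $\tau_0=C_f/2^{3\kappa+3}$ and $\rho_0=1/4$ requires a careful case split according to whether $v_1$ is comparable to $\Vert v_{2:d}\Vert_2$ and tracking several $2^\kappa$-type factors, together with a careful treatment of $\tilde V$ deep in the lower hemisphere (where $\tilde V\in D(\tau)$ but the nearest point of $L(\rho,\tau)$ can a priori be $\Theta(1)$ away), which must be handled via the essential-support reading of Assumption \ref{assumption: unbounded covariates}.
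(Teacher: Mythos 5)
Your geometric setup — parametrize via the gnomonic map $T\mapsto(1,T)/\sqrt{1+\Vert T\Vert_2^2}$, relate $f_{\tilde X}$ to $f_{X_{2:d}}$ through the Jacobian, and lower bound the Haar density on a band $\{v_1>c\tau^\alpha\}$ — tracks the paper's Steps 1--3 closely, including the formula $\det J_{H^{-1}}=v_1^{-(d+1)}$ (the paper bounds this below by $1$ via Minkowski's determinant theorem, which is why it lands on the weaker exponent $1/(\kappa+1)$ rather than the $1/\kappa$ your exact bookkeeping would produce; your ``the $1/(\kappa-d)$ one would get from the crude bound'' comment is not quite right, since $v_1^{-d}\geq 1$ always helps rather than hurts). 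So far so good.

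The genuine gap is exactly the one you flag at the end, and it is not a technicality you can paper over with an ``essential-support reading.'' With $X=(1,X_{2:d})$, the normalized covariate $\tilde X$ is supported only on the open upper hemisphere, so $f_{\tilde X}\equiv 0$ there and $\{f_{\tilde X}\leq\tau\}$ contains the entire lower hemisphere. The constraint $\sigma(\{f_{\tilde X}\leq\tau\}\setminus D(\tau))=0$ then forces your one-sided slab $D(\tau)=\{v_1\leq c_1\tau^{\alpha}\}$, which makes $L(\rho,\tau)$ a single cap near $e_1$. For $\tilde V=-e_1$ the quantity $\min_{V\in L(\rho,\tau)}\Vert V-\tilde V\Vert_2$ is close to $2$, and Assumption~\ref{assumption: unbounded covariates} takes an unqualified supremum over $\mathbb{S}^{d-1}\setminus L(\rho,\tau)$: the inequality $\leq C_{\tilde X}\tau^\alpha+\rho$ is simply false for small $\tau,\rho$. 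There is no essential-support loophole because the assumption is a statement about the sets $D(\tau)$ and $L(\rho,\tau)$ as subsets of the sphere, not about the support of the measure.

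The idea you are missing is the paper's opening move: introduce the symmetrized variable $\widebar X:=(-1)^\delta\tilde X$ with $\delta\sim\mathrm{Bern}(1/2)$ independent of $X$ (which leaves the quantities $\tilde Y_i\tilde X_i$ that actually enter the estimator unchanged, since $\widebar Y\,\widebar X=\tilde Y\tilde X$). The law of $\widebar X$ is antipodally symmetric with positive density on a dense subset of $\mathbb{S}^{d-1}$, so the low-density set is the equatorial band $D(\tau)=\{|\langle V,e_1\rangle|\leq C_{\widebar X}\tau^\alpha\}$, $L(\rho,\tau)$ becomes a pair of caps around $\pm e_1$, and every $\tilde V\notin L(\rho,\tau)$ satisfies $|\tilde v_1|<C_{\widebar X}\tau^\alpha+\rho$ and can reach the nearer cap within distance $O(C_{\widebar X}\tau^\alpha+\rho)$. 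This is what makes the distance condition, and hence the proposition, actually hold. Your proof as sketched does not go through without this symmetrization; inserting it (and carrying the factor-of-two from the Bernoulli split through Steps 2--3) recovers the paper's argument.
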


\begin{remark}
Proposition \ref{prop:heavy-tail} refines Assumption 1 in \cite{Holzmann20} and similar assumptions in \cite{Dunker19} and \cite{Dunker25} (see, e.g., the discussions in \citet[Section 4.3]{Dunker25}) by accommodating a broader range of tail conditions. In the literature, such tail conditions are often introduced for technical convenience, as we also do here. Notably, in an interesting recent paper, \cite{Gaillac22} examined some cases where $X$ is even allowed to have a compact support, albeit at the cost of imposing certain supersmoothness conditions on the density of $\beta$.
\end{remark}



\subsection{Estimation}\label{subsection: estimation}

Our first main theoretical result addresses the landscape of the objective function and the existence of minimizers to \eqref{eq:mde}.

\begin{theorem}\label{proposition: convex}
Under Assumption \ref{assumption: coefficient}, the random function $\mathcal{F}_{k} \circ \eta$ over the domain $\widebar{B}_{R}(0)^{N}$ is $4RN^{-1/2}$-Lipschitz continuous. Additionally, with probability one, there exist global minimizer(s) for \eqref{eq:mde}, which is unique (up to a permutation) if $N = k = n$.
\end{theorem}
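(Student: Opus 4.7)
The plan is to dispatch the Lipschitz bound first and then read off existence and uniqueness from it. The key observation is that both the target $\widebar{\mu}_{k}^{V}$ and any candidate $\eta(w)^{V}$ are supported inside $[-R,R]$: indeed, $\lvert \tilde{Y}_{i} \tilde{X}_{i}^{\top} V\rvert \leq \lvert\tilde{Y}_{i}\rvert = \lvert\langle \beta_{i}, \tilde{X}_{i}\rangle\rvert \leq \Vert\beta_{i}\Vert_{2} \leq R$ under Assumption \ref{assumption: coefficient}, while $\lvert\langle V, w_{i}\rangle\rvert \leq \Vert w_{i}\Vert_{2} \leq R$ for $w_{i} \in \widebar{B}_{R}(0)$. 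In particular, $W_{2}(\widebar{\mu}_{k}^{V}, \eta(w)^{V}) \leq 2R$ uniformly in $V$ and $w$. To compare $\mathcal{F}_{k}(\eta(w))$ and $\mathcal{F}_{k}(\eta(\tilde{w}))$, I would exploit the identity coupling between $\eta(w)^{V}$ and $\eta(\tilde{w})^{V}$ together with Cauchy--Schwarz to obtain $W_{2}(\eta(w)^{V}, \eta(\tilde{w})^{V}) \leq N^{-1/2}\Vert w - \tilde{w}\Vert_{2}$ uniformly in $V$. The algebraic identity $\lvert a^{2}-b^{2}\rvert = (a+b)\lvert a-b\rvert$ applied to $a = W_{2}(\widebar{\mu}_{k}^{V}, \eta(w)^{V})$ and $b = W_{2}(\widebar{\mu}_{k}^{V}, \eta(\tilde{w})^{V})$, combined with the reverse triangle inequality $\lvert a-b\rvert \leq W_{2}(\eta(w)^{V}, \eta(\tilde{w})^{V})$ and the crude bound $a+b \leq 4R$, then gives $\lvert W_{2}^{2}(\widebar{\mu}_{k}^{V}, \eta(w)^{V}) - W_{2}^{2}(\widebar{\mu}_{k}^{V}, \eta(\tilde{w})^{V})\rvert \leq 4RN^{-1/2}\Vert w - \tilde{w}\Vert_{2}$. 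Integrating over $V$ against the Haar measure delivers the claimed Lipschitz estimate.

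Existence is then immediate: $\mathcal{F}_{k} \circ \eta$ is continuous on the compact set $\widebar{B}_{R}(0)^{N}$, so the Weierstrass extreme value theorem guarantees at least one global minimizer (in fact deterministically). For uniqueness when $N = k = n$, tie-breaking is vacuous because $S_{X}(V, n) = [n]$ for every $V$, hence $\widebar{\mu}_{n}^{V}$ is exactly the $V$-projection of the $\mathbb{R}^{d}$-empirical measure $\widebar{\mu}_{n} := n^{-1}\sum_{i=1}^{n} \delta_{\tilde{Y}_{i} \tilde{X}_{i}}$. Since $\Vert \tilde{Y}_{i} \tilde{X}_{i}\Vert_{2} \leq R$, this $\widebar{\mu}_{n}$ lies in $\mathcal{P}_{n}(\widebar{B}_{R}(0))$ and trivially achieves $\mathcal{F}_{n}(\widebar{\mu}_{n}) = 0$. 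Any minimizer $\hat{\mu}$ must therefore zero the integrand for $\sigma$-almost every $V$, giving $\hat{\mu}^{V} = \widebar{\mu}_{n}^{V}$ on a full-measure set of directions; injectivity of the Radon transform (equivalently, Cram\'er--Wold via characteristic functions) then forces $\hat{\mu} = \widebar{\mu}_{n}$ as measures on $\mathbb{R}^{d}$, so the fiber $\eta^{-1}(\widebar{\mu}_{n}) \subset \widebar{B}_{R}(0)^{n}$ consists precisely of the permutations of $(\tilde{Y}_{1}\tilde{X}_{1}, \ldots, \tilde{Y}_{n}\tilde{X}_{n})$.

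The principal bookkeeping obstacle is assembling the three inequalities in the Lipschitz step so that the universal $2R$ bound on the one-dimensional projections and the $N^{-1/2}$ dependence from the identity coupling combine cleanly under the integral over $\sigma$; once in place, existence and uniqueness follow almost mechanically from compactness and the metric property of the sliced Wasserstein distance.
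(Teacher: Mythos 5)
Your proof is correct and follows essentially the same route as the paper's: the Lipschitz step combines the $|a^{2}-b^{2}|\le (a+b)|a-b|$ factorization, the reverse triangle inequality for $W_{2}$, and the identity coupling $W_{2}(\eta(w)^{V},\eta(\tilde w)^{V})\le N^{-1/2}\|w-\tilde w\|_{2}$, and existence then follows from continuity on the compact cube. Where you go beyond the paper is the uniqueness claim for $N=k=n$: the paper merely asserts that $n^{-1}\sum_{i}\delta_{\tilde Y_{i}\tilde X_{i}}$ is the unique minimizer, whereas you supply the missing argument — that this empirical measure zeroes $\mathcal{F}_{n}$, that any other zero must agree with it on $\sigma$-a.e.\ projection, and that injectivity of the Radon transform (Cram\'er--Wold) then forces equality as measures on $\mathbb{R}^{d}$, with the fiber $\eta^{-1}$ consisting exactly of permutations. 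This is a genuine and welcome elaboration of a step the paper leaves implicit, and it is correct.
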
  

The next theorem establishes the rates of convergence of $\hat\mu_\beta$ to $\mu_\beta$ under the $SW_2$ distance. The first part pertains to the case under Assumption \ref{assumption: bounded covariates}, and the second part addresses the case under Assumption \ref{assumption: unbounded covariates}.

\begin{theorem}\label{theorem: bounded estimator}
Assume Assumptions \ref{assumption:1} and \ref{assumption: coefficient}. Then, \textit{any} minimum-distance estimator $\hat{\mu}_{\beta}$ of \eqref{eq:mde} satisfies the following.
\begin{itemize}
\item[(i)] Under Assumption \ref{assumption: bounded covariates}, there exists a positive constant $C = C(R, d, \tau_{0})$ such that
\begin{align*}
\begin{cases}
    \mathbb{E}[SW_{2}(\mu_{\beta}, \hat{\mu}_{\beta})] \leq C n^{-1/(d+5)}, &\text{for } 2 \leq d \leq 5\text{ with } N = k = n^{\frac{6}{d+5}}, \\
    \mathbb{E}[SW_{2}(\mu_{\beta}, \hat{\mu}_{\beta})] \leq C n^{-1/(2d-1)}, &\text{for } d \geq 6 \text{ with } N = k = n^{\frac{d}{2d-1}}.
\end{cases}
\end{align*}
\item[(ii)] Under Assumption \ref{assumption: unbounded covariates}, there exists a positive constant $C = C(R, d, \tau_{0}, \rho_{0}, C_{\tilde{X}})$ such that
\begin{align*}
\begin{cases}
    \mathbb{E}[SW_{2}(\mu_{\beta}, \hat{\mu}_{\beta})] \leq C n^{-\frac{\alpha}{2d\alpha+10\alpha+4}}\log n, &\text{for } 2 \leq d \leq 5 \text{ with } N = k = n^{\frac{6\alpha}{d\alpha+5\alpha+2}}, \\
    \mathbb{E}[SW_{2}(\mu_{\beta}, \hat{\mu}_{\beta})] \leq C n^{-\frac{\alpha}{4d\alpha-2\alpha+4}}\log n, &\text{for } d \geq 6 \text{ with } N = k = n^{\frac{d\alpha}{2d\alpha-\alpha+2}}.
\end{cases}
\end{align*}
\end{itemize}
\end{theorem}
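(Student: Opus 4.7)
The plan is to turn the minimum-distance property of $\hat{\mu}_\beta$ into an oracle inequality of the shape
\[
\mathbb{E}[SW_2^2(\mu_\beta, \hat{\mu}_\beta)] \;\lesssim\; \mathbb{E}[\mathcal{E}_k] + N^{-2/d},
\qquad \mathcal{E}_k := \int_{\mathbb{S}^{d-1}} W_2^2(\mu_\beta^V, \widebar{\mu}_k^V)\,\mathrm{d}\sigma(V),
\]
where $\mathcal{E}_k$ captures how well the $k$-NN pilot $\widebar{\mu}_k^V$ tracks the projected truth $\mu_\beta^V$, and $N^{-2/d}$ reflects the Graf--Luschgy cost of approximating $\mu_\beta$ by $N$ atoms. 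Once this oracle bound is in place, the stated rates follow from a bias--variance decomposition of $\mathbb{E}[\mathcal{E}_k]$ and tuning $k$ and $N$.

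For the oracle step, the $L^2(\sigma)$ triangle inequality applied pointwise in $V$ to $W_2(\mu_\beta^V, \hat{\mu}_\beta^V) \leq W_2(\mu_\beta^V, \widebar{\mu}_k^V) + W_2(\widebar{\mu}_k^V, \hat{\mu}_\beta^V)$ gives $SW_2^2(\mu_\beta, \hat{\mu}_\beta) \leq 2\mathcal{E}_k + 2\mathcal{F}_k(\hat{\mu}_\beta)$. The defining minimization of $\hat{\mu}_\beta$ then yields $\mathcal{F}_k(\hat{\mu}_\beta) \leq \mathcal{F}_k(\mu^\star)$ for any $\mu^\star \in \mathcal{P}_N(\widebar{B}_R(0))$; taking $\mu^\star$ to be an optimal $N$-atom quantization of $\mu_\beta$ and applying one further triangle step together with $SW_2 \leq W_2$ produces $\mathcal{F}_k(\mu^\star) \lesssim \mathcal{E}_k + N^{-2/d}$.

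To bound $\mathbb{E}[\mathcal{E}_k]$ under Assumption \ref{assumption: bounded covariates}, insert the ``oracle'' measure $\tilde{\mu}_k^V := k^{-1}\sum_{i \in S_X(V,k)}\delta_{\beta_i^\top V}$ and split $\mathcal{E}_k \leq 2\mathcal{B}_k + 2\mathcal{V}_k$ into a bias term $\mathcal{B}_k := \int W_2^2(\widebar{\mu}_k^V, \tilde{\mu}_k^V)\,\mathrm{d}\sigma$ and a variance term $\mathcal{V}_k := \int W_2^2(\tilde{\mu}_k^V, \mu_\beta^V)\,\mathrm{d}\sigma$. The identity
\[
\beta_i^\top V - \tilde{Y}_i \tilde{X}_i^\top V = \beta_i^\top\bigl(V - (\tilde{X}_i^\top V)\tilde{X}_i\bigr), \qquad \|V - (\tilde{X}_i^\top V)\tilde{X}_i\|_2 \leq \|V - \tilde{X}_i\|_2,
\]
combined with the diagonal coupling delivers $\mathcal{B}_k \leq R^2 \int \max_{i \in S_X(V,k)}\|\tilde{X}_i - V\|_2^2\,\mathrm{d}\sigma$, whose expectation is $\lesssim (k/n)^{2/(d-1)}$ by standard $k$-th nearest-neighbor concentration on $\mathbb{S}^{d-1}$ with density lower-bounded by $\tau_0$. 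The independence of $\beta$ and $X$ makes $\{\beta_i^\top V : i \in S_X(V,k)\}$, conditionally on $\{\tilde{X}_j\}_j$, equal in law to $k$ i.i.d.\ samples from $\mu_\beta^V$, so a Bobkov--Ledoux-type one-dimensional empirical Wasserstein estimate controls $\mathbb{E}[\mathcal{V}_k]$ by a polynomial in $k^{-1}$, and Fubini finishes this step.

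Combining, $\mathbb{E}[SW_2^2(\mu_\beta, \hat{\mu}_\beta)] \lesssim (k/n)^{2/(d-1)} + \mathbb{E}[\mathcal{V}_k] + N^{-2/d}$; setting $N = k$ and tuning to balance the bias $(k/n)^{2/(d-1)}$ against the dominant of the remaining two terms produces the exponents of part (i). The main obstacle is part (ii), since Assumption \ref{assumption: unbounded covariates} permits $f_{\tilde{X}}$ to vanish and the NN-distance bound that underlies $\mathcal{B}_k$ can blow up on low-density regions. The fix is the two-level splitting $\mathbb{S}^{d-1} = L(\rho,\tau) \cup (\mathbb{S}^{d-1}\setminus L(\rho,\tau))$: on $L(\rho,\tau)$ the density exceeds $\tau$ throughout a uniform $\rho$-ball, so the part-(i) estimates apply with $\tau_0$ replaced by $\tau$; on the complement, the final displayed inequality in Assumption \ref{assumption: unbounded covariates} confines the bad points to a $(C_{\tilde X}\tau^\alpha + \rho)$-tube around $L(\rho,\tau)$, whose small $\sigma$-measure tolerates the crude fallback $W_2^2 \leq 4R^2$. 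Jointly optimizing over $\rho, \tau, k, N$ then yields the rates in (ii), with the extra $\log n$ factor entering through a dyadic peeling across the $\tau$-level sets required to absorb the low-density residuals uniformly.
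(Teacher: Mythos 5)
Your part~(i) argument is essentially sound and mirrors the paper's structure (oracle inequality from the minimum-distance property, plus a bias--variance split of $\mathcal{E}_k$ into the $k$-NN geometric bias and an empirical fluctuation term), with two cosmetic departures: you take the comparator $\mu^\star\in\mathcal{P}_N$ to be a deterministic $N$-atom quantizer rather than the paper's fresh i.i.d.\ empirical measure $\mu_N = N^{-1}\sum_i\delta_{\tilde\beta_i}$, and you propose bounding the variance term by a one-dimensional Bobkov--Ledoux estimate on the projected $k$ i.i.d.\ draws rather than projecting the $d$-dimensional Fournier--Guillin bound as the paper does. Both alternatives give rates at least as good as the paper's (the 1D estimate actually gives $k^{-1/2}$ rather than the paper's conservative $k^{-1/3}$ for $d\le 5$), so with the paper's prescribed $k=n^{6/(d+5)}$ (resp.\ $k=n^{d/(2d-1)}$) the bias $(k/n)^{2/(d-1)}$ still dominates and the claimed rate follows. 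You should, however, be precise that the relevant rate for the variance term must be at most $k^{-1/3}$ (resp.\ $k^{-2/d}$) for the balance to work; ``a polynomial in $k^{-1}$'' is not enough to conclude without the accompanying computation.

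The gap is in part~(ii). Your handling of $\mathbb{S}^{d-1}\setminus L(\rho,\tau)$ uses the crude bound $W_2^2\le 4R^2$ multiplied by ``the small $\sigma$-measure'' of that set. But Assumption~\ref{assumption: unbounded covariates} does not control the measure of $\mathbb{S}^{d-1}\setminus L(\rho,\tau)$: the displayed inequality is a one-sided Hausdorff-distance statement, saying every $\tilde V\notin L(\rho,\tau)$ has a neighbor in $L(\rho,\tau)$ within $C_{\tilde X}\tau^\alpha+\rho$, and this carries no information about $\sigma(\mathbb{S}^{d-1}\setminus L(\rho,\tau))$ — a boundary layer of thickness $\delta$ around an arbitrary closed set can have $\sigma$-measure up to $O(1)$. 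The small-measure claim happens to be true in the concrete Proposition~\ref{prop:heavy-tail} setting, where $D(\tau)$ is a band of width $\sim\tau^\alpha$, but the theorem is stated under the abstract assumption. The paper's Lemma~\ref{lemma: expectation} avoids this entirely by a pointwise comparison: for $V\notin L(\rho,\tau)$, pick $\tilde V\in L(\rho,\tau)$ with $\|V-\tilde V\|_2\le C_{\tilde X}\tau^\alpha+\rho$ and show the nearest-neighbor radius transfers, $\gamma(V,k)^2\le \|V-\tilde V\|_2 + \gamma(\tilde V,k)^2$, so that the good bound from Step~2 propagates to all of $\mathbb{S}^{d-1}$ with only an additive $O(\tau^\alpha+\rho)$ penalty; no measure estimate on the bad set is needed. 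Relatedly, your explanation of the $\log n$ factor (``dyadic peeling across $\tau$-level sets'') is not where it comes from: the $\log n$ arises simply because the choice of $\rho$ must be inflated by $\log n$ to guarantee the side condition $k/(n\tau\rho^{d-1})\to 0$ needed for the Bernstein tail in Lemma~\ref{lemma: expectation}(2).
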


It is worth noting that Theorem \ref{proposition: convex} guarantees the existence of global minimizer(s) for \eqref{eq:mde}, but it does not provide a general guarantee of uniqueness unless $N = k = n$. 
Theorem \ref{theorem: bounded estimator} further demonstrates that \textit{any} global minimizer of \eqref{eq:mde} achieves a polynomial rate of convergence to the true parameter $\mu_\beta$. For related discussions, see \citet[Remark 3.1]{Miao24}, which observe similar phenomena in nonparametric Poisson mixtures where multiple global minimizers (in that case, nonparametric maximum likelihood estimators) may exist but all converge to the estimand. 

At this stage, it remains unclear whether the derived rate is minimax optimal or not. Moreover, as different metrics are employed, the obtained results are not directly comparable to those in the literature. Nevertheless, it is worth highlighting that, unlike prior works including \cite{Hoderlein2010}, \cite{Holzmann20}, and \cite{Gaillac22}, our analysis avoids imposing {\it any} smoothness conditions on the densities of $X$ or $\tilde{X}$, which underscores an advantage of the proposed distance-based approach.

\subsection{Computation}\label{subsection: computation}

Section \ref{subsection: estimation} discusses the statistical properties of the estimator $\hat\mu_\beta$ in \eqref{eq:mde}. However, from a practical standpoint, it is also crucial to understand the procedure from a computational perspective, which is the focus of this section. In this section, the algorithms of interest for approximating $\hat\mu_{\beta}$ are Algorithms \ref{algorithm: bcd} and \ref{algorithm: abcd} presented in Section \ref{section: algorithm}.

Our first theoretical result in this section examines the landscape of the approximate objective function \eqref{eq:approximate-mde}, paralleling Theorem \ref{proposition: convex}.

\begin{proposition}\label{proposition: discrete continuity}
Under Assumption \ref{assumption: coefficient}, the random function $\widebar{\mathcal{F}}_{k}\circ \eta$ over the domain $\widebar{B}_{R}(0)^{N}$ is $4RN^{-1/2}$-Lipschitz continuous. Additionally, with probability one, there exist global minimizer(s) for \eqref{eq:approximate-mde}, which is unique (up to a permutation) if $N = k = n$.
\end{proposition}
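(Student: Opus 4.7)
The plan is to parallel the proof of Theorem \ref{proposition: convex} step by step, replacing the integral $\int_{\mathbb{S}^{d-1}}\cdot\,\mathrm{d}\sigma(V)$ by the Monte Carlo average $m^{-1}\sum_{j=1}^m$ throughout. Because both operations are nonnegative linear averages, the Lipschitz estimate and the existence of minimizers transfer with essentially no change; the delicate piece is uniqueness when $N=k=n$, since the continuous version could invoke a Cram\'er--Wold argument after the projections agreed for $\sigma$-a.e.\ $V$, whereas here we only have equality at the $m$ sampled directions.

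For the Lipschitz bound, fix $V\in\mathbb{S}^{d-1}$ and $w,w'\in\widebar{B}_R(0)^N$. The product coupling $(w_i,w_i')_{i=1}^N$ gives $W_2(\eta(w)^V,\eta(w')^V)\leq N^{-1/2}\|w-w'\|_2$. Since $\widebar{\mu}_k^V$, $\eta(w)^V$ and $\eta(w')^V$ are all supported in $[-R,R]$ (using $|\tilde{Y}_i\tilde{X}_i^\top V|\leq R$ and $|w_i^\top V|\leq R$), their pairwise $W_2$ distances are bounded by $2R$, and the identity $|a^2-b^2|\leq (a+b)|a-b|$ combined with the reverse triangle inequality yields
\[
\bigl|W_2^2(\widebar{\mu}_k^V,\eta(w)^V)-W_2^2(\widebar{\mu}_k^V,\eta(w')^V)\bigr|\leq 4R\cdot N^{-1/2}\|w-w'\|_2.
\]
Averaging over $V_1,\ldots,V_m$ preserves the $4RN^{-1/2}$-Lipschitz constant, and continuity on the compact set $\widebar{B}_R(0)^N$ then yields existence of a global minimizer.

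For uniqueness when $N=k=n$, set $w^*:=(\tilde{Y}_i\tilde{X}_i)_{i=1}^n$. By Assumption \ref{assumption: coefficient}, $\|w_i^*\|_2=|\tilde{Y}_i|\leq R$, and since $k=n$ forces $S_X(V,n)=[n]$ for every $V$, we have $\widebar{\mu}_n^V=\eta(w^*)^V$ identically in $V$. Hence $\widebar{\mathcal{F}}_n(\eta(w^*))=0$, so the minimum value is $0$. Any competing minimizer $w$ must satisfy $\{w_i^\top V_j\}_{i\in[n]}=\{(w_i^*)^\top V_j\}_{i\in[n]}$ as multisets for every $j\in[m]$, i.e., there exist permutations $\pi_j\in S_n$ with $(w_i-w_{\pi_j(i)}^*)^\top V_j=0$ for all $i,j$.

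The main obstacle is to upgrade these equalities to $\eta(w)=\eta(w^*)$. I would argue via a genericity/union-bound over the finite family $(\pi_1,\ldots,\pi_m)\in S_n^m$: for each tuple and each $i\in[n]$, the per-atom system $\{(w_i-w_{\pi_j(i)}^*)^\top V_j=0\}_{j\in[m]}$ decomposes. If $\pi_j(i)$ is constant in $j$, then with probability one $\{V_j\}_{j\in[m]}$ span $\mathbb{R}^d$ (provided $m\geq d$) and force $w_i=w_{\pi_1(i)}^*$, reproducing a permutation of $w^*$. If $\pi_j(i)$ varies with $j$, solving the first $d$ equations pins down a candidate $w_i^{\mathrm{cand}}$, and each of the remaining $m-d$ equations imposes a codimension-one constraint on the corresponding $V_j$ of zero Haar mass; a union bound over tuples then eliminates all non-diagonal possibilities with probability one. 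Verifying this codimension claim uniformly across non-diagonal tuples, and implicitly requiring $m$ sufficiently large (e.g., $m>d$), is the delicate step.
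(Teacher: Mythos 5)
Your Lipschitz and existence arguments match the paper's proof step by step: the paper derives the same coupling bound $W_2^2(\eta(w)^V,\eta(\tilde w)^V)\le N^{-1}\Vert w-\tilde w\Vert_2^2$, uses the same $4R$ factor from the projected measures living in $[-R,R]$, and averages over $j\in[m]$ to conclude $4RN^{-1/2}$-Lipschitz continuity and then existence on the compact domain.

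The divergence is the uniqueness claim for $N=k=n$, where the paper's proof \emph{merely asserts} that $n^{-1}\sum_i\delta_{\tilde Y_i\tilde X_i}$ is the unique global minimizer with no supporting argument. You have correctly noticed that the justification used for the continuous objective $\mathcal{F}_k$ (projections agreeing $\sigma$-a.e.\ plus Cram\'er--Wold/Radon injectivity) does not transfer to the Monte Carlo objective $\widebar{\mathcal{F}}_k$, and your concern exposes a genuine issue in the proposition as stated: when $\{V_1,\dots,V_m\}$ fails to span $\mathbb{R}^d$ (in particular whenever $m<d$), one can translate every $w_i$ by a fixed vector in $\{V_1,\dots,V_m\}^\perp$ without changing any projection, so $\widebar{\mathcal{F}}_k\circ\eta$ has a continuum of minimizing measures. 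The statement therefore tacitly needs a lower bound on $m$, which the paper never records. Your genericity/union-bound sketch over the finite set of permutation tuples is a sound way to establish uniqueness with probability one when $m>d$: for fixed $i$ and a non-constant sequence $(\pi_j(i))_j$, the first $d$ directions determine a candidate $w_i^{\mathrm{cand}}$ via Cramer's rule, and the constraint $(w_i^{\mathrm{cand}}-w^*_{\pi_{d+1}(i)})^\top V_{d+1}=0$ is then a Haar-null event in $V_{d+1}$ unless the candidate equals the atom, while a constant sequence forces $w_i=w^*_{\pi_1(i)}$ outright. The remaining bookkeeping (distinguishing the benign coincidence $w_i^{\mathrm{cand}}=w^*_l$, which still yields a permutation, from a genuine spurious solution, and taking the union bound over all tuples) is exactly what the paper omits; you have found a gap in the paper's reasoning and proposed a plausible, if not yet fully verified, repair.
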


Next, we aim to quantify the algorithmic convergence. To this end, we introduce the following proposition, which analyzes the approximation of $\widebar{\mathcal{F}}_{k}\circ\eta$ to $\mathcal{F}_{k}\circ\eta$.

\begin{proposition}\label{proposition: uniform convergence}
Under Assumption \ref{assumption: coefficient} and conditional on the data:
\begin{itemize}
\item[(i)] $\widebar{\mathcal{F}}_{k}\circ \eta$ converges to $\mathcal{F}_{k}\circ \eta$ uniformly over $\widebar{B}_{R}(0)^{N}$ in the sense that
\begin{align*}
    \mathbb{P}_{\sigma}\Big(\lim_{m\to\infty} \Big\Vert \widebar{\mathcal{F}}_{k}\circ\eta - \mathcal{F}_{k}\circ \eta \Big\Vert_{\infty} = 0\Big) = 1,
\end{align*}
where 
\[
\Big\Vert \widebar{\mathcal{F}}_{k}\circ\eta - \mathcal{F}_{k}\circ \eta \Big\Vert_{\infty} := \sup_{w\in \widebar{B}_{R}(0)^{N}}\Big\Vert \widebar{\mathcal{F}}_{k}\circ\eta(w) - \mathcal{F}_{k}\circ \eta(w) \Big\Vert;
\]

\item[(ii)] furthermore, 
\begin{align*}
    \sqrt{m} \Big\Vert \widebar{\mathcal{F}}_{k}\circ\eta - \mathcal{F}_{k}\circ\eta \Big\Vert_{\infty} ~~\text{converges in distribution to}~~ \Big\Vert \mathbb{G} \Big\Vert_{\infty},
\end{align*}
where $\mathbb{G}$ is a centered Gaussian process on $\widebar{B}_{R}(0)^{N}$ with covariance structure
\begin{align*}
    \textnormal{cov}\,\mathbb{G}(w, \tilde{w}) = \int_{V \in \mathbb{S}^{d-1}} W_{2}^{2}(\widebar{\mu}_{k}^{V}, \eta(w)^{V})W_{2}^{2}(\widebar{\mu}_{k}^{V}, \eta(\tilde{w})^{V}) \dd \sigma(V) - \mathcal{F}_{k}(\eta(w))\mathcal{F}_{k}(\eta(\tilde{w})). 
\end{align*}
\end{itemize}
\end{proposition}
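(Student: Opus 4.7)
The plan is to view the difference $\widebar{\mathcal{F}}_k \circ \eta - \mathcal{F}_k \circ \eta$, conditional on the data $\{(X_i, Y_i, \beta_i)\}_{i \in [n]}$, as an empirical process over the compact domain $\widebar{B}_R(0)^N$, indexed by the function class
\[
\mathcal{G} = \Big\{ g_w : \mathbb{S}^{d-1} \to \mathbb{R} \;\Big|\; g_w(V) = W_2^2\big(\widebar{\mu}_k^V, \eta(w)^V\big),\; w \in \widebar{B}_R(0)^N \Big\}.
\]
With this identification, $\widebar{\mathcal{F}}_k(\eta(w)) = m^{-1} \sum_{j=1}^m g_w(V_j)$ and $\mathcal{F}_k(\eta(w)) = \int g_w\, \dd \sigma$, so both claims become standard uniform law and functional central limit assertions for an empirical process indexed by $\mathcal{G}$, with $V_1, \dots, V_m$ i.i.d.\ from $\sigma$.

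The two structural properties of $\mathcal{G}$ I would establish first, conditional on the data, are: \textbf{(a)} \emph{uniform boundedness}: since $\eta(w)^V$ sits inside $[-R,R]$ and $\widebar{\mu}_k^V$ sits inside $[-M,M]$ for $M = \max_{i \in [n]} |\tilde{Y}_i|$, one has $g_w(V) \leq 2(R+M)^2$ uniformly in $(w,V)$; and \textbf{(b)} \emph{Lipschitz continuity in $w$, uniformly in $V$}: using the one-dimensional quantile representation $W_2^2(\mu,\nu) = \int_0^1 (F_\mu^{-1} - F_\nu^{-1})^2$, the fact that the sorting map is $1$-Lipschitz in $\ell^2$, and $|P^V| \leq 1$, one shows
\[
|g_w(V) - g_{\tilde{w}}(V)| \leq C \|w - \tilde{w}\|_2
\]
for a constant $C$ depending only on $R$ and $M$. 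The Lipschitz bound is essentially the $V$-pointwise version of Proposition~\ref{proposition: discrete continuity}, tracked to confirm $V$-uniformity.

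For part (i), a standard net argument then suffices: cover $\widebar{B}_R(0)^N$ by a finite $\varepsilon$-net $\{w^{(1)}, \dots, w^{(M_\varepsilon)}\}$; the strong law of large numbers gives pointwise a.s.\ convergence at each net point, while (b) together with the matching Lipschitz property of $\mathcal{F}_k \circ \eta$ (inherited from $\mathcal{G}$ via dominated convergence) controls the oscillation between net points by $2C\varepsilon$. Letting $m \to \infty$ and then $\varepsilon \to 0$ yields $\|\widebar{\mathcal{F}}_k \circ \eta - \mathcal{F}_k \circ \eta\|_\infty \to 0$ almost surely under $\mathbb{P}_\sigma$.

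For part (ii), the Lipschitz bound (b) and compactness of $\widebar{B}_R(0)^N$ yield polynomial covering numbers $N(\varepsilon, \mathcal{G}, L^2(\sigma)) \lesssim \varepsilon^{-Nd}$, so $\mathcal{G}$ has a finite uniform entropy integral and hence is $\sigma$-Donsker by standard empirical process theory. The functional CLT produces the weak convergence of $\sqrt{m}(\widebar{\mathcal{F}}_k \circ \eta - \mathcal{F}_k \circ \eta)$ as a random element of $\ell^\infty(\widebar{B}_R(0)^N)$ to a tight centered Gaussian process $\mathbb{G}$ whose covariance equals $\textnormal{cov}(g_w(V), g_{\tilde{w}}(V))$ under $V \sim \sigma$, which upon expansion gives exactly the stated formula. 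The convergence $\sqrt{m}\|\widebar{\mathcal{F}}_k \circ \eta - \mathcal{F}_k \circ \eta\|_\infty \Rightarrow \|\mathbb{G}\|_\infty$ follows from the continuous mapping theorem, as $\|\cdot\|_\infty$ is continuous on the tight subspace of bounded continuous functions supporting $\mathbb{G}$. The main technical obstacle I expect is the $V$-uniform Lipschitz bound in (b): one must control how the sorted projections $\langle V, w_i \rangle$ shift with $w$, and, when $N \neq k$, handle the cardinality mismatch between $\widebar{\mu}_k^V$ and $\eta(w)^V$ via the quantile representation rather than sorted-atom matching. Everything downstream of (b) is routine empirical-process machinery.
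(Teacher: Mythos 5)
Your proposal is correct and takes essentially the same route as the paper: the paper also views the problem as an empirical process indexed by the Lipschitz class $\{V \mapsto W_2^2(\widebar{\mu}_k^V, \eta(w)^V)\}$, upgrades pointwise SLLN on a countable dense subset to uniform convergence via the $4RN^{-1/2}$-Lipschitz bound (your finite $\varepsilon$-net argument is the same idea packaged differently), and for (ii) invokes the Donsker property for Lipschitz-in-parameter classes (citing van der Vaart's Example 19.7, which is exactly the entropy-integral argument you sketch) followed by the continuous mapping theorem for $\|\cdot\|_\infty$. One small observation: under Assumption \ref{assumption: coefficient} the constant $M = \max_i|\tilde Y_i|$ in your uniform bound is automatically at most $R$, so the paper can keep the constant depending only on $R$, and the cardinality mismatch when $N \neq k$ is indeed handled seamlessly by the quantile representation of one-dimensional $W_2$, just as you anticipate.
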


The main theorem of this section establishes the validity of Algorithms \ref{algorithm: bcd} and \ref{algorithm: abcd}.

\begin{theorem}\label{theorem: computation}
Assuming $N = k$ and under Assumption \ref{assumption: coefficient}, Algorithm \ref{algorithm: bcd} outputs a local minimizer of $\widebar{\mathcal{F}}_{k}(\eta(w))$. Furthermore, in the case of $\psi_{\ell} = \tilde{\psi}_{\ell}$, where $\psi_{\ell}$ and $\tilde{\psi}_{\ell}$ denote the $\ell$-th iteration of Algorithms \ref{algorithm: bcd} and \ref{algorithm: abcd}, respectively, we have
\begin{align*}
    \lim_{m\to\infty} \mathbb{P}_{\sigma}\Big( W_{2}(\eta(\psi_{\ell+1}), \eta( \tilde{\psi}_{\ell+1})) > \epsilon \Big) = 0
\end{align*}
for all $\epsilon > 0$.
\end{theorem}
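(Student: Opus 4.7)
The overall strategy is to exploit Lemma \ref{lemma: Bobkov}, which expresses the one-dimensional $W_{2}^{2}$ via sorting: for each $V_{j}$,
\[
W_{2}^{2}(\widebar{\mu}_{k}^{V_{j}}, \eta(w)^{V_{j}}) = k^{-1}\sum_{q=1}^{k}(\langle V_{j}, w_{q}\rangle - D_{\nu_{j}^{-1}(q),j})^{2},
\]
where $\nu_{j}$ is the argsort of $(\langle V_{j}, w_{q}\rangle)_{q}$. Hence
\[
\widebar{\mathcal{F}}_{k}(\eta(w)) = (mk)^{-1}\sum_{j=1}^{m}\min_{\pi_{j}}\sum_{q=1}^{k}(\langle V_{j}, w_{q}\rangle - D_{\pi_{j}^{-1}(q),j})^{2},
\]
and once the permutations are fixed the objective decomposes additively over the blocks $w_{q}\in\widebar{B}_{R}(0)$, becoming a sum of convex quadratics. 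This reformulation drives both claims of the theorem.

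For the first claim, at a stopping point $\psi^{*} = \psi_{\ell} = \psi_{\ell-1}$ with associated permutations $\nu_{j}^{*} = \textrm{argsort}(\langle V_{j},\psi_{q}^{*}\rangle_{q})$, by construction $\psi^{*}$ is the constrained minimizer of $Q(w) := \sum_{j,q}(\langle V_{j}, w_{q}\rangle - D_{(\nu_{j}^{*})^{-1}(q),j})^{2}$ over $\widebar{B}_{R}(0)^{k}$, while $\nu_{j}^{*}$ itself agrees with the argsort at $\psi^{*}$. Generically (with probability one under the uniform initialization $\psi_{1}$) the projections $\{\langle V_{j},\psi_{q}^{*}\rangle\}_{q}$ are tie-free for every $j$, so for sufficiently small perturbations $w = \psi^{*}+\epsilon\,\delta\psi$ the argsort permutations remain $\nu_{j}^{*}$, whence $\widebar{\mathcal{F}}_{k}(\eta(w)) = Q(w)/(mk)$ locally. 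Since $Q$ is convex and $\psi^{*}$ minimizes $Q$ on $\widebar{B}_{R}(0)^{k}$, one concludes $\widebar{\mathcal{F}}_{k}(\eta(w))\geq \widebar{\mathcal{F}}_{k}(\eta(\psi^{*}))$ for all such $w$, establishing local optimality.

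For the second claim, suppose $\psi_{\ell}=\tilde{\psi}_{\ell}$. Then both algorithms produce the same permutations $\nu_{j}$, and Algorithm \ref{algorithm: bcd}'s update solves, for each $q$, the constrained problem of minimizing $f_{q}(w_{q}) = \sum_{j}(\langle V_{j},w_{q}\rangle - D_{\nu_{j}^{-1}(q),j})^{2}$ over $\widebar{B}_{R}(0)$. Its unconstrained minimizer is $A_{m}^{-1}b_{q,m}$, where $A_{m}=m^{-1}\sum_{j}V_{j}V_{j}^{\top}$ and $b_{q,m}=m^{-1}\sum_{j}D_{\nu_{j}^{-1}(q),j}V_{j}$, while Algorithm \ref{algorithm: abcd} returns $\textrm{Proj}_{\widebar{B}_{R}(0)}(d\,b_{q,m})$. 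Since $V_{j}\overset{\text{i.i.d.}}{\sim}\sigma$, the strong law yields $A_{m}\to d^{-1}I_{d}$ almost surely, so $A_{m}^{-1}\to d I_{d}$; moreover $|D_{q,j}|\leq R$ under Assumption \ref{assumption: coefficient} keeps $\|b_{q,m}\|_{2}$ uniformly bounded, giving $\|A_{m}^{-1}b_{q,m} - d\,b_{q,m}\|_{2}\to 0$ in probability. The main technical obstacle is then relating the two projections: the Algorithm \ref{algorithm: bcd} update is the projection of $A_{m}^{-1}b_{q,m}$ onto $\widebar{B}_{R}(0)$ under the Mahalanobis-type norm induced by $A_{m}$, whereas Algorithm \ref{algorithm: abcd} uses the Euclidean projection of $d\,b_{q,m}$. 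I resolve this via continuity of solutions of strongly convex programs in their coefficients: because $A_{m}\to d^{-1}I_{d}$ (a positive multiple of the identity), the Mahalanobis projection converges to the Euclidean projection, and combining this with the $1$-Lipschitzness of Euclidean projection onto convex sets gives $\|\psi_{\ell+1,q}-\tilde{\psi}_{\ell+1,q}\|_{2}\to 0$ in probability. The estimate $W_{2}(\eta(\psi_{\ell+1}),\eta(\tilde{\psi}_{\ell+1}))\leq k^{-1/2}\|\psi_{\ell+1}-\tilde{\psi}_{\ell+1}\|_{2}$, obtained from the identity coupling between equal-weight empirical measures, then concludes the proof.
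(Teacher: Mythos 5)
Your proof follows essentially the same route as the paper's: both invoke Lemma~\ref{lemma: Bobkov} to reduce $\widebar{\mathcal{F}}_{k}\circ\eta$ to a finite collection of convex quadratics indexed by permutations (giving local optimality at a stopping point), identify the per-block update of Algorithm~\ref{algorithm: bcd} as a Mahalanobis projection of $L_{m}^{-1}\widebar{V}_{q}$ and that of Algorithm~\ref{algorithm: abcd} as the Euclidean projection of $m^{-1}d\,\widebar{V}_{q}$, use the strong law of large numbers $m^{-1}L_{m}\to d^{-1}I$, and close with the bound $W_{2}(\eta(\psi_{\ell+1}),\eta(\tilde{\psi}_{\ell+1}))\leq k^{-1/2}\Vert\psi_{\ell+1}-\tilde{\psi}_{\ell+1}\Vert_{2}$. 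The only substantive difference is presentational: where you appeal to ``continuity of solutions of strongly convex programs,'' the paper explicitly establishes uniform convergence of $\Gamma_{q}-\tilde{\Gamma}_{q}$ over $\widebar{B}_{R}(0)$ (via Lipschitzness plus a countable-dense-set/equicontinuity argument) and then applies the projection inequality of Bubeck's Lemma 3.1 to pass from function-value closeness to minimizer closeness; both of you leave the tie-breaking degeneracy at block boundaries equally implicit.
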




\section{A diffusion process approach}\label{section: diffusion}

In this section, building on a recent breakthrough connecting optimal transport-based generative modeling to diffusion processes \citep{liutkus2019sliced}, we propose a diffusion process-based approximation to the optimization problem \eqref{eq:mde}. While rigorous theoretical support for this approximation is currently unavailable, its empirical effectiveness has been studied and will be demonstrated in Section \ref{section: simulations}. 

Specifically, consider the following minimization problem involving a regularized functional $\tilde{\mathcal{F}}_{k}^{\lambda}(\mu)$ over the space of probability measures $(\mathcal{P}(\widebar{B}_{R}(0)), W_{2})$:
\begin{align*}
    \tilde{\mathcal{F}}_{k}^{\lambda}(\mu) := \frac{1}{2} \mathcal{F}_{k}(\mu) + \lambda \mathcal{H}(\mu),
\end{align*}
where $\mathcal{F}_{k}(\cdot)$ is introduced in \eqref{eq:fk}, $\lambda > 0$ is a tuning parameter, and $\mathcal{H}(\mu)$ is the regularization functional defined as:
\begin{align*}
    \mathcal{H}(\mu) = 
    \begin{cases}
        \int \varrho \log \varrho\,\dd x, & \text{if the density $\varrho$ of $\mu$ exists}, \\
        +\infty, & \text{otherwise}.
    \end{cases}
\end{align*}

The following proposition provides the theoretical background for formulating the associated continuity equation in this setting. For functions \(f\colon \mathbb{R}^{d} \to \mathbb{R}^{d}\) and \(\tilde{f}\colon \mathbb{R}^{d} \to \mathbb{R}\), define:
\[
\textnormal{div} f \coloneqq \sum_{i=1}^{d} \frac{\partial f_{i}}{\partial x_{i}}, \quad \Delta \tilde{f} \coloneqq \sum_{i=1}^{d} \frac{\partial^{2} \tilde{f}}{\partial x_{i}^{2}}.
\]

\begin{proposition}[Theorem 2, \cite{liutkus2019sliced}]
\label{proposition: flow}
Assume that \(\lambda > 0\), \(R > \sqrt{d}\), and let \(\mu_{0}\) be a probability measure in \(\mathcal{P}(\widebar{B}_{R}(0))\) with density \(\varrho_{0} \in L^{\infty}(\widebar{B}_{R}(0))\). Then, for the minimization problem:
\begin{align*}
    \tilde{\mathcal{F}}_{\zeta}^{\lambda}(\mu) = \frac{1}{2}SW_{2}^{2}(\mu, \zeta) + \lambda \mathcal{H}(\mu),
\end{align*}
where \(\zeta\) is a probability measure with positive smooth density, there exists a generalized minimizing movement scheme such that the density \((\varrho_{t})_{t}\) of \((\mu_{t})_{t}\) satisfies the continuity equation:
\begin{align}\label{eq: continuity}
    \frac{\partial \varrho_{t}}{\partial t} = -\textnormal{div}(v_{t} \varrho_{t}) + \lambda \Delta \varrho_{t},
\end{align}
in the weak sense \citep[Page 123]{santambrogio2015optimal}, where \(v_{t}\) is associated with the Kantorovich potential \(\Upsilon_{t}^{V}\) \citep[Page 13]{santambrogio2015optimal} between \(\mu_{t}^{V}\) and \(\zeta^{V}\), given by:
\begin{align*}
    v_{t}(x) = v(x, \mu_{t}) = -\int_{\mathbb{S}^{d-1}} (\Upsilon_{t}^{V})'\,(\langle x, V \rangle)\,V \dd \sigma(V).
\end{align*}
\end{proposition}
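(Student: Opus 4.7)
The plan is to follow the classical Jordan--Kinderlehrer--Otto (JKO) variational scheme adapted to the sliced Wasserstein setting, which is the strategy underlying \cite{liutkus2019sliced}. Fix a time step $\tau > 0$, set $\mu_0^{\tau} = \mu_0$, and iteratively define
\[
\mu_{k+1}^{\tau} \in \argmin_{\mu \in \mathcal{P}(\widebar{B}_R(0))} \Big\{ \tilde{\mathcal{F}}_{\zeta}^{\lambda}(\mu) + \frac{1}{2\tau} W_2^2(\mu, \mu_k^{\tau}) \Big\}.
\]
Existence of each minimizer follows from compactness of $\widebar{B}_R(0)$, the narrow continuity of $SW_2^2(\cdot, \zeta)$ (which is dominated by $W_2^2$), and the standard lower semi-continuity of the entropy $\mathcal{H}$ on the space of absolutely continuous measures. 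The hypothesis $R > \sqrt{d}$ ensures that the entropy is not immediately $+\infty$ on the feasible set, so the scheme is well posed.

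Next I would identify the continuity equation by computing the first variation of $\tilde{\mathcal{F}}_{\zeta}^{\lambda}$ at a minimizer $\mu_{k+1}^{\tau}$. Using the disintegration $SW_2^2(\mu,\zeta) = \int_{\mathbb{S}^{d-1}} W_2^2(\mu^V, \zeta^V) \dd \sigma(V)$ and the one-dimensional Kantorovich duality for each slice, the first variation of $\frac{1}{2} SW_2^2(\cdot, \zeta)$ at $\mu$ evaluates at $x$ to $\int_{\mathbb{S}^{d-1}} \Upsilon^V(\langle x, V\rangle) \dd \sigma(V)$, where $\Upsilon^V$ is the Kantorovich potential between $\mu^V$ and $\zeta^V$. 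Differentiating in $x$ under the integral and chaining through the projection yields precisely the velocity field
\[
v(x,\mu) = -\int_{\mathbb{S}^{d-1}} (\Upsilon^V)'(\langle x, V\rangle) V \dd \sigma(V).
\]
The entropy contributes, via its first variation $\log \varrho + 1$ and integration by parts, the Laplacian term $\lambda \Delta \varrho_t$ in the continuity equation. Combining these gives the discrete Euler--Lagrange identity
\[
\frac{\mu_{k+1}^{\tau} - \mu_k^{\tau}}{\tau} + \mathrm{div}(v_{k+1}^{\tau}\, \mu_{k+1}^{\tau}) - \lambda \Delta \mu_{k+1}^{\tau} = o(1)
\]
in the distributional sense.

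To pass to continuous time, I would interpolate the discrete measures into a curve $(\mu_t^{\tau})_t$, derive the standard JKO energy estimate $\sum_k W_2^2(\mu_{k+1}^{\tau}, \mu_k^{\tau}) \le 2\tau\, \tilde{\mathcal{F}}_{\zeta}^{\lambda}(\mu_0)$, and apply Ascoli--Arzel\`a in the narrow topology together with tightness on the compact ball $\widebar{B}_R(0)$ to extract a limit curve $(\mu_t)_t$ as $\tau \downarrow 0$. The bound on $\|\varrho_0\|_{L^\infty}$ combined with a maximum-principle-type argument for the regularized scheme propagates $L^\infty$ bounds on $\varrho_t$, which are needed both to make sense of $\mathcal{H}$ along the flow and to justify the weak formulation of \eqref{eq: continuity}.

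The main obstacle will be the limit passage in the nonlinear velocity field. Because $V \mapsto \Upsilon^V$ depends nonlinearly on the family of projections $\mu^V$, one needs a uniform-in-$V$ stability statement: if $\mu^{\tau} \to \mu$ narrowly, then $(\Upsilon^V)'$ converges appropriately along almost every slice, and the convergence is dominated so the outer integral over $\mathbb{S}^{d-1}$ passes to the limit. This reduces to a measurable-selection plus dominated convergence argument that leverages continuity of the slicing map $\mu \mapsto \mu^V$ under $W_2$ and compactness of $\mathbb{S}^{d-1}$, together with uniform bounds on the one-dimensional Kantorovich potentials coming from the boundedness of the supports. Once this stability is established, the discrete Euler--Lagrange equation passes to the limit and yields \eqref{eq: continuity} in the weak sense, completing the proof.
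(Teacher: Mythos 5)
The paper does not prove this proposition; it is stated verbatim as a citation of Theorem~2 in \cite{liutkus2019sliced}, and no proof appears in the Appendix. There is therefore no ``paper proof'' to compare your attempt against.

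That said, your sketch does trace the general strategy behind the cited result: the generalized minimizing movement (JKO) scheme, Euler--Lagrange/first-variation computations yielding the sliced velocity field and the Laplacian from the entropy, energy estimates, and a compactness argument to pass $\tau \downarrow 0$. This is indeed the approach of \cite{liutkus2019sliced}, which in turn builds on Bonnotte's treatment of sliced Wasserstein gradient flows without entropy. A few points in your sketch are more optimistic than rigorous and would need care if you were actually reproducing the proof rather than the paper's citation. First, your reading of the hypothesis $R > \sqrt{d}$ --- that it ``ensures the entropy is not immediately $+\infty$ on the feasible set'' --- is not right: $\mathcal{H}(\mu)$ is finite for any bounded density on any bounded domain, regardless of $R$; the role of $R > \sqrt{d}$ in \cite{liutkus2019sliced} is a technical lower bound condition (making the uniform density on $\widebar{B}_{R}(0)$ have nonpositive entropy so that $\mathcal{H}$ is lower bounded uniformly and the JKO functional is proper), not a finiteness condition. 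Second, the claim that the entropy contributes $\lambda\Delta\varrho_t$ ``via its first variation $\log\varrho + 1$ and integration by parts'' silently assumes enough regularity of $\varrho_t$ for those manipulations, which is precisely what the generalized-minimizing-movement machinery must justify rather than assume. Third, the stability of the sliced Kantorovich potentials in the limit passage, which you correctly identify as the main obstacle, is where most of the technical content of Theorem~2 of \cite{liutkus2019sliced} lives; a ``measurable-selection plus dominated convergence'' one-liner does not settle it. None of this is a defect of the current paper, which simply invokes the result, but it is worth being aware that your outline compresses the genuinely hard steps of the source proof.
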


\begin{remark}
The regularization functional \(\mathcal{H}(\mu)\) plays a crucial role in generative modeling, as it is believed to mitigate overfitting. In contrast, when \(\lambda = 0\), the problem reduces to the original estimation task, with the corresponding continuity equation provided in, e.g., \citet[Theorem 5.6.1]{bonnotte2013unidimensional}. Furthermore, by defining \(\mathcal{H}(\mu)\), we restrict our focus to those probability measures \(\mu\) that are absolutely continuous with respect to the Lebesgue measure. While this excludes the discrete cases considered in Section~\ref{section: theory}, it aligns with the smoothness requirements of optimal transport maps \citep[Chapter 5.4.2]{bonnotte2013unidimensional}.  
\end{remark}

Equation \eqref{eq: continuity} resembles the Fokker-Planck equation \citep[Theorem 2.2]{pavliotis2014stochastic} for the stochastic differential equation
\begin{align}\label{eq: sde}
    \dd Q_{t} = v(Q_{t}, \mu_{t})\, \dd t + \sqrt{2\lambda}\,\dd W_{t},
\end{align}
where \((W_{t})_{t}\) is a standard Brownian motion. We then propose Algorithm \ref{algorithm: flow} to approximate \eqref{eq: sde} using particle systems \citep[Algorithm 1]{liutkus2019sliced} and Euler-Maruyama discretization \citep[Section 5.2]{pavliotis2014stochastic}. For \(L\) particles at step \(\ell\), the iteration is given by:
\begin{align}\label{eq: particle}
    Q_{(\ell+1)h}^{i} = Q_{\ell h}^{i} + v(Q_{\ell h}^{i}, \tilde{\mu}_{\ell h}^{L})\,h + \sqrt{2\lambda h}\,Z_{\ell h}^{i}, \quad 1 \leq i \leq L,
\end{align}
where \(Z_{\ell h}^{i}\) is standard Gaussian noise. The drift \(v(Q_{\ell h}^{i}, \tilde{\mu}_{\ell h}^{L})\) is computed as:
\begin{align}\label{eq: drift}
    v(Q_{\ell h}^{i}, \tilde{\mu}_{\ell h}^{L}) = -\frac{1}{m}\sum_{j=1}^{m} \left(\langle Q_{\ell h}^{i}, V_{j} \rangle - F_{\widebar{\mu}^{V}_{k}}^{-1} \circ F_{\tilde{\varrho}_{\ell h}^{V}}(\langle Q_{\ell h}^{i}, V_{j} \rangle)\right)V_{j},
\end{align}
where \(F_{\tilde{\varrho}_{\ell h}^{V}}\) is the empirical CDF of \((\tilde{\mu}_{\ell h}^{L})^{V}\), and \(F_{\widebar{\mu}^{V}_{k}}^{-1}\) is the empirical quantile function for \(\widebar{\mu}^{V}_{k}\). The algorithm outputs \(Q_{(t+1)h}^{i}\) as samples of the unobserved coefficient. If \(m\) scales polynomially with \(d\), the algorithm has polynomial complexity in \(d\). 

\begin{algorithm}[!ht]
\caption{Diffusion-based Generative Modeling}
\label{algorithm: flow}
\LinesNotNumbered
  \KwInput{$(Y_{i}, \beta_{i}, X_{i})$, $V_{j}$, $L$, $R$, $\lambda$, $h$, $t$, $i \in [n]$,  $ j \in [m]$}
  \KwInitialization{$\psi_{1} \sim \text{Uniform}\,(\widebar{B}_{R}(0)^{L})$, $D\in \mathbb{R}^{k \times m}$}
  \For{$j = 1$ to $m$}
  {Choose the $k$-NNs of $V_{j}$ as $\{\tilde{X}_{j(q)}; q\in[k]\}$ with ties broken arbitrarily\\
   $D_{\cdot,\, j} \leftarrow \text{sort}\,(\, \tilde{Y}_{j(q)}\,\langle\, V_{j},\,\tilde{X}_{j(q)}\,\rangle\,)$ for each $q\in[k]$
  }
  Construct $F_{\widebar{\mu}^{V}_{k}}^{-1}$ with $D$\\
  \For{$\ell = 1$ to $t$}
  {Construct $F_{\tilde{\varrho}_{\ell h}^{V}}$ with $\psi_{\ell}$\\
  \For{$i = 1$ to $L$}
  {$v(\psi_{\ell, i}, \tilde{\mu}_{\ell h}^{L}) \leftarrow -\frac{1}{m}\sum_{j = 1}^{m} (\, \langle \psi_{\ell, i}, V_{j}\rangle - F_{\widebar{\mu}^{V}_{k}}^{-1} \circ F_{\tilde{\varrho}_{\ell h}^{V}}(\langle \psi_{\ell, i}, V_{j} \rangle)\,)\,V_{j}$\\
  $\psi_{\ell+1, i} \leftarrow \psi_{\ell, i}+ v(\psi_{\ell, i}, \tilde{\mu}_{\ell h}^{L})\,h+ \sqrt{2\lambda h}\,Z_{\ell h}^{i}$
  }
  }
 \KWReturn{$\psi_{t+1}$}
\end{algorithm}

\section{Application to causal inference}\label{sec: causal}

In this section, we connect the model \eqref{eq:linear-RCM} to the causal random coefficient model introduced in \citet[Section 7]{heckman1997making}:
\begin{align}\label{eq:causal-RCM}
Y(w) = \langle Z, \beta_Z \rangle + w \cdot R + U, ~~~\text{where } (\beta_Z,R,U) \text{ is independent of } Z.
\end{align}
Here $Z \in \mathbb{R}^d$ represents pretreatment covariates, $w \in \mathcal{W} \subset \mathbb{R}$ is the (continuous or discrete) treatment, $U$ is an unspecified noise term, and $\{Y(w); w \in \mathcal{W} \}$ denotes the potential outcomes corresponding to different treatment levels. It follows that
\[
R = Y(w+1) - Y(w), \quad \text{for any } w \in \mathcal{W}.
\]
Thus, $R$ denotes the unit-level treatment effect. In the special case where $\mathcal{W} = \{0,1\}$, we obtain 
\[
R = Y(1) - Y(0),
\]
which represents the unit-level treatment effect of receiving the treatment compared to control. 

Notably speaking, we have made two key assumptions in Model \eqref{eq:causal-RCM}. First, we have assumed that the untreated potential outcome satisfies the random coefficient structure: $ Y(0) =  \langle Z, \beta_Z \rangle + U$.
Second, we have assumed that the unit-level treatment effect $R$ is independent of the observed covariates $Z$, though, it may be correlated with the unobserved noise $U$.


In practice, however, not all potential outcomes $\{Y(w); w \in \mathcal{W} \}$ are observable.	In the context of a {\it randomized controlled trial}, we introduce a random treatment variable $W \in \mathcal{W}$ and  only observe the outcome $Y : = Y(W)$, which corresponds to the potential outcome under the assigned treatment. The observed outcome follows the model:
\begin{align}\label{eq:causal-RCM-1}
Y(W) = \langle Z, \beta_Z \rangle + W \cdot R  + U, \quad \text{where } W \text{ is independent of } (Z,U,\beta_Z,R).
\end{align}
Clearly, Model \eqref{eq:causal-RCM-1} is a special case of \eqref{eq:linear-RCM}, with
\begin{align}\label{eq:causal-notation}
\beta = (\beta_Z, R, U), \quad X = (Z, W, 1).
\end{align}
This facilitates the estimation of $R$'s distribution in view of the results  established in previous sections. 
However, to make the arguments formal, we still need to verify Assumption \ref{assumption: unbounded covariates}, which is the focus of the following results.

\begin{assumption}\label{assumption: causal tail}
There exist constants $\kappa > 1$ and $C_{Z} > 0$ such that the Lebesgue density of $Z$, denoted by $f_Z$, satisfies
\begin{align*}
f_Z(T) \geq C_{Z}(1 + \Vert T \Vert_{2})^{-\kappa}, \quad \text{for all } T \in \mathbb{R}^d.
\end{align*}
\end{assumption}

The following proposition addresses the case of a continuous treatment $W$.

\begin{proposition}\label{prop: causal_heavy_tail} 
Assume Assumption \ref{assumption: causal tail}, and further assume that the Lebesgue density of $W$, denoted by $f_W$, satisfies
\[
f_W(t) \geq C_{W}(1 + |t|)^{-\kappa}, \quad \text{for any } t \in \mathbb{R}.
\]
Then, the random vector $X = (Z, W, 1)$ defined in \eqref{eq:causal-notation} satisfies
\[
f_{(Z, W)}(T, t)\geq C_{Z}C_{W} (1+\Vert(T, t)\Vert_{2})^{-2\kappa}.
\]
Thus, Assumption \ref{assumption: unbounded covariates} holds with $\alpha = 1/(2\kappa + 1)$, $\tau_{0} = \mathcal{S}(\mathbb{S}^{d-1})C_{Z}C_{W}/2^{6\kappa + 3}$, and $\rho_{0} = 1/4$, as established in Proposition \ref{prop:heavy-tail}.
\end{proposition}
This indicates that the Assumption \ref{assumption: unbounded covariates} holds for the case of continuous and unbounded treatment, which aligns with the support assumption regarding the identification in \citet[Theorem 1]{heckman1997making}. 

However, assuming a continuous and unbounded treatment may be restrictive in practice. To address this, we introduce a regularized \textit{working model}, analogous to ridge regression in linear models and entropic regularization in optimal transport:
\begin{align}\label{eq:working-model}
\text{(Regularized Working Model)} \quad Y_{\epsilon}(W) = \langle Z, \beta_Z \rangle + R \cdot W_{\epsilon} + U,
\end{align}
where
\[
W_{\epsilon} = W + \tilde{W}_{\epsilon}, \quad \tilde{W}_{\epsilon} \sim \text{Cauchy}(0, \epsilon) \text{ and independent of the system}.
\]

As $\epsilon \to 0$, the working model \eqref{eq:working-model} converges to the original causal random coefficient model \eqref{eq:causal-RCM-1}, ensuring that the estimated distribution of $R$ remains approximately unchanged. Meanwhile, the additional noise term $\tilde{W}_{\epsilon}$ enhances model flexibility.

The following proposition formalizes this idea.

\begin{proposition}\label{prop: causal_heavy_tail_2} 
Assume Assumption \ref{assumption: causal tail}, $\vert W \vert \leq M_{W}$ for constant $M_{W}$, and density $f_{W_{\epsilon}}$ exists. Then, for any given $\epsilon > 0$, the random vector $X_{\epsilon} = (Z, W_{\epsilon}, 1)$ satisfies
\[
f_{(Z, W_{\epsilon})}(T, t) \geq C_{Z} \pi^{-1}\epsilon\, \max\{1, \epsilon + M_{W}\}^{-2}(1+\Vert (T, t)\Vert_{2})^{-2\max\{2, \kappa\}}.
\]
Therefore, with $\alpha = 1/(2\max\{2, \kappa\} + 1)$, $\tau_{0} = \mathcal{S}(\mathbb{S}^{d-1})C_{Z}\epsilon / \pi \max\{1, \epsilon + M_{W}\}^{2}2^{6\max\{2, \kappa\} + 3}$, and $\rho_{0} = 1/4$, Assumption \ref{assumption: unbounded covariates} holds as established in Proposition \ref{prop:heavy-tail}.
\end{proposition}
Proofs are provided in Section \ref{subsection: proofs of causal inference} and empirical results for this causal approach are put in Section \ref{subsection: causal rcm}.

\section{Numerical experiments}\label{section: simulations}

\subsection{Simulation studies}

This section evaluates the performance of the algorithms using the folllowing numerical simulations. In the following, covariates are drawn from the von Mises-Fisher distribution with a concentration parameter $0.1$ and a mean direction $(d^{-1/2},\dots,d^{-1/2})$, satisfying Assumption \ref{assumption: bounded covariates}. We consider three types of underlying uniform distributions for the coefficient samples:
\[
\mu_{\text{sph}},~~ \mu_{\text{deg}},~~ {\rm and}~~ \mu_{\text{dis}}, 
\]
with supports of different dimensions and $R = 10$ as
\begin{gather*}
    \text{supp} \,\mu_{\text{sph}} = \Big\{\, U\in\mathbb{R}^{d}~\,\vert\, ~\Vert U - \delta R/2\, e_{2}\Vert_{2}\leq R/4,\,  \delta \in \{1, -1\}\,\Big\},\\
    \text{supp}\, \mu_{\text{deg}} =  \Big\{\, U\in\mathbb{R}^{d}~\,\vert\, ~\Vert U - \delta R/2\, e_{2}\Vert_{2}= R/4, \, \delta \in \{1, -1\}\,\Big\},\\
    \text{supp}\, \mu_{\text{dis}} = \Big\{\,U \in \mathbb{R}^{d}~\,\vert\, ~U = \delta R/2 \, e_{i},\, 1\leq i \leq d,\, \delta \in \{1, -1\}\,\Big\}. 
\end{gather*}

The simulations are performed on an 11th Gen Intel Core i7-11700 @ 2.50GHz CPU, and all reproducible codes are available at \url{https://github.com/keunwoolim/rcm_sw}. 


The performance of the algorithms is measured by computation time (in seconds) and the empirical distance (in $SW_{2}$ distance) between the original samples and the output, averaged over $100$ experiments. The empirical $SW_{2}$ distance using the Monte Carlo method is computed with the POT package \citep{flamary2021pot} with $100$ unit vector samples. 

For implementing Algorithms \ref{algorithm: bcd} and \ref{algorithm: abcd}, we perform $20$ iterations, setting $m = 50$ and $m = 1000$ respectively.  We set $k = \lceil{n^{d/(2d-1)}} \rceil$, which corresponds to the general $d\geq 6$ case in Theorem \ref{theorem: bounded estimator} and is empirically observed to be more robust than the suggested diverging rates in Theorem \ref{theorem: bounded estimator}. For Algorithm \ref{algorithm: flow}, we set $L = k$, $m = 50$, $t = 20$, $h = 1$, and $\lambda = 0.01$. The convex optimization problem for Algorithm \ref{algorithm: bcd} is solved using the CVXPY package \citep{diamond2016cvxpy}. 

Table \ref{table: simulation_dimension} reports the results of Algorithms \ref{algorithm: bcd}-\ref{algorithm: flow} for dimensions $2 \leq d \leq 5$ with $n = 500$ samples from $\mu_{\text{sph}}$. This demonstrates the efficiency of Algorithm \ref{algorithm: abcd}, as it is averagely $2.1$ times faster than Algorithm \ref{algorithm: bcd} while maintaining similar empirical accuracy. The approximation accuracy of Algorithm \ref{algorithm: flow} is lower but not substantially worse than the first two algorithms. This justifies its validity and also aligns with the purpose of generative modeling.   

\begin{table}[!ht]\renewcommand{\arraystretch}{1.1}
  \caption{Average performance of Algorithms \ref{algorithm: bcd} - \ref{algorithm: flow} with regard to the dimension}
\centering
  \begin{tabular}{ccccccc}
    \toprule
    \multirow{2}{*}{Dimension} &
      \multicolumn{2}{c}{Algorithm \ref{algorithm: bcd}} &
      \multicolumn{2}{c}{Algorithm \ref{algorithm: abcd}} & \multicolumn{2}{c}{Algorithm \ref{algorithm: flow}} \\
      & {Distance} & {Time} & {Distance} & {Time} & {Distance} & {Time} \\
      \midrule
    2 & 0.697 & 4.344 & 0.690 & 2.008 & 0.717 & 5.574\\
    3 & 0.762 & 2.935 & 0.752 & 1.366 & 0.808 & 2.580\\
    4 & 0.832 & 2.512 & 0.787 & 1.156 & 0.937 & 1.808\\
    5 & 0.943 & 2.271 & 0.910 & 1.061 & 1.057 & 1.528\\
    \bottomrule
  \end{tabular}
  \label{table: simulation_dimension}
\end{table}

Table \ref{table: simulation_sample size} reports the performance of Algorithm \ref{algorithm: abcd} for $n = 500, 1000, 1500, 2000$ samples from $\mu_{\text{sph}}$, $\mu_{\text{deg}}$, and $\mu_{\text{dis}}$, with $d = 2$. For all types of probability measures, the average empirical distance decreases as the sample size increases, while maintaining empirically robust computation time.

\begin{table}[!ht]\renewcommand{\arraystretch}{1.1}
  \caption{Average performance of Algorithm \ref{algorithm: abcd} with regard to the sample size}
\centering
  \begin{tabular}{ccccccc}
    \toprule
    \multirow{2}{*}{Sample Size} &
      \multicolumn{2}{c}{$\mu_{\text{sph}}$} &
      \multicolumn{2}{c}{$\mu_{\text{deg}}$} &
      \multicolumn{2}{c}{$\mu_{\text{dis}}$} \\
      & {Distance} & {Time} & {Distance} & {Time} & {Distance} & {Time} \\
      \midrule
    500 & 0.690 & 2.008 & 0.639 & 1.998 & 0.877 & 2.006 \\
    1000 & 0.543 & 3.162 & 0.462 & 3.218 & 0.805 & 3.170 \\
    1500 & 0.440 & 4.250 & 0.424 & 4.256 & 0.777 & 4.262 \\
    2000 & 0.375 & 4.982 & 0.373 & 5.035 & 0.725 & 5.060 \\
    \bottomrule
  \end{tabular}
  \label{table: simulation_sample size}
\end{table}

 
Lastly, Figure \ref{figure: introduction} plots the coefficient samples and outputs (i.e., $\hat w$'s introduced in \eqref{eq:points}) of Algorithms \ref{algorithm: bcd}-\ref{algorithm: flow} for $100$ experiments. For Algorithms \ref{algorithm: bcd} and \ref{algorithm: abcd}, we performed $20$ iterations with $n = 2000$ and $k = 159$, setting $m = 50$ and $m = 1000$ respectively. For Algorithm \ref{algorithm: flow}, the parameters are $L = 159$, $m = 10$, $t = 20$, $h = 1$, and $\lambda = 0.01$.

\begin{figure}[h]
    \centering\hspace{-0cm}
    \includegraphics[scale=0.6]{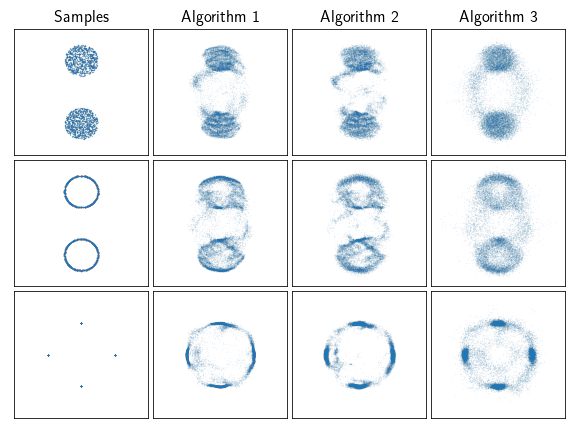}
    \caption{Coefficient samples and outputs of Algorithms \ref{algorithm: bcd}-\ref{algorithm: flow}.}
    \label{figure: introduction}
\end{figure}

Figure \ref{figure: generation} plots the outputs of $100$ experiments of Algorithm \ref{algorithm: flow} depending on $L$ and $\lambda$, with fixed parameters $n = 2000$, $m = 10$, $t = 20$, and $h = 1$. In Figure \ref{figure: generation_particles}, $\lambda$ is fixed as $0.01$, and $L$ varies as $10$, $20$, $40$, and $80$. Meanwhile, in Figure \ref{figure: generation_regularization}, $L$ is fixed as $20$, and $\lambda$ varies as $0.01$, $0.02$, $0.04$, and $0.08$. Intuitively, the particle systems approximation becomes more precise as $L$ increases, and the generated samples become more diverse as $\lambda$ increases.

\begin{figure}\hspace{-0.5cm}
     \begin{subfigure}[h]{0.5\textwidth}
         \centering
         \includegraphics[width=\linewidth]{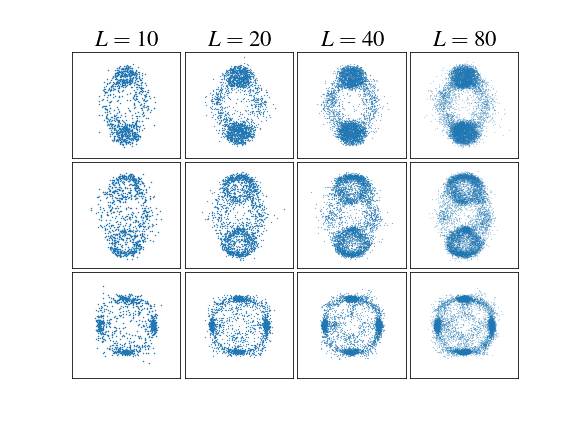}
         \caption{Varying number $L$ of particles.}
         \label{figure: generation_particles}
     \end{subfigure}
     \hfill
     \begin{subfigure}[h]{0.5\textwidth}
         \centering
         \includegraphics[width=\linewidth]{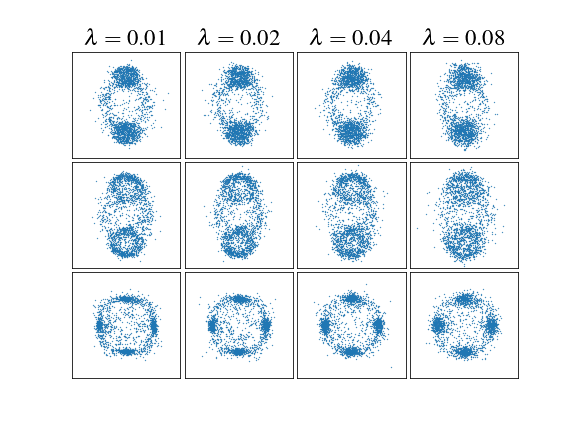}
         \caption{Varying regularization parameter $\lambda$.}
         \label{figure: generation_regularization}
     \end{subfigure}
        \caption{Outputs of Algorithm \ref{algorithm: flow} with varying numbers of particles and regularization parameter.}
        \label{figure: generation}
\end{figure}

\subsection{Causal RCM}\label{subsection: causal rcm}

Based on the regularized working model \eqref{eq:working-model} for causal inference, we analyze the AIDS Clinical Trials Group Study 175 (ACTG 175) data provided in the R \texttt{speff2trial} package \citep{speff2trial} to evaluate the treatment effects of therapy for human immunodeficiency virus type I. Specifically, using the monotherapy of zidovudine (Therapy 1) as the control group, we examine the effects of combination therapies—zidovudine and didanosine (Therapy 2) and zidovudine and zalcitabine (Therapy 3)—as well as monotherapy with didanosine (Therapy 4), as considered in \citet[Table 2]{hammer1996trial}. 

We analyze each investigational treatment (Therapies 2-4) separately against the control. In each analysis, the random treatment variable $W$ is assigned as $W = 0$ for the control group and $W = 1$ for the treatment group. The pretreatment covariates $Z$ include the baseline CD4 T cell count and the number of days of prior antiretroviral therapy, analogous to the setting in \citet[Section 6]{10.1093/biomet/asad045}. After standard normalization, this design produces three sets of covariate samples, each with $d = 4$ and respective sample sizes of $n = 1054, 1056, 1093$ for Therapies 2–4. The outcome variable $Y$ is the CD4 T cell count measured at $20 \pm 5$ weeks, which is also normalized.

We implemented a modified version of Algorithm \ref{algorithm: abcd} and conducted 100 experiments for each of the three cases. The parameter $k = \lceil{n^{d/(2d-1)}} \rceil$ was computed as $k = 54, 54, 55$, respectively, and additional parameters were set as $\epsilon = 0.005$, $R = 10$, and $m = 1000$, with 20 iterations.  

\begin{figure}[h]
    \centering
    \includegraphics[scale=0.72]{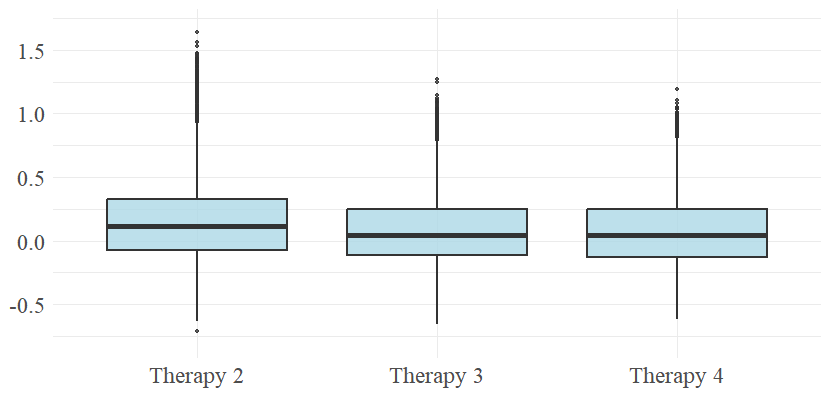}
    \caption{Treatment coefficient samples for Therapies 2–4.}
    \label{figure: causal}
\end{figure}

Figure \ref{figure: causal} presents the box plots for the estimated treatment coefficients corresponding to Therapies 2–4. The estimation results also show the estimated percentiles of the unit treatment effect. For example, the estimated median treatment effects for Therapies 2-4 are $0.113$, $0.047$, and $0.045$, respectively. These results suggest that all three therapies are effective compared to zidovudine monotherapy, with the combination of zidovudine and didanosine showing particularly notable effectiveness. These findings are generally consistent with the results of \cite{hammer1996trial}.

\appendix

\section{Proofs}\label{section: proofs}

\subsection{Proofs of Assumptions}
\begin{proof}[Proof of Proposition \ref{prop:heavy-tail}:] Introduce an associated random variable $\widebar{X} := (-1)^{\delta}\tilde{X}$, where $\delta \sim {\rm Bern}(1/2)$ is independent of $X$. Then, we prove that the density function $f_{\widebar{X}}$ of $\widebar{X}$ exists, and for all $0<\alpha\leq 1/(\kappa+1)$, there exists some constant $C_{\widebar{X}}>0$ such that 
\begin{align*}
    D(\tau) = \{\,\widebar{X}\in \mathbb{S}^{d-1}\, \vert \, \vert \langle \widebar{X}, e_{1}\rangle\vert\leq C_{\widebar{X}}\tau^{\alpha}\, \}\,\,\text{ and }\,\,\sigma(\{V\in\mathbb{S}^{d-1}\vert f_{\widebar{X}}(V)\leq \tau\} \backslash D(\tau)) = 0
\end{align*}
hold for all $0<\tau\leq \tau_{0}$. In particular, we give the proof with the parameters $\alpha = 1/(\kappa + 1)$, $\tau_{0} = \mathcal{S}(\mathbb{S}^{d-1})C_{f}/2^{3\kappa + 3}$, $\rho_{0} = 1/4$, and $C_{\widebar{X}} = 4/(\mathcal{S}(\mathbb{S}^{d-1})C_{f})^{\alpha}$.
    
\par\noindent\textbf{Step 1.} Denote the sets $\tilde{D}$ and $\tilde{D}(\tau)$ with the vector $\tilde{T}\in\mathbb{S}^{d-1}$ as
\begin{align*}
    \tilde{D} = \{\,\tilde{T}\in \mathbb{S}^{d-1}\, \vert \, \langle \tilde{T}, e_{1}\rangle> 0\, \} \,\,\text{ and }\,\, \tilde{D}(\tau) = \{\,\tilde{T}\in \mathbb{S}^{d-1}\, \vert \, \langle \tilde{T}, e_{1}\rangle> C_{\widebar{X}}\tau^{\alpha}\, \}
\end{align*}
and let the vector $\tilde{T} = (\tilde{T}_{1}, \tilde{T}_{2:d})$ with $\tilde{T}_{2:d}\in \mathbb{R}^{d-1}$. In particular, there exists one to one correspondence between $\tilde{T}\in\tilde{D}$ and $\tilde{T}_{2:d}\in\tilde{A}$ for
\begin{align*}
    \tilde{A} = \{\,\tilde{T}_{2:d}\in \mathbb{R}^{d-1}\,\vert \,\Vert \tilde{T}_{2:d}\Vert_{2}<1\,\}. 
\end{align*}
Then, define the injective function $H\colon \mathbb{R}^{d-1}\to \tilde{A}$ and its inverse $H^{-1}\colon \tilde{A}\to \mathbb{R}^{d-1}$ as
    \begin{align*}
        \tilde{T}_{2:d} = H(T_{2:d}) = T_{2:d} / (1+ \Vert T_{2:d}\Vert_{2}^{2})^{1/2}\,\,\, \text{ and } \,\,\, T_{2:d} = H^{-1}(\tilde{T}_{2:d}) = \tilde{T}_{2:d} / (1- \Vert \tilde{T}_{2:d}\Vert_{2}^{2})^{1/2}, 
    \end{align*}
where we denote $T = (1, T_{2:d})$ on the domain of $X$. Without loss of generality, we focus on variables with restricted domain as $X_{2:d}\in\mathbb{R}^{d-1}$, $\widebar{X}_{2:d}\in\tilde{A}$, and $\delta = 0$.

\par\noindent\textbf{Step 2.} Denote the Jacobian of $H^{-1}$ as $J_{H^{-1}}$ and $I$ as the identity matrix. We have that the determinant of $J_{H^{-1}}$ at $\tilde{T}_{2:d}$ is lower bounded as
    \begin{align*}
        \det J_{H^{-1}}(\tilde{T}_{2:d}) = \det((1- \Vert \tilde{T}_{2:d}\Vert_{2}^{2})^{-\frac{3}{2}}\tilde{T}_{2:d}\,\tilde{T}_{2:d}^{\top}+(1- \Vert \tilde{T}_{2:d}\Vert_{2}^{2})^{-\frac{1}{2}}I)\geq (1- \Vert \tilde{T}_{2:d}\Vert_{2}^{2})^{-\frac{d-1}{2}}\geq 1,
    \end{align*}
    by Minkowski's determinant theorem as in \cite{marcus1971extension}. 
    Since $\widebar{X}_{2:d} = H(X_{2:d})$, $\widebar{X}_{2:d}$ is a random variable on $\tilde{A}$ with density function $f_{\widebar{X}_{2:d}}$ such that 
    \begin{align*}
        f_{\widebar{X}_{2:d}}(\tilde{T}_{2:d}) = f_{X_{2:d}}(T_{2:d})\vert \det J_{H^{-1}}(\tilde{T}_{2:d})\vert / 2 \geq f_{X_{2:d}}(T_{2:d}) / 2\geq C_{f}/2 (1+ \Vert T_{2:d} \Vert_{2})^{\kappa}.  
    \end{align*} 
    \par\noindent\textbf{Step 3.} Now, we derive the lower bound of the density function $f_{\widebar{X}}\vert_{\tilde{D}(\tau)}$ restricted on the set $\tilde{D}(\tau)$, where the existence of $f_{\widebar{X}}$ is directly obtained by the Radon-Nikodym theorem. Consider the Borel set $\tilde{K}\subset\tilde{D}(\tau)$ with area $\mathcal{S}(\tilde{K})$ and corresponding set $K\subset \mathbb{R}^{d}$ such that
    \begin{align*}
        K = \Big\{\,(0, \tilde{T}_{2:d})\in \mathbb{R}^{d}\,\vert\, \tilde{T} = (\tilde{T}_{1}, \tilde{T}_{2:d})\in \tilde{K}\,\Big\}
    \end{align*}
    with area $\mathcal{S}(K)$, and let the set $\tilde{A}(\tau)$ be
    \begin{align*}
        \tilde{A}(\tau) = \{\,\tilde{T}_{2:d}\in \mathbb{R}^{d-1}\,\vert\, \Vert \tilde{T}_{2:d}\Vert_{2}<(1 - C_{\widebar{X}}^{2}\tau^{2\alpha})^{1/2}\,\}.
    \end{align*}
    Then, denoting the function $l\colon \tilde{A}(\tau)\to \mathbb{R}$ as
    \begin{align*}
        l(\tilde{T}_{2:d}) = (1 - \Vert \tilde{T}_{2:d}\Vert_{2}^{2})^{1/2}, 
    \end{align*}
    by direct computation we have 
    \begin{align*}
        (1+ \Vert \nabla l(\tilde{T}_{2:d})\Vert_{2}^{2})^{1/2} = (1 - \Vert \tilde{T}_{2:d}\Vert_{2}^{2})^{-1/2}\leq C_{\widebar{X}}^{-1}\tau^{-\alpha},
    \end{align*}
    which leads to the inequality
    \begin{align*}
        \int_{\tilde{K}}& f_{\widebar{X}}\vert_{\tilde{D}(\tau)}\, \dd \sigma  = \int_{K} f_{\widebar{X}_{2:d}}\, \dd S \geq \frac{C_{f}}{2}\big(\,1+\frac{(1 - C_{\widebar{X}}^{2}\tau^{2\alpha})^{1/2}}{C_{\widebar{X}}\tau^{\alpha}}\,\big)^{-\kappa} \mathcal{S}(K)\\
        &\geq \frac{C_{f}C_{\widebar{X}}\tau^{\alpha}}{2}\big(\,1+\frac{(1 - C_{\widebar{X}}^{2}\tau^{2\alpha})^{1/2}}{C_{\widebar{X}}\tau^{\alpha}}\,\big)^{-\kappa} \mathcal{S}(\tilde{K}) = \mathcal{S}(\mathbb{S}^{d-1}) \int_{\tilde{K}}\frac{C_{f}C_{\widebar{X}}\tau^{\alpha}}{2}\big(\,1+\frac{(1 - C_{\widebar{X}}^{2}\tau^{2\alpha})^{1/2}}{C_{\widebar{X}}\tau^{\alpha}}\,\big)^{-\kappa}\dd \sigma.
    \end{align*}
    Since this inequality holds for any Borel set $\tilde{K}\subset \tilde{D}(\tau)$, applying the Radon-Nikodym Theorem again, we obtain that
    \begin{align*}
        f_{\widebar{X}}\vert_{\tilde{D}(\tau)}\geq \frac{\mathcal{S}(\mathbb{S}^{d-1})C_{f} C_{\widebar{X}}\tau^{\alpha}}{2}\big(\,1+\frac{(1 - C_{\widebar{X}}^{2}\tau^{2\alpha})^{1/2}}{C_{\widebar{X}}\tau^{\alpha}}\,\big)^{-\kappa}
    \end{align*}
    holds almost everywhere. Finally, constant $C_{\widebar{X}} = 4/(\mathcal{S}(\mathbb{S}^{d-1})C_{f})^{\alpha}$ satisfies the inequality
    \begin{align*}
        \frac{\mathcal{S}(\mathbb{S}^{d-1})C_{f} C_{\widebar{X}}\tau^{\alpha}}{2}\Big(\,1+\frac{(1 - C_{\widebar{X}}^{2}\tau^{2\alpha})^{1/2}}{C_{\widebar{X}}\tau^{\alpha}}\,\Big)^{-\kappa}&\geq \frac{\mathcal{S}(\mathbb{S}^{d-1})C_{f} C_{\widebar{X}}\tau^{\alpha}}{2}(1+C_{\widebar{X}}^{-1}\tau^{-\alpha})^{-\kappa}\\ 
        &\geq 2^{-\kappa}\mathcal{S}(\mathbb{S}^{d-1})C_{f} C_{\widebar{X}}\tau^{\alpha}(1+ C_{\widebar{X}}^{-\kappa}\tau^{-\alpha\kappa})^{-1}>\tau
    \end{align*}
    for all $0<\tau\leq \tau_{0}$, and the proof is thus complete. 
\end{proof}

\subsection{Proofs of results concerning estimation}\label{subsection: proofs of estimation}

\begin{proof}[Proof of Theorem \ref{proposition: convex}:]
Recall that
    \begin{align*}
        \mathcal{F}_{k}\circ \eta(w) = \int_{\mathbb{S}^{d-1}} W_{2}^{2}(\widebar{\mu}_{k}^{V}, \eta(w)^{V})\dd \sigma(V),
    \end{align*}
with $w = (w_{1},\dots,w_{N})$ and $w_{i} = (w_{i,1},\dots,w_{i,d})$. For two points $w, \tilde{w}\in \widebar{B}_{R}(0)^{N}$ with $\tilde{w} =(\tilde{w}_{1},\dots,\tilde{w}_{N})$, by the triangle inequality
    \begin{align*}
        \vert \mathcal{F}_{k}\circ &\eta(w) - \mathcal{F}_{k}\circ \eta(\tilde{w})\vert \leq \int_{\mathbb{S}^{d-1}} \vert W_{2}^{2}(\widebar{\mu}_{k}^{V}, \eta(w)^{V})-W_{2}^{2}(\widebar{\mu}_{k}^{V}, \eta(\tilde{w})^{V})\vert \dd \sigma(V)\\
        &\leq 4R \int_{\mathbb{S}^{d-1}} \vert W_{2}(\widebar{\mu}_{k}^{V}, \eta(w)^{V})-W_{2}(\widebar{\mu}_{k}^{V}, \eta(\tilde{w})^{V})\vert \dd \sigma(V)\leq 4R \int_{\mathbb{S}^{d-1}} W_{2}(\eta(w)^{V}, \eta(\tilde{w})^{V})\dd \sigma(V).
    \end{align*}
Additionally, we have
    \begin{align*}
        W_{2}^{2}(\eta(w)^{V}, \eta(\tilde{w})^{V})\leq N^{-1}\sum_{i = 1}^{N}(\langle w_{i}, V \rangle - \langle\tilde{w}_{i}, V \rangle)^{2} \leq N^{-1} \sum_{i = 1}^{N}\Vert w_{i} - \tilde{w}_{i}\Vert^{2}= N^{-1}\Vert w - \tilde{w}\Vert^{2}
    \end{align*}
    holds by definition of the $W_{2}$ distance, which yields that $\mathcal{F}_{k}\circ \eta$ is $4RN^{-1/2}$-Lipschitz continuous. Since the function $\mathcal{F}_{k}\circ \eta$ is continuous on the compact domain $\widebar{B}_{R}(0)^{N}$, there then exists global minimizer[s] \cite[Theorem 4.15]{rudin1964principles}. In the case of $N = k = n$, $\hat{\mu}_{\beta} = n^{-1}\sum_{i = 1}^{n}\delta_{\tilde{Y}_{i}\tilde{X}_{i}}$ is the unique global minimizer of \eqref{eq:mde}.
\end{proof}

\begin{proof}[Proof of Theorem \ref{theorem: bounded estimator}:]

By Theorem \ref{proposition: convex}, there exist global minimizer[s] of $\mathcal{F}_{k}\circ \eta$, and the existence of the estimator $\hat{\mu}_{\beta}$ is directly obtained. In the following steps, we prove the convergence of $SW_{2}$ distance based on the convergence of each $W_{2}$ distance between projected measures. 

\par\noindent\textbf{Step 1.} First, we analyze the bound of $SW_{2}$ distance with the minimum distance property. Introduce $N$ independent copies of $\beta$ as $\tilde{\beta}_{1},\dots,\tilde{\beta}_{N}$ and the corresponding empirical measure $\mu_{N}\in \mathcal{P}_{N}(\widebar{B}_{R}(0))$ as $\mu_{N} = N^{-1}\sum_{i = 1}^{N}\delta_{\tilde{\beta}_{i}}$. 

Then, by the minimum distance property of $\hat{\mu}_{\beta}$ on $\mathcal{P}_{N}(\widebar{B}_{R}(0))$, we have
\begin{align*}
    \int_{\mathbb{S}^{d-1}} W_{2}^{2}(\widebar{\mu}_{k}^{V}, \hat{\mu}_{\beta}^{V})\dd \sigma(V) \leq \int_{\mathbb{S}^{d-1}} W_{2}^{2}(\widebar{\mu}_{k}^{V}, \mu_{N}^{V})\dd \sigma(V),
\end{align*}
which leads to the bound of the $SW_{2}$ distance
\begin{align}
    SW_{2}^{2}(\mu_{\beta}, \hat{\mu}_{\beta}) &= \int_{\mathbb{S}^{d-1}} W_{2}^{2}(\mu_{\beta}^{V}, \hat{\mu}_{\beta}^{V})\dd \sigma(V)\notag\\
    &\leq 2\int_{\mathbb{S}^{d-1}}W_{2}^{2}(\mu_{\beta}^{V}, \widebar{\mu}_{k}^{V})\dd \sigma(V) +  2\int_{\mathbb{S}^{d-1}} W_{2}^{2}(\widebar{\mu}_{k}^{V}, \hat{\mu}_{\beta}^{V})\dd \sigma(V)\nonumber\\
    &\leq 2\int_{\mathbb{S}^{d-1}}W_{2}^{2}(\mu_{\beta}^{V}, \widebar{\mu}_{k}^{V})\dd \sigma(V) +  2\int_{\mathbb{S}^{d-1}} W_{2}^{2}(\widebar{\mu}_{k}^{V}, \mu_{N}^{V})\dd \sigma(V)\nonumber\\
    &\leq 6 \int_{\mathbb{S}^{d-1}}W_{2}^{2}(\mu_{\beta}^{V}, \widebar{\mu}_{k}^{V})\dd \sigma(V) +  4\int_{\mathbb{S}^{d-1}} W_{2}^{2}(\mu_{\beta}^{V}, \mu_{N}^{V})\dd \sigma(V).\label{eq: SW bound}
\end{align}
Regarding the second term in \eqref{eq: SW bound}, since $\mu_{N}^{V}$ corresponds to the empirical measure of $\mu_{\beta}^{V}$, \citet[Theorem 1]{Fournier2015} directly implies the existence of some constant $C = C(R, d)$ such that
    \begin{align*}
    \begin{cases}
        \mathbb{E} W_{2}^{2}(\mu_{\beta}^{V}, \mu_{N}^{V})\leq C N^{-1/3} &\text{ for } d\leq 5,\\
        \mathbb{E} W_{2}^{2}(\mu_{\beta}^{V}, \mu_{N}^{V})  \leq C N^{-2/d}&\text{ for } d\geq 6.
    \end{cases}
    \end{align*}

\par\noindent\textbf{Step 2.} We then derive the bound of the first term in \eqref{eq: SW bound}. To this end, let's introduce an oracle NN-induced random measure as
\begin{align*}
    \tilde{\mu}^{V}_{k} := \frac{1}{k} \sum_{i\in S_{X}(V,k)} \delta_{P^{V}(\beta_{i})}.
\end{align*}
The first term in \eqref{eq: SW bound} can then be upper bounded as
\begin{align}\label{eq: SW bound_1}
    \int_{\mathbb{S}^{d-1}}\mathbb{E}W_{2}^{2}(\mu_{\beta}^{V}, \widebar{\mu}_{k}^{V})\dd \sigma(V) \leq 2\int_{\mathbb{S}^{d-1}}\mathbb{E} W_{2}^{2}(\mu_{\beta}^{V}, \tilde{\mu}_{k}^{V})\dd \sigma (V)+2\int_{\mathbb{S}^{d-1}}\mathbb{E}W_{2}^{2}(\tilde{\mu}_{k}^{V}, \widebar{\mu}_{k}^{V})\dd \sigma(V). 
\end{align}
For the first term in \eqref{eq: SW bound_1}, by independence between $\beta$ and $X$, 
\citet[Theorem 1]{Fournier2015} can still be used to show that
\begin{align*}
    \begin{cases}
        \mathbb{E}W_{2}^{2}(\mu_{\beta}^{V},\tilde{\mu}_{k}^{V}) \leq C k^{-1/3}, &\text{ for } d\leq 5,\\
        \mathbb{E}W_{2}^{2}(\mu_{\beta}^{V},\tilde{\mu}_{k}^{V}) \leq C k^{-2/d},&\text{ for } d\geq 6
    \end{cases}
    \end{align*}
holds for some constant $C = C(R, d)$.

It remains to upper bound the second term in \eqref{eq: SW bound_1}. To this end, let's denote the set $S(V, \gamma)$ and its range bound $\gamma(V, k)$ for $V\in\mathbb{S}^{d-1}$ and $0 \leq \gamma\leq \sqrt{2}$ as 
\begin{align*}
    S(V, \gamma) &= \{\,\tilde{X}\in\mathbb{S}^{d-1}\,\vert\, \langle V, \tilde{X}\rangle \geq 1 - \gamma^{2}\,\}\\
{\rm and}~~~    \gamma(V, k) &= \min\{\,\gamma\colon \{\tilde{X}_{i}\,\vert\, i \in S_{X}(V, k)\} \subset S(V, \gamma)\,\}.
\end{align*}
By Lemma \ref{lemma: vector}, we obtain an upper bound of $P^{V}(\beta_{i})$ as
\begin{align*}
    \vert P^{V}(\beta_{i}) - P^{V}(\tilde{Y}_{i}\tilde{X}_{i})\vert =  \vert \langle \beta_{i}, V\rangle - \langle \beta_{i}, \tilde{X}_{i}\rangle \langle \tilde{X}_{i}, V\rangle \vert \leq C \gamma(V, k)
\end{align*}
with constant $C = C(R)>0$, which leads to the $W_{2}$ bound
\begin{align*}
    W_{2}(\tilde{\mu}_{k}^{V}, \widebar{\mu}_{k}^{V})\leq C\gamma(V, k) 
\end{align*}
by the definition of $W_{2}$ distance, uniformly on $V$ and $k$. 

\par\noindent\textbf{Step 3, Case I.} Under Assumption \ref{assumption: bounded covariates}, by Lemma \ref{lemma: expectation},     
\begin{align*}
    \int_{\mathbb{S}^{d-1}}\mathbb{E}W_{2}^{2}(&\tilde{\mu}_{k}^{V}, \widebar{\mu}_{k}^{V})\dd \sigma(V)\leq C(k/n)^{\frac{2}{d-1}} 
\end{align*}
 holds with some constant $C = C(R, d,\tau_{0})$, where $\ind(\cdot)$ is the indicator function. Combining all the results and letting $N = k = n^{6/(d+5)}$ for $2\leq d \leq 5$ and $N = k = n^{d/(2d-1)}$ for $d \geq 6$, we have the $SW_{2}$ bound 
\begin{align*}
\begin{cases}
    \mathbb{E}[SW_{2}^{2}(\mu_{\beta}, \hat{\mu}_{\beta})] \leq C n^{-2/(d+5)} &\text{ for } d\leq 5,\\
    \mathbb{E}[SW_{2}^{2}(\mu_{\beta}, \hat{\mu}_{\hat{\beta}})] \leq C n^{-2/(2d-1)}&\text{ for }d\geq 6.
\end{cases}
\end{align*}
By Jensen's inequality, 
\begin{align*}
    \mathbb{E}[SW_{2}(\mu_{\beta}, \hat{\mu}_{\beta})]\leq \mathbb{E}[SW_{2}^{2}(\mu_{\beta}, \hat{\mu}_{\beta})]^{\frac{1}{2}}
\end{align*}
holds, and the proof of the first case is then complete. 

\vspace{0.2cm}
\par\noindent\textbf{Step 3, Case II.} Under Assumption \ref{assumption: unbounded covariates}, by Lemma \ref{lemma: expectation}, 
\begin{align*}
    \int_{\mathbb{S}^{d-1}}\mathbb{E}W_{2}^{2}&(\tilde{\mu}_{k}^{V}, \widebar{\mu}_{k}^{V})\dd \sigma(V)\leq C((k/n\tau)^{\frac{2}{d-1}}+\tau^{\alpha}+\rho),
\end{align*}
with some constant $C = C(R, d, \tau_{0}, \rho_{0}, C_{\tilde{X}})$. Lastly, choosing
\begin{align*}
    N = k = n^{\frac{6\alpha}{d\alpha+5\alpha+2}},\,\,\, \tau = \tau_{0}n^{-\frac{2}{d\alpha+5\alpha+2}},\,\,\,\text{and}\,\,\,\rho = \rho_{0}n^{-\frac{\alpha}{d\alpha+5\alpha+2}}\log n,
\end{align*}
we obtain the bound for $2 \leq d \leq 5$ by direct computation
\begin{align*}
    \mathbb{E}[SW_{2}^{2}(\mu_{\beta}, \hat{\mu}_{\beta})] \leq C n^{-\frac{\alpha}{d\alpha+5\alpha+2}}\log n,
\end{align*}
with some constant $C = C(R, d,\tau_{0},\rho_{0}, C_{\tilde{X}})$. In the final case of $d \geq 6$, choosing
\begin{align*}
    N = k = n^{\frac{d\alpha}{2d\alpha-\alpha+2}},\,\,\, \tau = \tau_{0}n^{-\frac{2}{2d\alpha-\alpha+2}},\,\,\, \text{and} \,\,\,\rho = \rho_{0}n^{-\frac{\alpha}{2d\alpha-\alpha+2}}\log n,
\end{align*}
we obtain the bound
\begin{align*}
    \mathbb{E}[SW_{2}^{2}(\mu_{\beta}, \hat{\mu}_{\beta})] \leq C n^{-\frac{\alpha}{2d\alpha-\alpha+2}}\log n
\end{align*}
with constant $C = C(R, d,\tau_{0}, \rho_{0}, C_{\tilde{X}})$. This completes the proof.
\end{proof}

\subsection{Proofs of results concerning computation}\label{subsection: proofs of computation}

\begin{proof}[Proof of Proposition \ref{proposition: discrete continuity}:]
We begin with a proof of the Lipschitz continuity of the function $W_{2}^{2}(\widebar{\mu}_{k}^{V}, \eta(w)^{V})$ in $w$. For any vector $V\in\mathbb{S}^{d-1}$ and two points $w = (w_{1},\dots, w_{k})$ and $\tilde{w} = (\tilde{w}_{1},\dots,\tilde{w}_{k})$ on $\widebar{B}_{R}(0)^{N}$,  we have
\begin{align}\label{eq: lipschitz wasserstein}
    \Big\vert W_{2}^{2}(\widebar{\mu}_{k}^{V}, \eta(w)^{V}) - W_{2}^{2}(\widebar{\mu}_{k}^{V}, \eta(\tilde{w})^{V})\Big\vert &\leq 4R \,\Big\vert W_{2}(\widebar{\mu}_{k}^{V}, \eta(w)^{V}) - W_{2}(\widebar{\mu}_{k}^{V}, \eta(\tilde{w})^{V}) \Big\vert \notag\\
    &\leq 4R\, W_{2}(\eta(w)^{V}, \eta(\tilde{w})^{V}).
    \end{align}
In addition, by definition of the $W_{2}$ distance,
\begin{align*}
     W_{2}^{2}(\eta(w)^{V}, \eta(\tilde{w})^{V})\leq \frac{1}{N}\sum_{i = 1}^{N}\langle w_{i} - \tilde{w}_{i}, V\rangle^{2} \leq \frac{1}{N}\sum_{i = 1}^{N}\Vert w_{i} - \tilde{w}_{i}\Vert_{2}^{2} = N^{-1}\Vert w- \tilde{w}\Vert_{2}^{2}, 
\end{align*}
which leads to the $4RN^{-1/2}$-Lipschitz continuity of the function $W_{2}^{2}(\widebar{\mu}_{k}^{V}, \eta(w)^{V})$. 

Then, the $4RN^{-1/2}$-Lipschitz continuity of the function $\widebar{\mathcal{F}}_{k}(\eta(w))$ is directly derived by its definition as again, for any two vectors $w$ and $\tilde{w}$,
\begin{align*}
    \Big\vert \widebar{\mathcal{F}}_{k}(\eta(w)) - \widebar{\mathcal{F}}_{k}(\eta(\tilde{w})) \Big\vert \leq \frac{1}{m}\sum_{i = 1}^{m}\Big\vert W_{2}^{2}(\widebar{\mu}_{k}^{V_{i}}, \eta(w)^{V_{i}}) - W_{2}^{2}(\widebar{\mu}_{k}^{V_{i}}, \eta(\tilde{w})^{V_{i}})\Big\vert \leq 4RN^{-1/2}\Vert w - \tilde{w} \Vert_{2} 
\end{align*}
holds. Since the function $\widebar{\mathcal{F}}_{k}(\eta(w))$ is continuous on the compact domain $\widebar{B}_{R}(0)^{N}$, there exists a minimizer. For $N = k = n$, $n^{-1}\sum_{i = 1}^{n}\delta_{\tilde{Y}_{i}\tilde{X}_{i}}$ is the unique global minimizer of \eqref{eq:approximate-mde}.
\end{proof}

\begin{proof}[Proof of Proposition \ref{proposition: uniform convergence}:]

This proof is a generalization of the proof of \citet[Theorems 2.3 and 2.4]{tanguy2024properties}.   

\par\noindent\textbf{Step 1.} Regarding the first statement, we first establish results on a countable dense subset $H$ of the domain $\widebar{B}_{R}(0)^{N}$, and then extend these analyses to the entire domain. Denote $H$ as the set of vectors in $\widebar{B}_{R}(0)^{N}$ with elements in the set of rational numbers $\mathbb{Q}$:
\begin{align}\label{eq: rational number set}
    H = \big\{\,w \in \widebar{B}_{R}(0)^{N}\,\vert\,w_{i, j}\in \mathbb{Q}, 1 \leq i\leq N, 1\leq j \leq d \,\big\},     
\end{align}
which is a countable dense subset of $\widebar{B}_{R}(0)^{N}$. Then, for arbitrary fixed $w \in \widebar{B}_{R}(0)^{N}$, we have almost sure convergence
\begin{align*}
    \mathbb{P}_{\sigma}\,\big(\,\lim_{m\to \infty}\widebar{\mathcal{F}}_{k}(\eta(w)) = \mathcal{F}_{k}(\eta(w)\, \big) = 1, 
\end{align*}
by the strong law of large numbers. Since the set $H$ is countable, this directly leads to the almost sure convergence on the set $H$ uniformly as
\begin{align*}
    \mathbb{P}_{\sigma}\,\big(\,\lim_{m\to \infty}\widebar{\mathcal{F}}_{k}(\eta(w)) = \mathcal{F}_{k}(\eta(w)),\,\,\forall w \in H\, \big) = 1. 
\end{align*}  

\par\noindent\textbf{Step 2.} 
Since the function $\widebar{\mathcal{F}}_{k}(\eta(w))$ is $4RN^{-1/2}$-Lipschitz continuous by Proposition \ref{proposition: discrete continuity}, the sequence $(\widebar{\mathcal{F}}_{k}(\eta(w)))_{m\in\mathbb{N}}$ is uniformly equicontinuous. Then, by \citet[Proposition 3.2]{hirsch2012elements}, since the equicontinuous sequence $(\widebar{\mathcal{F}}_{k}(\eta(w)))_{m\in\mathbb{N}}$ converges to $\mathcal{F}_{k}(\eta(w))$ on the dense set $H$, the sequence converges uniformly to the continuous function $\mathcal{F}_{k}(\eta(w))$. It leads to the final convergence result
\begin{align*}
    \mathbb{P}_{\sigma}\big(\,\lim_{m\to\infty}\Vert \widebar{\mathcal{F}}_{k}(\eta(w)) - \mathcal{F}_{k}(\eta(w)) \Vert_{\infty} = 0\,\big) = 1
\end{align*}
and the proof of the first statement is complete. 
\par\noindent\textbf{Step 3.} For the second statement, we derive the $\sigma$-Donsker class by the Lipschitz continuity \cite[Example 19.7]{van2000asymptotic}, then prove the main result with the continuous mapping theorem \cite[Theorem 18.11]{van2000asymptotic}. Denote random variables $\sigma_{m} f$ and $\sigma f$ for the function $f\colon \mathbb{S}^{d-1}\to \mathbb{R}$ as
\begin{align*}
    \sigma_{m} f := \frac{1}{m}\sum_{j = 1}^{m}f(V_{i}) \,\,\, \text{ and }\,\,\, \sigma f := \int_{\mathbb{S}^{d-1}}f(V) \dd \sigma(V),
\end{align*}
where $V_{1},\dots,V_{m}$ are independently sampled from the Haar measure $\sigma$. In addition, consider the random variable $\mathbb{G}_{m}f$ and set $\mathbb{G}_{m}$ for the function class $\mathcal{T}$ as  
\begin{align*}
    \mathbb{G}_{m} = \{\,\mathbb{G}_{m}f\, \vert\, f\in \mathcal{T} \,\} \,\,\,\text{ with }\,\,\, \mathbb{G}_{m}f = \sqrt{m}(\sigma_{m} f - \sigma f). 
\end{align*}
Let $\mathcal{T}$ be a class of functions taking unit vector variables and return projected Wasserstein distances, such that 
\begin{align*}
    \mathcal{T} = \Big\{\, f_{w}\,\vert\, f_{w}\colon \mathbb{S}^{d-1}\to \mathbb{R},\, w\in \widebar{B}_{R}(0)^{N},\, f_{w}(V) =  W_{2}^{2}(\widebar{\mu}_{k}^{V}, \eta(w)^{V})\,\Big\}.  
\end{align*}
For two arbitrary vectors $w, \tilde{w}\in \widebar{B}_{R}(0)^{N}$, we have the relation between $f_{w}$ and $f_{\tilde{w}}$ as
\begin{align*}
    \vert f_{w}(V) - f_{\tilde{w}}(V)\vert 
    \leq 4R \,\vert  W_{2}(\widebar{\mu}_{k}^{V}, \eta(w)^{V}) - W_{2}(\widebar{\mu}_{k}^{V}, \eta(\tilde{w})^{V}) \vert
    \leq 4RN^{-1/2} \Vert w- \tilde{w}\Vert_{2}
\end{align*}
identical to the derivation of \eqref{eq: lipschitz wasserstein}. Then, $\mathcal{T}$ satisfies the conditions of \citet[Example 19.7]{van2000asymptotic}, which yields that $\mathcal{T}$ is $\sigma$-Donsker and the process $\mathbb{G}_{m}$ converges in distribution to the $\sigma$-Brownian bridge $\mathbb{G}_{\sigma}$ in the space $\ell^{\infty}(\mathcal{T})$ equipped with the supremum norm.

\par\noindent\textbf{Step 4.} Denote the space of bounded functions on $\widebar{B}_{R}(0)^{N}$ as $\ell^{\infty}(\widebar{B}_{R}(0)^{N})$ with supremum norm $\Vert \cdot \Vert_{\ell^{\infty}(\widebar{B}_{R}(0)^{N})}$, and let the linear function $\Psi\colon \ell^{\infty}(\mathcal{T}) \to \ell^{\infty}(\widebar{B}_{R}(0)^{N})$ as
\begin{align*}
    \Psi(s)(w) = s(f_{w})\,\,\,\text{ with }\,\,\, s \in \ell^{\infty}(\mathcal{T}),\, f_{w}\in \ell^{\infty}(\widebar{B}_{R}(0)^{N}), 
\end{align*}
which preserves the norm structure from each space and is continuous. Then, by the continuous mapping theorem, viewing the process $\sqrt{m}(\widebar{\mathcal{F}}_{k}(\eta(w)) - \mathcal{F}_{k}(\eta(w))$ as the function in $\ell^{\infty}(\widebar{B}_{R}(0)^{N})$, it converges in distribution to the centered Gaussian process $\Psi(\mathbb{G}_{\sigma})$ on $\widebar{B}_{R}(0)^{k}$ that satisfies
\begin{align*}
    \text{cov}\Psi(\mathbb{G}_{\sigma})(w, \tilde{w})= \int_{V \in \mathbb{S}^{d-1}} W_{2}^{2}(\widebar{\mu}_{k}^{V}, \eta(w)^{V})W_{2}^{2}(\widebar{\mu}_{k}^{V}, \eta(\tilde{w})^{V})\dd \sigma(V) - \mathcal{F}_{k}(\eta(w))\mathcal{F}_{k}(\eta(\tilde{w})). 
\end{align*}
Finally, applying the continuous mapping theorem again for the infinite norm of $\ell^{\infty}(\widebar{B}_{R}(0)^{N})$, we obtain the uniform convergence result that 
$\sqrt{m}\Vert\widebar{\mathcal{F}}_{k}(\eta(w)) - \mathcal{F}_{k}(\eta(w))\Vert_{\ell^{\infty}(\widebar{B}_{R}(0)^{N})}$ converges in distribution to $\Vert \Psi(\mathbb{G}_{\sigma})\Vert_{\ell^{\infty}(\widebar{B}_{R}(0)^{N})}$. Setting $\mathbb{G} = \Psi(\mathbb{G}_{\sigma})$ completes the proof of the second statement is complete.   
\end{proof}

\begin{proof}[Proof of Theorem \ref{theorem: computation}:]
We first prove the convergence of Algorithm \ref{algorithm: bcd} to the local minima of $\widebar{\mathcal{F}}_{k}(\eta(w))$. We then prove the validity of Algorithm \ref{algorithm: abcd} by establishing the convergence of matrix $L_{m} = \sum_{j = 1}^{m}V_{j}V_{j}^{\top}$, analyzed on Proposition \ref{proposition: algorithm error expectation}.

\par\noindent\textbf{Step 1.} By Lemma \ref{lemma: Bobkov}, the $W_{2}$ distance between $\widebar{\mu}_{k}^{V_{j}}$ and $\eta(w)^{V_{j}}$ can be reformulated as
\begin{align*}
    W_{2}^{2}(\widebar{\mu}_{k}^{V_{j}}, \eta(w)^{V_{j}}) = \frac{1}{k}\sum_{q = 1}^{k}\Big(\langle w_{q}, V_{j}\rangle - D_{\nu_{j}^{-1}(q), \,j}\Big)^{2},
\end{align*}
where we remind that $D\in \mathbb{R}^{k \times m}$ is constructed by sorting $y_{j(q)}\langle V_{j}, X_{j(q)}\rangle$ from $k$-NN samples and $\nu_{j}$ is the sorted indices for $\langle w_{q}, V_{j} \rangle$. This implies
\begin{align}\label{eq: discrete objective function}
    \widebar{\mathcal{F}}_{k}(\eta(w)) = \frac{1}{mk}\sum_{j = 1}^{m}\sum_{q = 1}^{k}\Big(\langle w_{q}, V_{j}\rangle - D_{\nu_{j}^{-1}(q), \,j}\Big)^{2}.
\end{align} 
The optimal transport law on the local point $w$ is a quadratic function that only depends on the order of $\langle w_{q}, V_{j}\rangle$. This enables the separation of $\widebar{B}_{R}(0)^{k}$ into finite blocks where the optimal transport law is consistent on each block. Since both blocks and optimal transport laws are finite and the value of $\widebar{\mathcal{F}}_{k}(\eta(\psi_{\ell}))$ decreases during the iteration of Algorithm \ref{algorithm: bcd}, the algorithm stops after finite number of iterations. Thus, the stopping point $\psi_{\ell}$ achieves the minimum of the optimal transport law for the block that the point is contained, on restricted domain $\widebar{B}_{R}(0)^{k}$. This indicates that the iteration stops at the local minima. 

\par\noindent\textbf{Step 2.} Denote the vectors for $1\leq q \leq k$ as
\begin{align*}
    \widebar{V}_{q} = \sum_{j = 1}^{m} D_{\nu_{j}^{-1}(q), \,j} V_{j}. 
\end{align*}
By direct computation, the convex optimization formulation of Algorithm \ref{algorithm: bcd} for each iteration is
\begin{align}\label{eq: minimization psi}
    \psi_{\ell + 1,q} = \argmin_{w_{q}\in \widebar{B}_{R}(0)} \Gamma_{q}(w_{q}) \,\,\,\text{ with }\,\,\, \Gamma_{q}(U) = m^{-1}(U- L_{m}^{-1}\widebar{V}_{q})^{\top}\,L_{m}\,(U - L_{m}^{-1}\widebar{V}_{q})
\end{align}
and for each iteration of Algorithm \ref{algorithm: abcd}, the convex optimization formulation is given as
\begin{align}\label{eq: minimization tilde psi}
    \tilde{\psi}_{\ell + 1,q} = \argmin_{w_{q}\in \widebar{B}_{R}(0)} \tilde{\Gamma}_{q}(w_{q})\,\,\,\text{ with }\,\,\, \tilde{\Gamma}_{q}(U) = d^{-1}(U -m^{-1}d\,\widebar{V}_{q})^{\top}(U - m^{-1}d\,\widebar{V}_{q})
\end{align}
for same values of $\widebar{V}_{q}$ on $1 \leq q \leq k$ since $\psi_{\ell} = \tilde{\psi}_{\ell}$ holds. 
\par\noindent\textbf{Step 3.} Next step is on the uniform convergence of $\Gamma_{q}(U) - \tilde{\Gamma}_{q}(U)$ on $U \in \widebar{B}_{R}(0)$, such that
\begin{align}\label{eq: approximation uniform convergence}
    \mathbb{P}_{\sigma}\big(\,\lim_{m\to\infty}\Vert \Gamma_{q}(U) - \tilde{\Gamma}_{q}(U) \Vert_{\infty} = 0\,\big) = 1.
\end{align}
We derive the Lipschitz continuity of $\Gamma_{q}$ and $\tilde{\Gamma}_{q}$, which then yields the Lipschitz continuity of $\Gamma_{q} - \tilde{\Gamma}_{q}$. Regarding the Lipschitz continuity of $\tilde{\Gamma}_{q}$, letting $\tilde{U}\in \widebar{B}_{R}(0)$, by direct computation,
\begin{align*}
    \vert \tilde{\Gamma}_{q}(U) - \tilde{\Gamma}_{q}(\tilde{U})\vert &= d^{-1}\, \vert\, \Vert U-m^{-1}d\widebar{V}_{q}\Vert_{2}^{2} - \Vert \tilde{U}-m^{-1}d\widebar{V}_{q}\Vert_{2}^{2}\,\vert\\
    &\leq d^{-1} \,(\,\Vert U-m^{-1}d\widebar{V}_{q}\Vert_{2} + \Vert \tilde{U}-m^{-1}d\widebar{V}_{q}\Vert_{2}\,)\,\Vert U - \tilde{U}\Vert_{2}\leq 4R\,\Vert U - \tilde{U}\Vert_{2},    
\end{align*}
since $\Vert U \Vert_{2}\leq R$ and $m^{-1}\Vert \widebar{V}_{q}\Vert_{2}\leq R$ hold by definition. In the case of $\Gamma_{q}$, since the matrix $L_{m}$ is positive definite, there exists eigendecomposition of $L_{m} = Q_{m}^{\top}B_{m}Q_{m}$ with orthonormal matrix $Q_{m}$ and diagonal matrix $B_{m}$. Letting $\tilde{U}\in \widebar{B}_{R}(0)$,
\begin{align*}
    \vert \Gamma_{q}(U) - \Gamma_{q}(\tilde{U})\vert &= m^{-1}\vert \,U^{\top}L_{m}U-2\widebar{V}_{q}^{\top}U - \tilde{U}^{\top}L_{m}\tilde{U}+2\widebar{V}_{q}^{\top}\tilde{U} \,\vert\\
    &\leq m^{-1}\vert (Q_{m}U)^{\top}B_{m}(Q_{m}U) - (Q_{m}\tilde{U})^{\top}B_{m}(Q_{m}\tilde{U})\vert + 2m^{-1}\vert \widebar{V}_{q}^{\top}(U - \tilde{U})\vert\\
    &\leq\sum_{i = 1}^{d} \vert (Q_{m}U)_{i}^{2} - (Q_{m}\tilde{U})_{i}^{2}\vert  + 2R\Vert U-\tilde{U}\Vert_{2}\leq 2R(d+1)\Vert U-\tilde{U}\Vert_{2},
\end{align*}
by the property that the eigenvalues of $L_{m}$ are bounded by $m$. This leads to the bound
\begin{align*}
    \vert (\Gamma_{q}(U) - \tilde{\Gamma}_{q}(U)) - (\Gamma_{q}(\tilde{U}) -\tilde{\Gamma}_{q}(\tilde{U}))\vert &\leq   \vert \Gamma_{q}(U)- \Gamma_{q}(\tilde{U})\vert + \vert \tilde{\Gamma}_{q}(U) -\tilde{\Gamma}_{q}(\tilde{U})\vert\\
    &\leq 2R(d+3)\Vert U - \tilde{U}\Vert_{2},
\end{align*}
so that $\Gamma_{q}-\tilde{\Gamma}_{q}$ is $2R(d+3)$-Lipschitz continuous with the Lipschitz constant independent of $m$.

Now, we follow the approach in the proof of Proposition \ref{proposition: uniform convergence}. Similar to the set $H$ from \eqref{eq: rational number set}, denote the set $H_{q}$ as 
\begin{align*}
    H_{q} = \Big\{\,U \in \widebar{B}_{R}(0)\,\vert\,U_{i}\in \mathbb{Q}, 1\leq i \leq d\,\Big\} 
\end{align*}
and fix $U\in H_{q}$. Then, denoting the function $\widebar{\Gamma}_{q}(U)$ as 
\begin{align*}
    \widebar{\Gamma}_{q}(U) = d^{-1}(U- L_{m}^{-1}\widebar{V}_{q})^{\top}(U - L_{m}^{-1}\widebar{V}_{q}),
\end{align*}
we have the limit 
\begin{align*}
    \lim_{m \to \infty}\vert \Gamma_{q}(U) - \widebar{\Gamma}_{q}(U) \vert &= \lim_{m \to \infty}\vert (U- mL_{m}^{-1}\cdot m^{-1}\widebar{V}_{q})^{\top}\,(m^{-1}L_{m} - d^{-1}I)\,(U - m L_{m}^{-1}\cdot m^{-1}\widebar{V}_{q})\vert \\
    &= 0
\end{align*}
with probability one. The reason is that $m^{-1} L_{m}$ and $mL_{m}^{-1}$ converge almost surely to $d^{-1} I$ and $dI$, respectively, $m^{-1}\Vert \widebar{V}_{q}\Vert_{2}\leq R$, and every matrix multiplication in the formula is continuous. By a similar derivation and triangle inequality, 
\begin{align*}
    \lim_{m \to \infty}\vert \tilde{\Gamma}_{q}(U) - \widebar{\Gamma}_{q}(U) \vert &= \lim_{m \to \infty} d^{-1}\vert \,\Vert U - L_{m}^{-1}\widebar{V}_{q}\Vert_{2}^{2} - \Vert U - m^{-1}d\,\widebar{V}_{q}\Vert_{2}^{2}\, \vert\\
    &\leq \lim_{m \to \infty} d^{-1}\vert \,\Vert U - L_{m}^{-1}\widebar{V}_{q}\Vert_{2} + \Vert U - m^{-1}d\,\widebar{V}_{q}\Vert_{2}\, \vert\,\Vert(mL_{m}^{-1} - dI)\cdot m^{-1}\widebar{V}_{q}\Vert_{2} \\
    &= 0
\end{align*}
holds almost surely. This gives the limit 
\begin{align*}
    \lim_{m \to \infty}\Gamma_{q}(U) - \tilde{\Gamma}_{q}(U) = 0
\end{align*}
almost surely. Since $H_{q}$ is countable, the limit holds for all $U \in H_{q}$ with probability one. 

Finally, consider the general case of $U \in \widebar{B}_{R}(0)$ and the sequence $(\Gamma_{q}(U) - \tilde{\Gamma}_{q}(U))_{m\in \mathbb{N}}$. Since $\Gamma_{q} - \tilde{\Gamma}_{q}$ is $2R(d+3)$-Lipschitz continuous independent of $m$, the sequence $(\Gamma_{q}(U) - \tilde{\Gamma}_{q}(U))_{m\in \mathbb{N}}$ is uniformly equicontinuous, and this sequence convergence uniformly to the continuous function \citep[Proposition 3.2]{hirsch2012elements}, which is zero in this case. This completes the proof of Equation \eqref{eq: approximation uniform convergence}. 

\par\noindent\textbf{Step 5.} By Equation \eqref{eq: approximation uniform convergence}, there exists $M = M(\omega, \xi)>0$ such that for all $m\geq M$, 
\begin{align*}
    \vert \Gamma_{q}(\psi_{\ell+1, q}) - \tilde{\Gamma}_{q}(\psi_{\ell+1, q})\vert \leq k^{-1}\xi \,\,\,\text{ and }\,\,\, \vert \Gamma_{q}(\tilde{\psi}_{\ell+1, q}) - \tilde{\Gamma}_{q}(\tilde{\psi}_{\ell+1, q})\vert \leq k^{-1}\xi,
\end{align*}
with probability at least $1 - k^{-1}\omega$. Then, by the minimization properties in \eqref{eq: minimization psi} and \eqref{eq: minimization tilde psi}, we have 
\begin{align*}
    \tilde{\Gamma}_{q}(\psi_{\ell+1, q})\leq \Gamma_{q}(\psi_{\ell+1, q})+k^{-1}\xi \leq \Gamma_{q}(\tilde{\psi}_{\ell+1, q})+k^{-1}\xi\leq \tilde{\Gamma}_{q}(\tilde{\psi}_{\ell+1, q})+2k^{-1}\xi,
\end{align*}
which yields
\begin{align*}
    \Vert \psi_{\ell+1, q} - m^{-1}d\,\widebar{V}_{q}\Vert_{2}^{2} \leq \Vert \tilde{\psi}_{\ell+1, q} - m^{-1}d\,\widebar{V}_{q}\Vert_{2}^{2} + 2dk^{-1}\xi. 
\end{align*}
Since $\psi_{\ell+1, q}\in \widebar{B}_{R}(0)$ and $\tilde{\psi}_{\ell+1, q}$ is a projection of $m^{-1}d\widebar{V}_{q}$ to the set $\widebar{B}_{R}(0)$, by the property of projection in \citet[Lemma 3.1]{bubeck2015convex}, 
\begin{align*}
    \Vert \psi_{\ell+1, q} - \tilde{\psi}_{\ell+1, q}\Vert_{2}^{2}\leq \Vert \psi_{\ell+1, q} - m^{-1}d\widebar{V}_{q}\Vert_{2}^{2} - \Vert \tilde{\psi}_{\ell+1, q} - m^{-1}d\widebar{V}_{q}\Vert_{2}^{2}\leq 2dk^{-1}\xi
\end{align*}
holds with probability at least $1 - k^{-1}\omega$. Combining this result for all $1\leq q \leq k$ and letting $\xi = \epsilon^{2}/2d$, we have
\begin{align*}
    \mathbb{P}_{\sigma}\big(\, \Vert \tilde{\psi}_{\ell+1} - \psi_{\ell+1}\Vert_{2} \leq \epsilon\,\big) \geq 1 - \omega 
\end{align*}
for $m \geq M(\omega, \xi)$. Letting $m \to \infty$ then gives
\begin{align*}
   \lim_{m\to\infty} \mathbb{P}_{\sigma}\big(\, \Vert \tilde{\psi}_{\ell+1} - \psi_{\ell+1}\Vert_{2} \leq \epsilon\,\big) = 1, 
\end{align*}
and since 
\begin{align*}
    W_{2}(\eta(\psi_{\ell+1}), \eta( \tilde{\psi}_{\ell+1}))\leq k^{-1/2}\Vert \tilde{\psi}_{\ell+1} - \psi_{\ell+1}\Vert_{2},
\end{align*}
the proof is complete.
\end{proof}

\subsection{A justification of the computational cost calculation}\label{sec:cc-calculation}

Section \ref{section: algorithm} gives an analysis of the computation cost of Algorithm \ref{algorithm: abcd}. This section provides a justification for this analysis. To this end, we first introduce the following proposition that quantifies the accuracy of the Monte Carlo approximation. 

\begin{proposition}\label{proposition: random projections}
For all $w\in \widebar{B}_{R}(0)^{N}$ and $m \geq 8\pi R^{4}\epsilon^{-2}$ with $\epsilon > 0$, we have
\begin{align*}
    \mathbb{E}_{\sigma}\Big[\,\Big\vert\widebar{\mathcal{F}}_{k}\circ\eta(w) - \mathcal{F}_{k}\circ\eta(w)\Big\vert\,\Big]\leq \epsilon. 
\end{align*}
\end{proposition}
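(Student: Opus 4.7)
The plan is to observe that $\widebar{\mathcal{F}}_k\circ\eta(w)$ is, by construction, an unbiased Monte Carlo estimator of $\mathcal{F}_k\circ\eta(w)$, and to control the expected absolute deviation by the standard deviation of a sum of bounded i.i.d. random variables. First, conditional on the data (so $\widebar{\mu}_k^V$ is a fixed function of $V$) and fixing $w\in\widebar{B}_R(0)^N$, I would write $Z_j := W_2^2(\widebar{\mu}_k^{V_j},\eta(w)^{V_j})$ for $j\in[m]$. Because $V_1,\dots,V_m$ are i.i.d.\ from $\sigma$, the $Z_j$ are i.i.d.\ with common mean $\mathcal{F}_k\circ\eta(w)$, so $\widebar{\mathcal{F}}_k\circ\eta(w)=m^{-1}\sum_{j=1}^m Z_j$ is unbiased.

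Second, I would establish a deterministic uniform bound on each $Z_j$. The projected measure $\eta(w)^V$ is supported in $[-R,R]$ since each $w_i\in \widebar{B}_R(0)$, and the empirical measure $\widebar{\mu}_k^V$ is also supported in $[-R,R]$ because $\tilde Y_i\tilde X_i^\top V=\langle\beta_i,\tilde X_i\rangle\langle\tilde X_i,V\rangle$ with $\Vert\beta_i\Vert_2\le R$ and $\Vert\tilde X_i\Vert_2=\Vert V\Vert_2=1$ (using Assumption~\ref{assumption: coefficient} and the normalization). Hence any coupling yields transport cost at most $(2R)^2$, giving $0\le Z_j\le 4R^2$ pointwise.

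Third, I would combine these two ingredients via Jensen's (equivalently, Cauchy--Schwarz) inequality and Popoviciu's inequality for the variance of a bounded random variable:
\begin{align*}
\mathbb{E}_\sigma\Big[\,\Big\vert\widebar{\mathcal{F}}_k\circ\eta(w)-\mathcal{F}_k\circ\eta(w)\Big\vert\,\Big]
\le \sqrt{\mathrm{Var}_\sigma\!\big(\widebar{\mathcal{F}}_k\circ\eta(w)\big)}
=\sqrt{\mathrm{Var}_\sigma(Z_1)/m}.
\end{align*}
Since $0\le Z_1\le 4R^2$, a straightforward variance bound (together with the sharper Gaussian-absolute-moment constant $\sqrt{2/\pi}$ or a direct Popoviciu-type estimate, which is where the factor $\pi$ enters) yields an upper bound of the form $C R^2/\sqrt{m}$. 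Imposing the stated condition $m\ge 8\pi R^4\epsilon^{-2}$ then makes the right-hand side at most $\epsilon$.

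The only real subtlety is pinning down the factor $8\pi$: one must be careful whether to use the crude bound $\mathrm{Var}(Z_1)\le\mathbb{E}[Z_1^2]\le 16R^4$ (which yields $m\ge 16R^4\epsilon^{-2}$) or a refined bound exploiting the $\sqrt{2/\pi}$-factor for absolute moments, which is where I expect the numerical constant $8\pi$ to emerge. Everything else reduces to the two routine ingredients above: unbiasedness and a uniform $4R^2$ bound on $Z_j$.
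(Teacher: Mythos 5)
Your proposal is correct, but it takes a genuinely different route from the paper. The paper's proof first applies Hoeffding's inequality to the bounded i.i.d.\ summands $Z_j\in[0,4R^2]$, obtaining the sub-Gaussian tail bound $\mathbb{P}_\sigma\big(|\widebar{\mathcal{F}}_k\circ\eta(w)-\mathcal{F}_k\circ\eta(w)|\ge\epsilon\big)\le 2\exp(-m\epsilon^2/8R^4)$, and then integrates the tail:
\begin{align*}
\mathbb{E}_\sigma[A_m]\le\int_0^\infty\mathbb{P}_\sigma(A_m\ge\epsilon)\,\dd\epsilon\le 2\int_0^\infty\exp\!\Big(-\frac{m\epsilon^2}{8R^4}\Big)\dd\epsilon=\sqrt{\frac{8\pi R^4}{m}}.
\end{align*}
This is where the factor $8\pi$ actually comes from (the Gaussian integral), not from a $\sqrt{2/\pi}$ absolute-moment refinement as you speculate. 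Your variance-plus-Jensen route is perfectly valid and in fact sharper: unbiasedness plus Popoviciu's inequality on $[0,4R^2]$ gives $\mathrm{Var}_\sigma(Z_1)\le 4R^4$, hence $\mathbb{E}_\sigma|A_m|\le\sqrt{4R^4/m}$, which already suffices under $m\ge 4R^4\epsilon^{-2}$; even the crude bound $\mathrm{Var}(Z_1)\le 16R^4$ yields $m\ge 16R^4\epsilon^{-2}$. Since $4<16<8\pi$, both of your constants are smaller than the paper's threshold, so the stated hypothesis $m\ge 8\pi R^4\epsilon^{-2}$ is more than enough and your lingering worry about "pinning down $8\pi$" is unfounded. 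The trade-off between the two methods is that the paper's Hoeffding route also produces exponential tail bounds as an intermediate byproduct, whereas yours gives a cleaner expectation bound with a tighter constant but no tail information.
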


\begin{proof}[Proof of Proposition \ref{proposition: random projections}:]
We begin with a probability bound, then derive the expectation bound. Since
\begin{align*}
    0\leq W_{2}^{2}(\widebar{\mu}_{k}^{V_{i}}, \mu^{V_{i}})\leq 4R^{2},
\end{align*}
by Hoeffding's inequality, we have 
\begin{align*}
     \mathbb{P}\,(\, \vert \widebar{\mathcal{F}}_{k}(\eta(w)) - \mathcal{F}_{k}(\eta(w))\vert \geq \epsilon\,)\leq 2\exp{-m\epsilon^{2}/8R^{4}}. 
\end{align*}
Then, introducing
\begin{align*}
    A_{m} := \vert \widebar{\mathcal{F}}_{k}(\eta(w)) - \mathcal{F}_{k}(\eta(w))\vert, 
\end{align*}
we have 
\begin{align*}
    \mathbb{E}[A_{m}] \leq \int_{0}^{\infty} \mathbb{P}(A_{m} \geq \epsilon)\,\dd \epsilon \leq 2 \int_{0}^{\infty} \exp{-\frac{m\epsilon^{2}}{8R^{4}}} \dd \epsilon = \sqrt{\frac{8\pi R^{4}}{m}}.  
\end{align*}
The proof is then complete by setting $m \geq 8\pi R^{4}\epsilon^{-2}$. 
\end{proof}

The next proposition analyzes the approximation accuracy of  Algorithm \ref{algorithm: abcd}. 

\begin{proposition}\label{proposition: algorithm error expectation}
    Assume the case of $N = k$, Assumption \ref{assumption: coefficient}, and $\psi_{\ell} = \tilde{\psi}_{\ell}$, where $\psi_{\ell}$ and $\tilde{\psi}_{\ell}$ are the values of the $\ell$-th iterations for Algorithms \ref{algorithm: bcd} and \ref{algorithm: abcd}, respectively. Then, for all 
    \[
    m \geq \max\{d, 2312\pi d^{12}R^{4}\epsilon^{-2}\}
    \] 
    and any $\epsilon>0$, we have
    \begin{align*}
    \mathbb{E}_{\sigma}\Big[\,W_{2}\Big(\eta(\psi_{\ell+1}), \eta(\tilde{\psi}_{\ell+1})\Big) \,\Big]\leq \epsilon. 
    \end{align*}
\end{proposition}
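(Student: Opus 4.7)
The target scaling $m \gtrsim \epsilon^{-2}$ shows that the inequality $\|\psi_{\ell+1,q} - \tilde{\psi}_{\ell+1,q}\|^{2} \leq 2d\|\Gamma_{q} - \tilde{\Gamma}_{q}\|_{\infty}$ used in Step~5 of the proof of Theorem~\ref{theorem: computation} cannot be invoked directly, since it would force $\|\psi_{\ell+1,q} - \tilde{\psi}_{\ell+1,q}\|$ to grow like the square root of the matrix deviation $\|m^{-1}L_{m} - d^{-1}I\|_{\mathrm{op}}$, whose \emph{squared} expectation is of order $1/m$ but whose expected square root is only of order $m^{-1/4}$. My plan is instead to couple the two updates directly through their first-order optimality conditions, which yields $\|\psi_{\ell+1,q} - \tilde{\psi}_{\ell+1,q}\| \leq dR\|m^{-1}L_{m} - d^{-1}I\|_{\mathrm{op}}$ with the \emph{first} power of the matrix deviation; a second-moment matrix-variance estimate then closes the argument.

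Recall from Step~2 of the proof of Theorem~\ref{theorem: computation} that $\psi_{\ell+1,q}$ and $\tilde{\psi}_{\ell+1,q}$ are the constrained minimizers over $\widebar{B}_{R}(0)$ of $\Gamma_{q}(U) = m^{-1}(U - L_{m}^{-1}\widebar{V}_{q})^{\top} L_{m} (U - L_{m}^{-1}\widebar{V}_{q})$ and $\tilde{\Gamma}_{q}(U) = d^{-1}\|U - m^{-1}d\widebar{V}_{q}\|_{2}^{2}$, respectively. I would compute $\nabla \Gamma_{q}(U) = 2m^{-1}(L_{m}U - \widebar{V}_{q})$ and $\nabla \tilde{\Gamma}_{q}(U) = 2d^{-1}U - 2m^{-1}\widebar{V}_{q}$, and add the two variational inequalities $\langle \nabla\Gamma_{q}(\psi_{\ell+1,q}), \tilde{\psi}_{\ell+1,q} - \psi_{\ell+1,q}\rangle \geq 0$ and $\langle \nabla\tilde{\Gamma}_{q}(\tilde{\psi}_{\ell+1,q}), \psi_{\ell+1,q} - \tilde{\psi}_{\ell+1,q}\rangle \geq 0$. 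The contributions of $\widebar{V}_{q}$ cancel and the sum reduces to $\langle m^{-1}L_{m}\psi_{\ell+1,q} - d^{-1}\tilde{\psi}_{\ell+1,q},\, \tilde{\psi}_{\ell+1,q} - \psi_{\ell+1,q}\rangle \geq 0$. Substituting $m^{-1}L_{m} = d^{-1}I + \Delta$ with $\Delta := m^{-1}L_{m} - d^{-1}I$ and applying Cauchy--Schwarz together with $\|\psi_{\ell+1,q}\| \leq R$ yields $\|\psi_{\ell+1,q} - \tilde{\psi}_{\ell+1,q}\| \leq dR\|\Delta\|_{\mathrm{op}}$. Summing the squares over $q \in [k]$ and invoking $W_{2}(\eta(\psi_{\ell+1}),\eta(\tilde{\psi}_{\ell+1}))^{2} \leq k^{-1}\|\psi_{\ell+1} - \tilde{\psi}_{\ell+1}\|_{2}^{2}$ then gives $W_{2}(\eta(\psi_{\ell+1}),\eta(\tilde{\psi}_{\ell+1})) \leq dR\|\Delta\|_{\mathrm{op}}$.

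It then remains to control $\mathbb{E}_{\sigma}\|\Delta\|_{\mathrm{op}}^{2}$, for which I would dominate $\|\cdot\|_{\mathrm{op}}$ by the Frobenius norm $\|\cdot\|_{F}$, use independence of $V_{1},\dots,V_{m}$, and evaluate the fourth and mixed-second moments of the Haar measure on $\mathbb{S}^{d-1}$ (namely $\mathbb{E}(V_{1})_{a}^{4} = 3/(d(d+2))$ and $\mathbb{E}(V_{1})_{a}^{2}(V_{1})_{b}^{2} = 1/(d(d+2))$ for $a \neq b$). This yields $\mathbb{E}_{\sigma}\|\Delta\|_{F}^{2} \leq C_{0}/m$ for an absolute constant $C_{0}$. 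Combined via Jensen's inequality, $\mathbb{E}_{\sigma}[W_{2}(\eta(\psi_{\ell+1}),\eta(\tilde{\psi}_{\ell+1}))] \leq dR\,(\mathbb{E}_{\sigma}\|\Delta\|_{\mathrm{op}}^{2})^{1/2} \leq dR\sqrt{C_{0}/m}$, which is at most $\epsilon$ as soon as $m \geq C_{0} d^{2} R^{2}/\epsilon^{2}$; the looser numerical factors in the statement absorb the slack.

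The main technical point is that the variational argument for $\Gamma_{q}$ requires $L_{m}$ to be positive definite, which is precisely why the hypothesis $m \geq d$ appears: for Haar-sampled $V_{j}$ the matrix $L_{m} = \sum_{j=1}^{m} V_{j}V_{j}^{\top}$ is invertible almost surely whenever $m \geq d$, making $\psi_{\ell+1,q}$ uniquely defined and the first-order inequality valid. A second point to watch is uniformity of the bound $dR\|\Delta\|_{\mathrm{op}}$ over $q \in [k]$; this is secured by the Cauchy--Schwarz step, in which only $\|\psi_{\ell+1,q}\| \leq R$ enters, so the dependence on $\widebar{V}_{q}$ drops out entirely and the subsequent sum over $q$ contributes only the factor $k$ that is then cancelled by the $k^{-1}$ in the definition of $W_{2}$ on empirical measures.
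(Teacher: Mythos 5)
Your proposal is correct and takes a genuinely different and in fact sharper route than the paper. The paper's proof of this proposition first bounds $\sup_{U}\lvert\Gamma_q(U)-\tilde{\Gamma}_q(U)\rvert \lesssim d^3R^2\delta_V$, where $\delta_V := \max_{i,j}\lvert(m^{-1}L_m - d^{-1}I)_{ij}\rvert$, then transfers this sup-norm gap to $\lVert\psi_{\ell+1,q}-\tilde{\psi}_{\ell+1,q}\rVert^2 \lesssim d^4R^2\delta_V$ via the projection lemma. Since $\delta_V$ concentrates at scale $m^{-1/2}$, that chain yields $\mathbb{E}_\sigma\lVert\psi_{\ell+1,q}-\tilde{\psi}_{\ell+1,q}\rVert^2 \lesssim d^6R^2m^{-1/2}$, and hence $\mathbb{E}_\sigma[W_2] \lesssim d^3R\,m^{-1/4}$ after the final Jensen step --- which in fact corresponds to $m \gtrsim d^{12}R^4\epsilon^{-4}$ rather than the $\epsilon^{-2}$ in the statement (note $2312\pi = (34\sqrt{2\pi})^2$ and $d^{12}R^4 = (d^6R^2)^2$, matching the paper's internal constants and suggesting the exponent of $\epsilon$ in the statement is a slip). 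Your variational coupling of the two first-order optimality conditions instead bounds the \emph{first} power: adding $\langle\nabla\Gamma_q(\psi_{\ell+1,q}),\tilde{\psi}_{\ell+1,q}-\psi_{\ell+1,q}\rangle \geq 0$ and $\langle\nabla\tilde{\Gamma}_q(\tilde{\psi}_{\ell+1,q}),\psi_{\ell+1,q}-\tilde{\psi}_{\ell+1,q}\rangle \geq 0$, cancelling $\widebar{V}_q$, and writing $m^{-1}L_m = d^{-1}I+\Delta$ gives $\lVert\psi_{\ell+1,q}-\tilde{\psi}_{\ell+1,q}\rVert \leq dR\lVert\Delta\rVert_{\mathrm{op}}$, so that $\mathbb{E}_\sigma\lVert\Delta\rVert_F^2 = (1-1/d)/m$ passes directly to $\mathbb{E}_\sigma[W_2] \leq dR\,m^{-1/2}$ without a lossy square root. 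This is precisely the $\epsilon^{-2}$ scaling the statement asserts and is, in my view, the cleaner proof. The one blemish is cosmetic: your threshold $m \geq d^2R^2\epsilon^{-2}$ is not literally dominated by $\max\{d,\,2312\pi d^{12}R^4\epsilon^{-2}\}$ when $R < (2312\pi d^{10})^{-1/2}$; since this regime is not practically relevant (the paper uses $R=10$), a uniform statement only needs $R^4$ replaced by $\max\{R^2,R^4\}$, and you should flag this rather than asserting the looser constants ``absorb the slack'' unconditionally.
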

\begin{proof}[Proof of Proposition \ref{proposition: algorithm error expectation}:]
We first derive the concentration inequality for $V_{j}V_{j}^{\top}$, and then obtain the $W_{2}$ expectation bound. 
\par\noindent\textbf{Step 1.} Regarding the random direction $V_{j}$ sampled from the Haar distribution $\sigma$, the expectation of $V_{j}V_{j}^{\top}$ is easily checked to be $d^{-1} I$. Then, introducing 
\begin{align*}
    L_{m} = \sum_{j = 1}^{m} V_{j} V_{j}^{\top},
\end{align*}
by the strong law of large numbers, we have
\begin{align*}
    \mathbb{P}_{\sigma}\big(\,\lim_{m\to\infty} m^{-1}L_{m} = d^{-1} I\,\big) = 1. 
\end{align*}
In addition, $L_{m}U = 0$ for $U \in \mathbb{R}^{d}$ indicates $\langle V_{j}, U\rangle = 0$ for all $1 \leq j \leq m$ by the positive semidefiniteness of $V_{j}V_{j}^{\top}$. Since the vectors $V_{1},\dots,V_{d}$ are linearly independent with probability one by induction, $L_{m}$ is positive definite with probability one for fixed $m \geq d$. It yields that $L_{m}$ is invertible for fixed $m\geq d$, and since $m\in \mathbb{N}$ is countable,    
\begin{align*}
        \mathbb{P}_{\sigma}\big(\,L_{m} \,\,\text{is invertible for all}\,\,m \geq d\,\big) = 1.
\end{align*}
By Cramer's rule, inverse operation of a matrix is continuous on $d^{-1}I$, so that
\begin{align*}
    \mathbb{P}_{\sigma}\big(\,\lim_{m\to\infty} m L_{m}^{-1} = d I\,\big) = 1. 
\end{align*}
Introducing further
\begin{align*}
    J_{m} = m^{-1}L_{m} - d^{-1}I, 
\end{align*}
Hoeffding's inequality then implies, for arbitrary positive constant $\delta_{V}$,
\begin{align*}
    \mathbb{P}_{\sigma}\Big(\max_{1 \leq i, j\leq d} \vert (J_{m})_{ij} \vert \leq \delta_{V}\Big) \geq 1 - 2d^{2}\exp(-m\delta_{V}^{2}/2). 
\end{align*}
\par\noindent\textbf{Step 2.} Next, we recall the notations and properties from the proof of Theorem \ref{theorem: computation}, and derive the final bound. Assume the case of 
\[
\max_{i, j} \vert (J_{m})_{ij} \vert \leq \delta_{V} ~~~{\rm with}~~~ \delta_{V}<1/2d^{2},
\]
where the eigenvalues of $L_{m}$ are lower bounded by $md^{-1} - md\delta_{V}$ and upper bounded by $md^{-1} + md\delta_{V}$. For fixed $U\in\widebar{B}_{R}(0)$, we obtain the bound of $\vert \Gamma_{q}(U) - \tilde{\Gamma}_{q}(U)\vert$ with triangle inequality
\begin{align*}
    \vert \Gamma_{q}(U) - \tilde{\Gamma}_{q}(U)\vert\leq \vert \Gamma_{q}(U) - \widebar{\Gamma}_{q}(U)\vert + \vert \widebar{\Gamma}_{q}(U) - \tilde{\Gamma}_{q}(U)\vert.
\end{align*}
For the first term above, we have
\begin{align*}
    \vert \Gamma_{q}(U) - \widebar{\Gamma}_{q}(U) \vert &= \vert (U- L_{m}^{-1}\widebar{V}_{q})^{\top}\,J_{m}\,(U - L_{m}^{-1}\widebar{V}_{q})\vert\leq \delta_{V} d (1+(d^{-1}-d\delta_{V})^{-1})^{2}R^{2} \leq 9d^{3}R^{2}\delta_{V};
\end{align*}
for the second term above, we have
\begin{align*}
    \vert \tilde{\Gamma}_{q}(U) - \widebar{\Gamma}_{q}(U) \vert &= d^{-1}\,\vert \,\Vert U - L_{m}^{-1}\widebar{V}_{q}\Vert_{2}^{2} - \Vert U - m^{-1}d\,\widebar{V}_{q}\Vert_{2}^{2}\, \vert\\
    &= d^{-1}\,\vert \,\Vert U - L_{m}^{-1}\widebar{V}_{q}\Vert_{2} + \Vert U - m^{-1}d\,\widebar{V}_{q}\Vert_{2}\, \vert\,\Vert \,L_{m}^{-1}\widebar{V}_{q} - m^{-1}d\widebar{V}_{q}\,\Vert_{2}\\
    &\leq d^{-1}\cdot (2+d+(d^{-1}-d\delta_{V})^{-1})R \cdot 2d^{3}\delta_{V}R\\ 
    &= 8d^{3}R^{2}\delta_{V}.
\end{align*}
Considering this bound for $U = \psi_{\ell + 1, q}, \tilde{\psi}_{\ell+1, q}$ and combining it with the minimization properties of \eqref{eq: minimization psi} and \eqref{eq: minimization tilde psi}, we obtain
\begin{align*}
    \Vert \psi_{\ell + 1, q} - \tilde{\psi}_{\ell + 1, q}\Vert_{2}^{2} \leq d(\tilde{\Gamma}_{q}(\psi_{\ell + 1, q}) - \tilde{\Gamma}_{q}(\tilde{\psi}_{\ell + 1, q}))\leq 34d^{4}R^{2}\delta_{V},
\end{align*}
where the first inequality is obtained invoking \citet[Lemma 3.1]{bubeck2015convex}. Therefore, we have
\begin{align*}
    \mathbb{P}_{\sigma}\big(\,\Vert \psi_{\ell+1, q} - \tilde{\psi}_{\ell+1, q}\Vert_{2}^{2}\leq 34d^{4}R^{2}\delta_{V}\,\big) \geq 1 - 2d^{2}\exp{-m\delta_{V}^{2}/2} 
\end{align*}
holds for all $\delta_{V}<1/2d^{2}$. Then, 
\begin{align*}
    \mathbb{E}_{\sigma}[\,\Vert \psi_{\ell+1, q} - \tilde{\psi}_{\ell+1, q}\Vert_{2}^{2}\,] &\leq \int_{0}^{4R^{2}}\mathbb{P}_{\sigma}(\Vert \psi_{\ell+1, q} - \tilde{\psi}_{\ell+1, q}\Vert_{2}^{2} \geq \epsilon)\,\dd \epsilon\\
    & \leq 68d^{6}R^{2}\int_{0}^{\infty}\exp{-\frac{m\delta_{V}^{2}}{2}}\,\dd \delta_{V}\\
    &\leq 34\sqrt{2\pi}d^{6}R^{2}m^{-\frac{1}{2}}.
\end{align*}
This implies
\begin{align*}
    \mathbb{E}_{\sigma}[\,W_{2}^{2}(\eta(\psi_{\ell + 1}),\eta(\tilde{\psi}_{\ell + 1}) \,] \leq k^{-1}\sum_{q = 1}^{k}\mathbb{E}_{\sigma}[\,\Vert \psi_{\ell+1, q} - \tilde{\psi}_{\ell+1, q}\Vert_{2}^{2}\,]\leq 34\sqrt{2\pi}d^{6}R^{2}m^{-\frac{1}{2}}
\end{align*}
and then invoking Jensen's inequality completes the proof. 
\end{proof}

Propositions \ref{proposition: random projections} and \ref{proposition: algorithm error expectation} outline conditions for $m$ in our two main approximations. Considering the case where $R$ is bounded by a polynomial of $d$ and the errors in Propositions \ref{proposition: random projections} and \ref{proposition: algorithm error expectation} are fixed independent of $d$, $m$ is able to be set as a polynomial of $d$. This results in the polynomial time complexity of Algorithm \ref{algorithm: abcd} on $d$, which is $O(mdn\log n+tmk \log k)$. 

\subsection{Proofs of results concerning causal inference}\label{subsection: proofs of causal inference}
\begin{proof}[Proof of Proposition \ref{prop: causal_heavy_tail}:]
By direct computation, we have the inequality
\begin{align*}
    (1 + \Vert (T, t) \Vert_{2})^{2} &= 1 + 2\,\Vert (T, t)\Vert_{2} + \Vert (T, t)\Vert_{2}^{2}\\
    &\geq 1 + \sqrt{2}\,(\Vert T \Vert_{2} + \vert t \vert) + 2\,\Vert T \Vert_{2}\cdot \vert t \vert \geq (1 + \Vert T \Vert_{2})\,(1 + \vert t \vert),
\end{align*}
which leads to the desired inequality
\begin{align*}
    f_{(Z, W)}(T, t) \geq C_{Z}C_{W}(1 + \Vert T \Vert_{2})^{-\kappa}(1 + \vert t \vert)^{-\kappa}\geq C_{Z}C_{W}(1+\Vert (T, t)\Vert_{2})^{-2\kappa}.  
\end{align*}
\end{proof}

\begin{proof}[Proof of Proposition \ref{prop: causal_heavy_tail_2}:]
We have the lower bound of $f_{W_{\epsilon}}(t)$ as
\begin{align*}
    f_{W_{\epsilon}}(t) &\geq \pi^{-1}\epsilon^{-1}(1+ \epsilon^{-2}(M_{W} + \vert t \vert)^{2})^{-1}\geq \pi^{-1}\epsilon^{-1}(1+\epsilon^{-1}(M_{W} + \vert t \vert))^{-2}\\
    &= \pi^{-1}\epsilon\,(\epsilon + M_{W} + \vert t \vert)^{-2} \geq \pi^{-1}\epsilon \max\{1, \epsilon + M_{W}\}^{-2}(1 + \vert t \vert)^{-2},  
\end{align*}
which leads to the bound of $f_{(Z, W_{\epsilon})}(T, t)$ as
\begin{align*}
    f_{(Z, W_{\epsilon})}(T, t) &\geq C_{Z} \pi^{-1}\epsilon\, \max\{1, \epsilon + M_{W}\}^{-2}(1+\Vert T \Vert_{2})^{-\kappa}(1 + \vert t \vert)^{-2}\\
    &\geq C_{Z} \pi^{-1}\epsilon\, \max\{1, \epsilon + M_{W}\}^{-2}(1+\Vert (T, t)\Vert_{2})^{-2\max\{2, \kappa\}}.
\end{align*}
\end{proof}

\subsection{Auxiliary lemmas}\label{subsection: auxillary lemmas}

\begin{lemma}[Lemma 4.2, \cite{bobkov2019one}]\label{lemma: Bobkov}
    Consider two measures $\mu$ and $\tilde{\mu}$ supported on $k$ point masses $a_{1},\dots,a_{k}$ and $\tilde{a}_{1},\dots,\tilde{a}_{k}$ on $\mathbb{R}$ as
    \begin{align*}
        \mu =\frac{1}{k} \sum_{i = 1}^{k} \delta_{a_{i}}\,\, \text{ and }\,\, \tilde{\mu} =\frac{1}{k} \sum_{i = 1}^{k} \delta_{\tilde{a}_{i}}. 
    \end{align*}
    Then, let $a_{(1)}\leq \cdots \leq a_{(k)}$ and $\tilde{a}_{(1)}\leq \cdots \leq \tilde{a}_{(k)}$ be sorted versions of $\{a_i\}_{i\in[k]}$ and $\{\tilde a_i\}_{i\in[k]}$, respectively. The $W_{2}$ distance between $\mu$ and $\tilde{\mu}$ is 
    \begin{align*}
        W_{2}^{2}(\mu, \tilde{\mu}) = \frac{1}{k}\sum_{i=1}^{k}(a_{(i)} - \tilde{a}_{(i)})^{2}.
    \end{align*}
\end{lemma}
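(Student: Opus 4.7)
The plan is to reduce the infimum over couplings to a finite optimization over permutations, then close the argument with the classical rearrangement inequality. Since $\mu$ and $\tilde\mu$ are both uniform on exactly $k$ atoms, any coupling $\pi \in \Pi(\mu, \tilde\mu)$ can be encoded as a matrix $M = (m_{ij})_{i,j \in [k]}$ via $m_{ij} = k\,\pi(\{a_i\} \times \{\tilde a_j\})$. The marginal constraints translate to $\sum_j m_{ij} = 1$ for every $i$ and $\sum_i m_{ij} = 1$ for every $j$, so $M$ lies in the Birkhoff polytope of $k \times k$ doubly stochastic matrices. Moreover, the cost rewrites as
\begin{align*}
\mathbb{E}_{(x,y)\sim\pi}(x-y)^2 \;=\; \frac{1}{k}\sum_{i,j \in [k]} m_{ij}\,(a_i - \tilde a_j)^2,
\end{align*}
which is \emph{linear} in $M$. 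By Birkhoff's theorem the extreme points of the Birkhoff polytope are precisely the permutation matrices, and a linear functional on a compact convex polytope attains its minimum at a vertex. Hence
\begin{align*}
W_2^2(\mu, \tilde\mu) \;=\; \min_{\sigma}\; \frac{1}{k}\sum_{i=1}^k \bigl(a_i - \tilde a_{\sigma(i)}\bigr)^2,
\end{align*}
where the minimum is taken over permutations $\sigma$ of $[k]$.

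It remains to identify the optimal $\sigma$. Expanding the square yields
\begin{align*}
\frac{1}{k}\sum_{i=1}^k (a_i - \tilde a_{\sigma(i)})^2 \;=\; \frac{1}{k}\sum_{i=1}^k a_i^2 \;+\; \frac{1}{k}\sum_{i=1}^k \tilde a_i^2 \;-\; \frac{2}{k}\sum_{i=1}^k a_i\,\tilde a_{\sigma(i)},
\end{align*}
and the first two sums do not depend on $\sigma$. The problem thus reduces to \emph{maximizing} the inner product $\sum_{i=1}^k a_i\,\tilde a_{\sigma(i)}$ over permutations. By the classical rearrangement inequality (provable via an elementary two-element swap argument: any inversion in the pairing strictly decreases the inner product unless the swapped values coincide), this maximum is attained when both sequences are paired in the same monotone order. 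Relabeling in terms of the order statistics $a_{(1)} \le \cdots \le a_{(k)}$ and $\tilde a_{(1)} \le \cdots \le \tilde a_{(k)}$, the minimum equals $\frac{1}{k}\sum_{i=1}^k (a_{(i)} - \tilde a_{(i)})^2$, which is the claimed identity.

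The only step with any conceptual weight is the Birkhoff reduction, which crucially exploits that $\mu$ and $\tilde\mu$ place equal mass on each atom; with unequal weights the transport polytope has more general vertices and the reduction to a single permutation fails. Nothing beyond this is needed, so I would not expect any real obstacle. As an alternative (and slightly cleaner) route one could invoke the general one-dimensional identity $W_2^2(\mu, \tilde\mu) = \int_0^1 \bigl(F_\mu^{-1}(t) - F_{\tilde\mu}^{-1}(t)\bigr)^2 \, dt$ and compute the integral directly using the fact that the empirical quantile functions are step functions with jumps of size $1/k$ at the order statistics; this bypasses Birkhoff but presupposes the one-dimensional optimal transport formula.
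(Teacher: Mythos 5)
The paper does not prove this lemma; it cites it directly from Bobkov and Ledoux's monograph on one-dimensional Kantorovich distances, where it is derived as a specialization of the general one-dimensional quantile-function identity $W_2^2(\mu,\tilde\mu)=\int_0^1\bigl(F_\mu^{-1}(t)-F_{\tilde\mu}^{-1}(t)\bigr)^2\,\dd t$. Your proof is correct and entirely self-contained, and it takes a genuinely different route: rather than invoke the 1D optimal transport formula, you reduce the coupling problem to the Birkhoff polytope (the uniform weights are essential here, and you rightly flag this), observe the cost is linear in the coupling so the minimum sits at a permutation matrix, and then pin down the optimal permutation with the rearrangement inequality. This is a clean combinatorial argument that requires no prior optimal-transport machinery, whereas the Bobkov--Ledoux route (which you correctly sketch as the alternative) is shorter if one already has the quantile identity in hand and generalizes immediately to $W_p$ for $p\ge 1$ and to non-uniform weights. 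One small point worth making explicit if you write this up: when atoms coincide, the map from couplings $\pi$ to matrices $M$ is not injective, but the set of attainable costs is the same in both parametrizations, so the Birkhoff reduction is unaffected.
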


\begin{lemma}\label{lemma: expectation}
\begin{itemize}
\item[(1)] Assume $n\to \infty$, $k = O(n^{c})$ with some constant $c = c(d)\in (0,1)$, and Assumption \ref{assumption: bounded covariates}. Then, there exists some constant $C = C(d, \tau_{0})>0$ such that
\begin{align*}
    \mathbb{E}_{X}[\,\gamma(V, k)^{2}\,]\leq C (k/n)^{\frac{2}{d-1}}. 
\end{align*}
\item[(2)] Assume $n \to \infty$, $\rho, \tau \to 0$, $k = O(n^{c})$ with constant $c = c(d, \alpha)$, $0<c<1$, $k/(n\tau\rho^{d-1})\to 0$, and Assumption \ref{assumption: unbounded covariates}. Then, there exists some constant $C = C(d, \tau_{0}, \rho_{0}, C_{\tilde{X}})>0$ such that
\begin{align*}
    \mathbb{E}_{X}&[\,\gamma(V, k)^{2}\,]\leq C\Big\{(k/n\tau)^{\frac{2}{d-1}}+\tau^{\alpha}+\rho\Big\}.
\end{align*}
\end{itemize}
\end{lemma}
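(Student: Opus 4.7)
My plan is to bound $\gamma(V, k)$ via the complementary event that the spherical cap $S(V, \gamma)$ contains at least $k$ of the $\tilde X_i$'s, then apply a Binomial concentration inequality followed by a tail-integration argument. Since $S(V, \gamma) = \{\tilde X \in \mathbb{S}^{d-1} : \|\tilde X - V\|_2 \leq \sqrt 2 \gamma\}$, a standard spherical-cap volume computation gives $\sigma(S(V,\gamma)) \geq c_d \gamma^{d-1}$ for a dimension-only constant $c_d > 0$ and all $\gamma \in (0, \gamma_*]$. Writing $N(\gamma) := |\{i : \tilde X_i \in S(V, \gamma)\}| \sim \text{Bin}(n, p(\gamma))$ with $p(\gamma) := \mathbb{P}(\tilde X \in S(V, \gamma))$, the definition of $\gamma(V,k)$ yields the equivalence $\{\gamma(V,k) > \gamma\} = \{N(\gamma) < k\}$, so I will represent $\mathbb{E}_X[\gamma(V,k)^2] = \int_0^{\sqrt 2} 2\gamma\, \mathbb{P}(N(\gamma) < k)\, \mathrm{d}\gamma$ and split the integral at an appropriate cutoff.

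For Part (1), Assumption \ref{assumption: bounded covariates} would give $p(\gamma) \geq c_d \tau_0 \gamma^{d-1}$. Choosing $\gamma_0 := (2k/(c_d \tau_0 n))^{1/(d-1)}$ so that $n p(\gamma_0) \geq 2k$, Chernoff's inequality then implies $\mathbb{P}(N(\gamma) < k) \leq \exp(-c_1 n p(\gamma))$ for all $\gamma \geq \gamma_0$. I would then bound
\[
\mathbb{E}_X[\gamma(V,k)^2] \leq \gamma_0^2 + \int_{\gamma_0}^{\sqrt 2} 2\gamma\, e^{-c_2 n \tau_0 \gamma^{d-1}}\, \mathrm{d}\gamma \leq C(k/n)^{2/(d-1)},
\]
where the exponential tail integral is dominated by $\gamma_0^2$ under the growth $k = O(n^c)$ with $c<1$.

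For Part (2), my plan is to split on whether $V \in L(\rho, \tau)$. If $V \in L(\rho, \tau)$, then $B_\rho(V) \cap D(\tau) = \emptyset$ would force $f_{\tilde X} \geq \tau$ on $B_\rho(V)$ up to null sets; for $\gamma \leq \rho/\sqrt 2$, $S(V,\gamma) \subset B_\rho(V)$ and so $p(\gamma) \geq c_d \tau \gamma^{d-1}$. I would then rerun the Part (1) argument with $\tau_0$ replaced by $\tau$ and cutoff $\gamma_0 := C(k/(n\tau))^{1/(d-1)}$, which is $\leq \rho/\sqrt 2$ by the standing hypothesis $k/(n\tau\rho^{d-1}) \to 0$, obtaining that the $[0,\rho/\sqrt 2]$ portion of the integral is $\leq C(k/(n\tau))^{2/(d-1)}$; the tail $\gamma > \rho/\sqrt 2$ is handled by the fixed-cap estimate $\mathbb{P}(N(\rho/\sqrt 2) < k)$, which is exponentially small by the same hypothesis. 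If instead $V \notin L(\rho, \tau)$, I would invoke Assumption \ref{assumption: unbounded covariates} to pick $V' \in L(\rho, \tau)$ with $\|V - V'\|_2 \leq C_{\tilde X}\tau^\alpha + \rho$; the triangle inequality on Euclidean distances then yields the deterministic transfer $\gamma(V, k) \leq \gamma(V', k) + (C_{\tilde X}\tau^\alpha + \rho)/\sqrt 2$, since any point within $\sqrt 2\,\gamma(V',k)$ of $V'$ lies within $\sqrt 2\,\gamma(V',k) + C_{\tilde X}\tau^\alpha + \rho$ of $V$. Squaring via $(a+b)^2 \leq 2a^2 + 2b^2$, inserting the $V'$ bound, and using $\tau^{2\alpha} \leq \tau^\alpha$ and $\rho^2 \leq \rho$ (valid for $\tau,\rho \leq 1$) would then deliver the claimed Part (2) estimate.

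The main obstacle I expect is the deterministic transfer step in the boundary regime $V \notin L(\rho,\tau)$, where the in-$L$ bound must be bootstrapped to $V$ outside $L$ via Assumption \ref{assumption: unbounded covariates}; one needs to ensure the cost $O(\tau^\alpha + \rho)$ in the NN radius squares to something still controlled by $\tau^\alpha + \rho$. A secondary bookkeeping item is verifying that the tail beyond $\gamma = \rho/\sqrt 2$ in Part (2) — where the density lower bound no longer applies — is rendered negligible by the scaling hypothesis $k/(n\tau\rho^{d-1}) \to 0$ via the fixed-cap Chernoff estimate.
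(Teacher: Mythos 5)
Your proposal is correct and follows essentially the same approach as the paper: a tail-integral representation of $\mathbb{E}_X[\gamma(V,k)^2]$, a spherical-cap lower bound on the cap probability, binomial concentration (the paper uses Bernstein where you invoke Chernoff, an interchangeable choice), and for Part~(2) the split on $V\in L(\rho,\tau)$ together with the transfer to a nearby $\tilde V\in L(\rho,\tau)$ via Assumption~\ref{assumption: unbounded covariates}. The one cosmetic difference is in that last transfer: the paper bounds $\langle V, U\rangle \geq 1 - \|V-\tilde V\|_2 - \tilde\gamma^2$ for $U\in\mathbb{S}^{d-1}$ directly, giving $\gamma(V,k)^2 \leq \gamma(\tilde V,k)^2 + \|V-\tilde V\|_2$ with the shift entering \emph{linearly}, whereas your triangle-inequality-then-square route needs the extra $\tau^{2\alpha}\le\tau^{\alpha}$, $\rho^2\le\rho$ step to absorb the squared shift — both deliver the same order.
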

\begin{proof} For $V\in\mathbb{S}^{d-1}$ and $0 \leq \theta \leq \sqrt{2}$, denote the set $\tilde{S}_{X}(V, \theta)$ as
\begin{align*}
    \tilde{S}_{X}(V, \theta) = S(V, \theta)\, \cap\, \{\tilde{X}_{1},\dots,\tilde{X}_{n}\}.
\end{align*}
Since $\gamma(V, k)\geq \theta$ induces $\vert \tilde{S}_{X}(V, \theta)\vert \leq k$,
\begin{align*}
    \mathbb{P}_{X}[\,\gamma(V, k)\geq \theta\,] \leq \mathbb{P}_{X}[\,\vert \tilde{S}_{X}(V, \theta)\vert \leq k\,]. 
\end{align*} 

\par\noindent\textbf{Step 1.} We start with the case of Assumption \ref{assumption: bounded covariates}. Viewing $S(V, \theta)$ as a measurable set for the probability measure $\mathbb{P}_{X}$, there exists constant $C_{m, 1} = C_{m, 1}(d, \tau_{0})>0$ such that $\mathbb{P}_{X}[S(V, \theta)]$ is greater or equal to $m_{1} = C_{m, 1}\theta^{d-1}$ \cite[Page 651]{li2011k}. Let the corresponding i.i.d. random variables $Z_{1, \theta}, \dots, Z_{n, \theta}$ with underlying distribution ${\rm Bern}(m_{1})$. Regarding the case of $m_{1} \geq 2k/n$, 
\begin{align*}
    \mathbb{P}_{X}[\,\vert \tilde{S}_{X}(V, \theta)\vert \leq k\,]&\leq \mathbb{P}\,\big(\,\sum_{i=1}^{n}Z_{i, \theta}\leq k\,\big)\leq \mathbb{P}\,\big(\,\sum_{i=1}^{n}[m_{1} - Z_{i, \theta}]\geq nm_{1} - k\,\big)\\
    &\leq \exp{-\frac{3(nm_{1}-k)^{2}}{6nm_{1}(1-m_{1}) + 2(nm_{1}-k)}}\leq \exp{-\frac{3nm_{1}}{32}}
\end{align*}
by Bernstein's inequality. This leads to the expectation bound of $\gamma(V, k)^{2}$ as
\begin{align*}
    \mathbb{E}_{X}[\gamma(V, k)^{2}] &= \int_{0}^{2}\mathbb{P}[\gamma(V, k)^{2}\geq \theta^{2}]\dd \theta^{2}= 2\int_{0}^{\sqrt{2}}\theta \mathbb{P}[\gamma(V, k)\geq \theta]\dd \theta\\
    &\leq \big(\frac{2k}{C_{m, 1}n}\big)^{\frac{2}{d-1}}  + 2\int_{(\frac{2k}{C_{m, 1}n})^{\frac{1}{d-1}}}^{\sqrt{2}}\theta \exp{-\frac{3C_{m, 1}n\theta^{d-1}}{32}}\dd \theta\\
    &\leq \big(\frac{2k}{C_{m, 1}n}\big)^{\frac{2}{d-1}} + 2 \exp{-\frac{3k}{16}}\leq C_{1} \big(\frac{k}{n}\big)^{\frac{2}{d-1}}
\end{align*}
with constant $C_{1} = C_{1}(C_{m, 1})$. 
\par\noindent\textbf{Step 2.} Next is the case of Assumption \ref{assumption: unbounded covariates} with $V\in L(\rho, \tau)$, which leads
\begin{align*}
    S(\,V,\, \min\{\rho/\sqrt{2},\theta\}\,)\, \subset\, S(V, \theta)\,\backslash\, D(\tau).  
\end{align*}
Then, there exists constant $C_{m, 2} = C_{m, 2}(d, \tau_{0}, \rho_{0})>0$ such that $\mathbb{P}_{X}[S(V, \theta)]$ is greater or equal than $m_{2} = C_{m, 2}\tau\min\{\rho/\sqrt{2}, \theta\}^{d-1}$. Considering the i.i.d. random variables $\tilde{Z}_{1,\theta},\dots,\tilde{Z}_{n, \theta}$ having the underlying distribution ${\rm Bern}(m_{2})$, and assuming $m_{2}\geq 2k/n$, by the similar application on Bernstein's inequality
\begin{align*}
    \mathbb{P}_{X}[\,\vert \tilde{S}_{X}(V, \theta)\vert \leq k\,]\leq \mathbb{P}\,\big(\,\sum_{i=1}^{n}\tilde{Z}_{i, \theta}\leq k\,\big)\leq \exp{-\frac{3nm_{2}}{32}}.
\end{align*}
We separate the expectation of $\gamma(V, k)^{2}$ with two terms
\begin{align*}
    \mathbb{E}_{X}[\gamma(V, k)^{2}] &= 2\int_{0}^{\frac{\rho}{\sqrt{2}}}\theta \mathbb{P}[\gamma(V, k)\geq \theta]\dd \theta+ 2\int_{\frac{\rho}{\sqrt{2}}}^{\sqrt{2}}\theta \mathbb{P}[\gamma(V, k)\geq \theta]\dd \theta,
\end{align*}
where the bounds are derived as
\begin{align*}
    2\int_{0}^{\frac{\rho}{\sqrt{2}}}\theta \mathbb{P}[\gamma(V, k)\geq \theta]\dd \theta & \leq \big(\frac{2k}{C_{m, 2} n\tau}\big)^{\frac{2}{d-1}}  + 2\int_{(\frac{2k}{C_{m, 2} n\tau})^{\frac{1}{d-1}}}^{\sqrt{2}}\theta \exp{-\frac{3C_{m, 2}n\tau\theta^{d-1}}{32}}\dd \theta\\
    &\leq \big(\frac{2k}{C_{m, 2}n\tau}\big)^{\frac{2}{d-1}} + 2 \exp{-\frac{3k}{16}}\leq C_{2} \big(\frac{k}{n\tau}\big)^{\frac{2}{d-1}}
\end{align*}
and
\begin{align*}
    2\int_{\frac{\rho}{\sqrt{2}}}^{\sqrt{2}}\theta \mathbb{P}[\gamma(V, k)\geq \theta]\dd \theta &\leq 2\int_{\frac{\rho}{\sqrt{2}}}^{\sqrt{2}}\theta \exp{-\frac{3C_{m, 2}n\tau\theta^{d-1}}{32}}\dd \theta\\ 
    &\leq 2 \exp{-\frac{3C_{m, 2}n\tau\rho^{d-1}}{2^{\frac{d+9}{2}}}}\leq C_{3} \big(\frac{k}{n\tau}\big)^{\frac{2}{d-1}}
\end{align*}
with some constants $C_{2} = C_{2}(C_{m, 2})$ and $C_{3} = C_{3}(C_{m, 2})$.
\par\noindent\textbf{Step 3.} In the final case of $V\in \mathbb{S}^{d-1}\backslash L(\tau)$, set $\tilde{V}\in L(\tau)$ such that 
\begin{align*}
    \Vert V - \tilde{V}\Vert_{2}\leq C_{\tilde{X}}\tau^{\alpha} + \rho. 
\end{align*}
Letting $\tilde{\gamma} = \gamma(\tilde{V}, k)$, $\widebar{\gamma} = (\tilde{\gamma}^{2} + \Vert V-\tilde{V}\Vert_{2})^{1/2}$, and $U \in S(\tilde{V}, \tilde{\gamma})$, we have
\begin{align*}
    \langle V, U\rangle = \langle V - \tilde{V}, U\rangle + \langle \tilde{V}, U\rangle \geq 1 - \Vert V - \tilde{V} \Vert_{2} - \tilde{\gamma}^{2}, 
\end{align*}
which yields $S(\tilde{V}, \tilde{\gamma}) \subset S(V, \widebar{\gamma})$. By definition of $\gamma(V, k)$ and the result of the previous step, 
\begin{align*}
    \mathbb{E}_{X}[\,\gamma(V, k)^{2}\,]\leq \mathbb{E}_{X}[\,\widebar{\gamma}^{2}\,] \leq \Vert V - \tilde{V}\Vert_{2} + \mathbb{E}_{X}[\,\gamma(\tilde{V}, k)^{2}\,]\leq C_{4}((k/n\tau)^{\frac{2}{d-1}}+\tau^{\alpha}+\rho)
\end{align*}
holds with some constant $C_{4} = C_{4}(d, \tau_{0}, \rho_{0}, C_{\tilde{X}})$. The proof is thus complete.
\end{proof}

\begin{lemma}\label{lemma: vector}
Let $U\in \widebar{B}_{R}(0)$ and $V_{1}, V_{2}\in\mathbb{S}^{d-1}$ with the condition $\langle V_{1}, V_{2}\rangle\geq 1-\gamma^{2}$ for some $0\leq\gamma\leq\sqrt{2}$. Then, there exists a constant $C = C(R)$ such that
    \begin{align*}
        \vert \langle U, V_{2}\rangle - \langle U, V_{1}\rangle\langle V_{1}, V_{2}\rangle \vert\leq C\gamma. 
    \end{align*}
\end{lemma}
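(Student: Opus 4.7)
The plan is to rewrite the quantity so that $U$ is pulled out via Cauchy--Schwarz, leaving a purely geometric quantity in $V_1, V_2$ that is controlled by the near-alignment assumption $\langle V_1, V_2 \rangle \ge 1 - \gamma^2$.

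First, I would observe the identity
\begin{align*}
\langle U, V_2 \rangle - \langle U, V_1 \rangle \langle V_1, V_2 \rangle = \langle U,\, V_2 - \langle V_1, V_2 \rangle V_1 \rangle,
\end{align*}
and then apply Cauchy--Schwarz together with $\|U\|_2 \le R$ to obtain
\begin{align*}
|\langle U, V_2 \rangle - \langle U, V_1 \rangle \langle V_1, V_2 \rangle| \le R\, \| V_2 - \langle V_1, V_2 \rangle V_1 \|_2.
\end{align*}

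Next, I would compute the squared norm of the residual vector directly: expanding the inner product and using $\|V_1\|_2 = \|V_2\|_2 = 1$ yields
\begin{align*}
\| V_2 - \langle V_1, V_2 \rangle V_1 \|_2^2 = 1 - \langle V_1, V_2 \rangle^2 = (1 - \langle V_1, V_2 \rangle)(1 + \langle V_1, V_2 \rangle) \le 2(1 - \langle V_1, V_2 \rangle) \le 2\gamma^2,
\end{align*}
where the last step uses the hypothesis $\langle V_1, V_2 \rangle \ge 1 - \gamma^2$ and the trivial bound $\langle V_1, V_2 \rangle \le 1$.

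Combining the two displays gives the desired inequality with $C = \sqrt{2}R$. There is no real obstacle here; the only modest point is recognizing that $V_2 - \langle V_1, V_2 \rangle V_1$ is the component of $V_2$ orthogonal to $V_1$, whose squared length has the clean closed form $1 - \langle V_1, V_2 \rangle^2$, which factors conveniently to extract a single power of $(1 - \langle V_1, V_2 \rangle)$ and thus a single power of $\gamma$.
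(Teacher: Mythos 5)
Your proof is correct, and it takes a genuinely different (and cleaner) route than the paper. You pull $U$ out in one step, writing the target quantity as $\langle U,\, V_2 - \langle V_1, V_2\rangle V_1\rangle$ and applying Cauchy--Schwarz, so the whole estimate reduces to bounding the norm of the orthogonal component $V_2 - \langle V_1, V_2\rangle V_1$, which has the closed form $\|V_2 - \langle V_1,V_2\rangle V_1\|_2^2 = 1 - \langle V_1,V_2\rangle^2$. Factoring $1-\langle V_1,V_2\rangle^2 = (1-\langle V_1,V_2\rangle)(1+\langle V_1,V_2\rangle) \le 2\gamma^2$ finishes the argument with $C = \sqrt{2}R$. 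The paper instead splits the quantity into two pieces,
\begin{align*}
\langle U, V_{2}\rangle (1 - \langle V_{1}, V_{2}\rangle^{2}) \;+\; \langle V_{1}, V_{2}\rangle \bigl(\langle U, V_{2}\rangle \langle V_{1}, V_{2}\rangle - \langle U, V_{1}\rangle\bigr),
\end{align*}
and bounds the first by $2R\gamma^2$ and the second (after a further triangle-inequality split and Cauchy--Schwarz on $\langle U, V_2 - V_1\rangle$) by $\sqrt{2}R\gamma + R\gamma^2$, yielding a larger effective constant once $\gamma^2$ is absorbed into $\gamma$ using $\gamma \le \sqrt{2}$. Both proofs are correct; your single projection step is more economical, avoids the term-by-term bookkeeping, and delivers a sharper explicit constant.
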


\begin{proof}
By direct computation of the objective term,
\begin{align}
    \langle U, V_{2}\rangle - \langle U, V_{1}\rangle\langle V_{1}, V_{2}\rangle &=\langle U, V_{2}\rangle - \langle U, V_{2}\rangle\langle V_{1}, V_{2}\rangle^{2}+\langle U, V_{1}\rangle \langle V_{1}, V_{2}\rangle^{2}- \langle U, V_{1}\rangle\langle V_{1}, V_{2}\rangle\nonumber\\
    &= \langle U, V_{2}\rangle (1 - \langle V_{1}, V_{2}\rangle^{2})+\langle V_{1}, V_{2}\rangle (\langle U, V_{2}\rangle \langle V_{1}, V_{2}\rangle - \langle U, V_{1}\rangle).\label{eq: vector} 
\end{align}
For the first term in \eqref{eq: vector}, we have
\begin{align*}
    \vert 1 - \langle V_{1}, V_{2}\rangle^{2} \vert \leq 2\vert 1-\langle V_{1}, V_{2}\rangle \vert \leq 2\gamma^{2},
\end{align*} 
so that
\begin{align*}
    \vert \langle U, V_{2}\rangle (1-\langle V_{1}, V_{2}\rangle^{2})\vert \leq 2R\gamma^{2}.  
\end{align*}
Next, for the second term of \eqref{eq: vector}, 
\begin{align*}
    \langle U, V_{2} - V_{1}\rangle^{2}\leq \langle U, U\rangle \langle V_{2} - V_{1}, V_{2} - V_{1}\rangle \leq 2R^{2}\gamma^{2}
\end{align*}
holds by Cauchy-Schwarz inequality, which leads to the bound
\begin{align*}
    \vert \langle V_{1}, V_{2}\rangle (\langle U, V_{2}\rangle \langle V_{1}, V_{2}\rangle - \langle U, V_{1}\rangle)\vert &\leq \vert \langle U, V_{2}\rangle \langle V_{1}, V_{2}\rangle - \langle U, V_{1}\rangle\vert\\ 
    &\leq \vert \langle U, V_{2}\rangle \langle V_{1}, V_{2}\rangle - \langle U, V_{1}\rangle\langle V_{1}, V_{2}\rangle\vert +\vert \langle U, V_{1}\rangle \langle V_{1}, V_{2}\rangle - \langle U, V_{1}\rangle\vert \\
    &= \vert \langle U, V_{2}-V_{1}\rangle \langle V_{1}, V_{2}\rangle\vert +\vert \langle U, V_{1}\rangle (\langle V_{1}, V_{2}\rangle - 1)\vert\\
    &\leq \sqrt{2}R\gamma +R\gamma^{2}.
\end{align*}
Combining the bounds of the two terms completes the proof. 
\end{proof}

\bibliographystyle{apalike}
\bibliography{smooth}

\end{document}